\documentclass{article}

\usepackage[nonatbib,preprint]{neurips_2026}
\usepackage[imports,opt,theoremstyles]{commands}
\usepackage{xcolor}
\usepackage{booktabs}
\usepackage{wrapfig}

\newcommand{\Bc}{\mathcal B}
\newcommand{\Bs}{\mathcal B^*}
\newcommand{\dualnorm}[1]{\left\lVert #1 \right\rVert_*}
\newcommand{\nucnorm}[1]{\norm{#1}_{\operatorname{S_1}}}
\newcommand{\opnorm}[1]{\norm{#1}_{\operatorname{S_\infty}}}
\newcommand{\fnorm}[1]{\norm{#1}_{\operatorname{F}}}
\newcommand{\angles}[1]{\left\langle #1 \right\rangle}

\newcommand{\euclvar}{\sigma_F}

\newcommand{\nameeqref}[1]{\hyperref[#1]{(\nameref{#1})}}
\newcommand{\lsmooth}{\Cref{assum:lsmooth}}

\newcommand{\pBCM}{\Cref{assum:noise}}
\newcommand{\ourAlgName}{\textsc{Nssd-M}}
\newcommand{\ourAlg}{\hyperref[eq:snsdm]{\ourAlgName}}
\newcommand{\muon}{\textsc{Muon}}
\newcommand{\lmo}{\operatorname{lmo}}

\usepackage{subcaption}
\usepackage[sorting=ynt,
			natbib=true,
			style=authoryear-comp,
			maxcitenames=2,
			uniquelist=false,
			backend=biber]{biblatex}
\usepackage{changes}

\title{Free Heavy-Tailed Lunch for Muon: \\A Theoretical Justification of Empirical Success}

\newcommand{\affilfont}{\normalfont\mdseries\fontsize{8}{9.5}\selectfont}
\author{%
\begin{tabular}{@{}c@{}}
\phantom{a}\\
\textbf{Florian Hübler}\textsuperscript{1,2,\thanks{Corresponding author:
\href{mailto:florian.huebler@inf.ethz.ch}{florian.huebler@inf.ethz.ch}}}
\qquad
\textbf{Thomas Pethick}
\qquad
\textbf{Suvrit Sra}\textsuperscript{2,3}
\\[6mm]
{\affilfont \textsuperscript{1}Department of Computer Science, ETH Zurich, Switzerland}
\\
{\affilfont \textsuperscript{2}Department of Mathematics, Technical University of Munich, Germany}
\\
{\affilfont \textsuperscript{3}Munich Center for Machine Learning (MCML)}
\vspace*{-2mm}
\end{tabular}%
}

\date{}

\begin{document}

\maketitle

\begin{abstract}
Non-Euclidean optimisation methods with matrix-valued updates, such as \textsc{Muon} and \textsc{Scion}, have recently shown strong empirical performance for training Transformer models, yet their theoretical advantages over Euclidean methods remain poorly understood. We address this gap in the heavy-tailed non-convex regime, where stochastic gradients have bounded $p$-th central moments, $p \in (1,2]$. We show that certain non-Euclidean methods achieve optimal sample complexity under stronger stationarity measures, while Euclidean methods incur additional dimension-dependent costs. As a consequence, for $m \times n$ matrices, \textsc{Muon} finds an $\varepsilon$-stationary point in nuclear norm within $\mathcal{O}\left(\min\{m, n\} \frac{\Delta_1 L}{\varepsilon^2} \left(\frac \sigma \varepsilon \right)^{\frac p {p-1}}\right)$ samples, absorbing heavy-tailed noise without extra dimension dependence, unlike Euclidean methods. We further prove this sample complexity, including its dimension dependence, is optimal for all first-order methods under nuclear-norm stationarity. Experiments on large language models support our theory. Surprisingly, our results suggest that other Schatten geometries beyond the spectral geometry of \textsc{Muon} can perform competitively in certain settings.
\end{abstract}

\section{Introduction}
\label{sec:intro}

We consider the unconstrained stochastic optimisation problem
\begin{equation}\label{eq:stoch_minimisation}
    \min_{x \in \Bc} \set{F(x) \coloneqq \Exp{f(x, \xi)}},
\end{equation}
where $\xi$ is distributed according to some unknown distribution and $\Bc$ is a Banach space such as $\R^d$ or $\R^{m \times n}$. This setting has been the focus of a large body of work in optimisation due to its prevalence in machine learning and data-driven optimisation \citep{Bottou2018}.

While \textsc{Adam} \citep{Adam2014Kingman} has been the de-facto standard optimisation algorithm for training Transformer-based models for the last decade, a recent line of literature has proposed a new class of methods based on steepest descent in general normed spaces that are empirically competitive with \textsc{Adam} and its variants. The general form of these algorithms is given by the normalised steepest descent \citep{ConvexOptimization2004boyd} update
\begin{equation}\label{eq:intro_nssd}
    \xtp \gets \xt + \eta \argmin_{\norm{d} \leq 1} \angles{\gt, d},
\end{equation}
where $\ssi > 0$ is the stepsize, $\gt$ some gradient estimator and $\norm \cdot$ a norm on $\Bc$ \citep{SCION2025pethick}. The most prominent example is \muon\ \citep{MuonOptimizerHidden2024jordan,OrthogonalisingGradientsSpeed2022tuddenham}, which uses (1) the spectral norm $\norm \cdot = \opnorm \cdot$, (2) the momentum gradient estimator, and (3) Newton-Schulz iterations \citep{NewtonSchulz1970Kovarik} to estimate the $\argmin$. This variant achieves impressive results on the modded nanoGPT speedrun \citep{NanoGPTSpeedrun2024Jordan} and was used to train recent trillion parameter language models such as Kimi K2 \citep{team2025kimi} and DeepSeek V4 \citep{deepseekai2026deepseekv4}.

From a theoretical perspective, recent works have established convergence of such algorithms in the non-convex setting. These guarantees are typically of the form
\begin{align}\label{eq:dimension_dep_guarantee}
    \Exp{\dualnorm{\nF \pare{{\xli}}}} \leq \eps \quad \text{after} \quad T = \Oc \pare{\rho^2 \cdot \frac{\Dz L \sigma^2}{\eps^4}} \quad \text{samples,}
\end{align}
where $F$ is $L$-smooth and the stochastic gradients have bounded variance $\sigma^2$, $\dualnorm \cdot$ is the dual norm and $\rho \coloneqq \max_{v \in \Bs} \nicefrac{\dualnorm{v}}{\norm{v}_{*,2}}$ is the norm-equivalence constant between the dual norm and a Hilbert norm on $\Bs$ \citep[e.g.,][]{ConvergenceAnalysisMuon2025shen,SCION2025pethick}. Importantly, \eqref{eq:dimension_dep_guarantee} guarantees $\Exp{\dualnorm{\nF \pare{\xli}}} \leq \eps$, which we denote $\eps$-$\dualnorm \cdot$-stationary point. For example, for \muon\ the dual norm is the Schatten-1 norm $\nucnorm \cdot$ and thus a strictly stronger measure than the Frobenius norm typically used in Euclidean analyses. While informative, these guarantees face two difficulties in capturing the potential benefits of non-Euclidean methods in stochastic settings.

First, the appearance of the norm-equivalence constant $\rho$ can make the bound effectively dimension-dependent, thereby obscuring potential advantages over Euclidean methods.
This term arises from the treatment of the gradient noise. Indeed, the analysis must control terms of the form $\E[\dualnorm{\sum \pare{\nFxt - \nfxt}}]$. Such bounds are straightforward in Hilbert spaces, where the inner-product structure is available, but much harder for general norms. Existing analyses therefore use the norm equivalence $\dualnorm \cdot \leq \rho \norm{\cdot}_{*,2}$, before controlling the noise in the Euclidean geometry. In particular, for \muon's nuclear norm on $\R^{m \times n}$, we get $\rho = \sqrt{\min\set{m, n}}$.

Second, these results rely on the bounded variance assumption, which can be overly restrictive. Indeed, empirical evidence from image classification \citep{TailIndexAnalysisStochastic2019simsekli,RevisitingNoiseModel2024Battash}, language modelling \citep{WhyAreAdaptive2020zhang,ahn2024linear}, and reinforcement learning (RL) \citep{ProximalPolicyOptimizations2021garg} suggests that stochastic gradients can exhibit heavy-tailed behaviour. 
These observations suggest that heavy-tailed noise provides a more faithful model of stochastic gradients in modern machine learning. 
Consequently, if one seeks to understand the empirical advantage of non-Euclidean optimisation methods, it is natural to analyse them in this regime. These two limitations naturally raise the following question:
\begin{center}
\emph{Can non-Euclidean methods such as \muon\ admit provable improvements over Euclidean methods in stochastic regimes, particularly under heavy-tailed noise?}
\end{center}

To study this question, we depart from the usual Euclidean noise assumption and instead make the heavy-tailed assumption directly in the native geometry: for some tail index $p \in (1,2]$ we assume
\begin{equation*}
    \Exp{\dualnorm{\nf \pare{x, \xi} - \nFx}^p} \leq \sigma^p.
\end{equation*}
This choice is the root of the main technical challenges, but also what enables sharper, geometry-aware guarantees.

\paragraph{Contributions.}
In this work we show that non-Euclidean methods can achieve strict improvements over their Euclidean counterparts in the heavy-tailed setting.

\begin{enumerate}[label=(\roman*)]
    \item We show that normalised steepest descent methods with momentum can achieve optimal rates for strictly stronger convergence measures. More precisely, we provide the optimal sample complexity bound
    $
        \Oc \pare{\frac{\Dz L}{\eps^2} \pare{\frac \sigma \eps}^{\frac{p}{p-1}}}
    $
    for reaching an $\eps$-$\dualnorm \cdot$-stationary point, whenever the dual norm $\dualnorm \cdot$ is $p$-uniformly smooth (see \Cref{def:constants}). Moreover, we provide this guarantee under the weak $(L_0, L_1)$-smoothness assumption.
    \item Based on this result, we obtain improved convergence guarantees for \muon, whose dual norm $\nucnorm \cdot$ is not uniformly smooth. We show that \muon\ achieves the sample complexity
    \begin{equation*}
        \Oc \pare{\min\set{m,n} \cdot \frac{\Dz L}{\eps^2} \pare{\frac{\varsym}{\eps}}^{\frac{p}{p-1}}} \qquad \text{vs.} \qquad
        \Oc \pare{\min\set{m,n}^{\frac{3p-2}{2(p-1)}} \cdot \frac{\Dz L}{\eps^2} \pare{\frac{\varsym}{\eps}}^{\frac{p}{p-1}}}
    \end{equation*}
    obtained by Euclidean algorithms when corrected for the stationarity measure. This shows that \muon\ can absorb heavy-tailed noise essentially without extra dimensional cost, whereas classical Euclidean methods can match the same stationarity measure only at the price of a strictly worse, $p$-dependent dimension factor.
    \item We provide a first-order sample complexity lower-bound of
    \begin{equation*}
        \Omega \pare{\min\set{m,n} \cdot \frac{\Dz L}{\eps^2} \pare{\frac{\varsym}{\eps}}^{\frac{p}{p-1}}}
    \end{equation*}
    for $\nucnorm \cdot$-stationarity. In particular, \muon\ is first-order optimal and there cannot exist a dimension-free guarantee for $\nucnorm \cdot$-stationarity.
\end{enumerate}
As a side result, we provide tools from functional analysis to control the gradient noise for general norms, sidestepping the usual reduction to Euclidean martingale bounds through norm equivalence. These tools may be of independent interest for the analysis of general-norm algorithms beyond the heavy-tailed setting.

\section{Preliminaries}
\label{sec:prelims}

\textbf{Notation.} Throughout, $(\Bc, \norm{\cdot})$ denotes a Banach space with dual $(\Bs, \dualnorm{\cdot})$. We denote the duality pairing by $\angles{x^*, x} \coloneqq x^*(x)$ for $x \in \Bc, x^* \in \Bs$. To keep consistency with the typical notation in $\R^d$ we abuse notation\footnote{Note that the gradient is in general not well defined in Banach spaces, as it is defined by the Riesz representation of $DF(x)$, which only exists in Hilbert spaces.} by writing $\nabla F(x)$ for the Frechet derivative $D F(x) \in \Bs$ and $\angles{\nabla F(x), d}$ for the duality pairing between the Frechet derivative and $d \in \Bc$.

\textbf{Algorithm.} We study the stochastic normalised steepest descent method with momentum, 
\begin{align}\tag{\ourAlgName}\begin{split}\label{eq:snsdm}
    m_t &\gets \momt m_{t-1} + (1-\momt) \nfxt, \\
    x_{t+1} &\gets x_t + \sst \argmin_{\norm{d} \leq 1} \angles{m_t, d},
\end{split}\end{align}
where $\iterationxi \fin, \iterationxi 2, \dots \iid \xi$ are independent copies of $\xi$, $\nfxt$ is a stochastic gradient oracle of $\nabla F(x_t), \beta \in [0,1)$ is the momentum parameter and $m_0 \coloneqq \nabla f(\xfin, \iterationxi \fin)$. When choosing $(\Bc, \norm{\cdot}) = (\R^{m \times n}, \opnorm \cdot)$ this corresponds to an idealised version of \muon\ with an exact lmo, and  $(\Bc, \norm{\cdot}) = (\R^d, \norm \cdot_{\ell_\infty})$ to \textsc{Signum}. We consider the following standard assumptions.

\begin{assum}
\label{assum:lower}
There exists $\Dz < \infty$ such that $F(x_1) - \inf_{x \in \Bc} F(x) \leq \Dz$.
\end{assum}

For ease of exposition, we present the results for the case of $L$-smoothness as defined below. Our results readily also hold for the weaker notion of $(L_0, L_1)$-smoothness and can be found in \Cref{sec:app.proofs}.

\begin{assum}
\label{assum:lsmooth}
There exists $L \geq 0$ such that for all $x,y \in \Bc$, $\dualnorm{\nabla F(x) - \nabla F(y)} \leq L \norm{x-y}$.
\end{assum}

Finally, as mentioned in the introduction, we consider the heavy-tailed setting where the stochastic gradient oracle has bounded $p$-th central moment for some $p \in (1, 2]$.

\begin{assum}
\label{assum:noise}
There exists $p \in (1, 2], \varsym \geq 0$ such that the stochastic gradient oracle is unbiased and has bounded $p$-th central moment, i.e., for all $x \in \Bc$,
\begin{align*}
    \Exp{\nf\pare{x, \xi}} = \nFx, \qquad \text{and} \qquad
    \Exp{\dualnorm{\nf\pare{x, \xi} - \nFx}^p} \leq \varsym^p.
\end{align*}
\end{assum}

\subsection{Moment Bounds for General Norms}

We first summarise the necessary technical tools to control the gradient approximation error for general norms, sidestepping the usual reduction to Euclidean martingale bounds through norm equivalence. Therefore we first recall the following key geometric definitions of general norms.

\begin{definition}[{\cite[Equation (2.7)]{SharpUniformConvexity1994ball}}]\label{def:constants}
    Let $(X, \norm \cdot)$ be a Banach space.
    \begin{enumerate}[label=\roman*)]
        \item The \emph{$p$-smoothness constant} of $(X, \norm \cdot)$ is defined as
        \begin{equation*}
            S_p(X) \coloneqq \inf\set{S \geq 1 \suchthat \fas x,y \in X\colon \norm{x+y}^p + \norm{x-y}^p \leq 2\norm{x}^p + 2 S^p \norm{y}^p},
        \end{equation*}
        and we call $(X, \norm \cdot)$ $p$-uniformly smooth if $S_p(X) < \infty$.
        \item The \emph{$q$-convexity constant} of $(X, \norm \cdot)$ is defined as
        \begin{equation*}
            K_q(X) \coloneqq \inf\set{K \geq 1 \suchthat \fas x,y \in X\colon 2\norm{x}^q + \nicefrac 2 {K^q}\norm y^q \leq \norm{x+y}^q + \norm{x-y}^q },
        \end{equation*}
        and we call $(X, \norm \cdot)$ $q$-uniformly convex if $K_q(X) < \infty$.
    \end{enumerate}
\end{definition}

The following strong duality holds between the convexity and smoothness constants.

\begin{lemma}[{\cite[Lemma 5]{SharpUniformConvexity1994ball}}]\label{lem:convexity_smoothness_duality}
    Let $(X, \norm \cdot)$ be a Banach space and $1 < p \leq 2 \leq q < \infty$ conjugate exponents, i.e., $\nicefrac 1 p + \nicefrac 1 q = 1$. Then $S_p(X^*) = K_q(X)$.
\end{lemma}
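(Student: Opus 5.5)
The plan is a duality argument on the product space $X \times X$. Convexity and smoothness inequalities will be recast as comparisons between two norms on $X\times X$. Taking duals reverses such a comparison. The $q$-convexity inequality with constant $K$ on $X$ should then become exactly the $p$-smoothness inequality with constant $S=K$ on $X^*$, and conversely. The two infima coincide because the set of admissible constants is the same.

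\textbf{Step 1 (reformulate).} Fix $K\ge 1$ and define on $X\times X$ the norms
\begin{equation*}
N_1(x,y) \coloneqq \bigl(\norm{x}^q + K^{-q}\norm{y}^q\bigr)^{1/q}, \qquad N_2(x,y) \coloneqq \Bigl(\tfrac12\bigl(\norm{x+y}^q+\norm{x-y}^q\bigr)\Bigr)^{1/q}.
\end{equation*}
The $q$-convexity inequality with constant $K$ is precisely $N_1 \le N_2$ on $X\times X$. Likewise, for $f,g\in X^*$ define
\begin{equation*}
M_1(f,g) \coloneqq \bigl(\dualnorm{f}^p + K^{p}\dualnorm{g}^p\bigr)^{1/p}, \qquad M_2(f,g) \coloneqq \Bigl(\tfrac12\bigl(\dualnorm{f+g}^p+\dualnorm{f-g}^p\bigr)\Bigr)^{1/p}.
\end{equation*}
The $p$-smoothness inequality with $S=K$ is $M_2 \le M_1$ on $X^*\times X^*$.

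\textbf{Step 2 (compute dual norms).} The dual of $X\times X$ is identified with $X^*\times X^*$ via $\angles{(f,g),(x,y)} = f(x)+g(y)$. I will use two standard facts: the dual of a weighted $\ell_q$-sum of Banach spaces is the correspondingly weighted $\ell_p$-sum of their duals, and the dual of $c\cdot\norm{\cdot}$ is $c^{-1}\dualnorm{\cdot}$.

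The first fact gives $N_1^* = M_1$ immediately. For $N_2$, write $N_2 = \mathcal M\circ T$ with $T(x,y) = (x+y,x-y)$ and $\mathcal M(u,v) = 2^{-1/q}(\norm u^q+\norm v^q)^{1/q}$. Then
\begin{equation*}
\mathcal M^*(\varphi,\psi) = 2^{1/q}\bigl(\dualnorm\varphi^p + \dualnorm\psi^p\bigr)^{1/p}.
\end{equation*}
Since $T$ is invertible, $N_2^*(f,g) = \mathcal M^*\bigl((T^*)^{-1}(f,g)\bigr)$. We have $T^*(\varphi,\psi) = (\varphi+\psi,\varphi-\psi)$, so $(T^*)^{-1}(f,g) = \bigl(\tfrac{f+g}2,\tfrac{f-g}2\bigr)$. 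Using $1/q = 1-1/p$, this gives
\begin{equation*}
N_2^*(f,g)^p = 2^{p/q}2^{-p}\bigl(\dualnorm{f+g}^p+\dualnorm{f-g}^p\bigr) = M_2(f,g)^p.
\end{equation*}

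\textbf{Step 3 (transfer inequalities).} For two norms on the same space, $N_1\le N_2$ implies $N_2^* \le N_1^*$, since the unit ball of $N_2$ sits inside that of $N_1$. Hence $q$-convexity of $X$ with constant $K$ yields $M_2\le M_1$, i.e.\ $p$-smoothness of $X^*$ with constant $K$. This proves $S_p(X^*) \le K_q(X)$.

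For the converse, suppose $M_2\le M_1$ on $X^*\times X^*$. Dualising again gives $M_1^* \le M_2^*$ on $X^{**}\times X^{**}$. By Step 2 applied in reverse, these are the $N_1$, $N_2$-type norms built from the norm of $X^{**}$. Restricting to the canonical isometric copy of $X\times X$ recovers $N_1\le N_2$. Equivalently, by Hahn--Banach $N_i(z) = \sup\{\angles{w,z} : N_i^*(w)\le1\}$, so no reflexivity is needed. This gives $K_q(X)\le S_p(X^*)$, including the case where one side is infinite. The constraint $S,K\ge1$ is the same on both sides.

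\textbf{Main obstacle.} The only delicate point is the dual-norm computation in Step 2. One must get the normalising factors $2^{\pm1/q}$ exactly right so that they cancel to the factor $\tfrac12$ in $M_2$. One must also justify the identification of the dual of the $\ell_q$-sum of $X$ with the $\ell_p$-sum of $X^*$, including attainment. This follows from Hölder's inequality together with choosing norming functionals for each coordinate.
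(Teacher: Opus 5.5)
Your proof is correct and is essentially the standard duality argument behind this lemma. The paper gives no proof of its own and defers to Ball--Carlen--Lieb (Lemma~5), which rests on the same mechanism: H\"older's inequality together with the substitution $(x,y)\mapsto(x+y,x-y)$ turns $q$-convexity on $X\times X$ and $p$-smoothness on $X^*\times X^*$ into dual norm comparisons, and Hahn--Banach gives the reverse direction without reflexivity. One small correction to your ``attainment'' remark in Step~2: you only need approximately norming vectors (the supremum), not attained norms. This matters, because norming elements of $X$ need not exist when $X$ is not reflexive.
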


The above duality relationship allows us to verify the smoothness assumption on the dual space by checking a convexity property of the primal space, which is often more natural. Next let us get some intuition of the above definition for the special cases of $\R^d$ and $\R^{m \times n}$

\begin{example}[{\cite{NonCommutativeClarksonInequalities2002hirzallah}}]
    \label{example:schatten_p}
    Consider the $\ell_r$ space $\pare{X, \norm \cdot} = (\mathbb{R}^d, \norm{\cdot}_r)$ or the Schatten-$r$ space $(X, \norm \cdot) = (\R^{m \times n}, \norm \cdot_{S_r})$. In both cases the following holds.
    \begin{enumerate}[label=\roman*)]
        \item If $1 < r \le 2$, then $X$ is $r$-uniformly smooth with $S_r(X)=1$.
        \item If $2 \le r < \infty$, then $X$ is $r$-uniformly convex with $K_r(X) = 1$.
    \end{enumerate}
\end{example}

Finally we provide the main technical tool which will allow us to control the gradient approximation error for general norms, sidestepping the usual reduction to Euclidean martingale bounds through norm equivalence.

\begin{theorem}[{\cite[c.f.,][Theorem 3.2]{StrongMartingaleType2005wenzel}}]
\label{thm:banach_martingale}
Every martingale difference sequence $Y_1, \dots, Y_n$ in $\Bs$ satisfies
\begin{equation*}
    \Exp{\dualnorm{\sum_{t=1}^n Y_t}^p}
    \leq
    S_p(\Bs)^p \cdot \sum_{t=1}^n \Exp{\dualnorm{Y_t}^p}.
\end{equation*}
\end{theorem}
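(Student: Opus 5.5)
The plan is induction on $n$, applying the defining inequality of $S_p(\Bs)$ to a single symmetrisation at each step. Fix any $S > S_p(\Bs)$; if no finite $S$ exists the claim is trivial. For all $x,y \in \Bs$,
\begin{equation*}
    \dualnorm{x+y}^p + \dualnorm{x-y}^p \leq 2\dualnorm{x}^p + 2S^p\dualnorm{y}^p .
\end{equation*}
Write $M_k \coloneqq \sum_{t \leq k} Y_t$ and let $\mathcal F_{k}$ be the filtration, so that $\Exp{Y_{k+1} \mid \mathcal F_k} = 0$ and $M_k$ is $\mathcal F_k$-measurable. The goal is the one-step bound
\begin{equation*}
    \Exp{\dualnorm{M_k + Y_{k+1}}^p} \leq \Exp{\dualnorm{M_k}^p} + S^p\, \Exp{\dualnorm{Y_{k+1}}^p}.
\end{equation*}
Summing these increments and letting $S \downarrow S_p(\Bs)$ then gives the statement.

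The obstacle is that the two-point inequality controls $\dualnorm{x+y}^p + \dualnorm{x-y}^p$, not $\dualnorm{x+y}^p$ alone. For a symmetric (e.g.\ Rademacher-type) increment the two are equal in law, and the bound is immediate. For a general martingale difference, which need not be conditionally symmetric, this is exactly where Wenzel's strong martingale type argument is needed.

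I would handle this with the standard route via a convex functional. First, use the two-point inequality to show that $\phi(x) = \dualnorm{x}^p$ satisfies a uniform smoothness bound: for every $x$ there is a functional $J(x)$, a subgradient of $\phi$ at $x$, with
\begin{equation*}
    \phi(x+y) \leq \phi(x) + \angles{J(x), y} + C S^p \phi(y).
\end{equation*}
This is the classical equivalence between the modulus of smoothness and the power-type Taylor bound (Pisier/Ball--Carlen--Lieb), and I would prove it by iterating the midpoint inequality dyadically.

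Next, set $x = M_k$ and $y = Y_{k+1}$ and take conditional expectations. The linear term vanishes because $J(M_k)$ is $\mathcal F_k$-measurable and $Y_{k+1}$ has zero conditional mean. This yields the recursion with constant $C S^p$.

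The delicate point is getting $C = 1$, i.e.\ the exact constant $S_p(\Bs)^p$. Wenzel obtains it through a sharper analysis: he works with an equivalent renorming, or uses that the best constant in the martingale inequality equals the smoothness constant of the two-point inequality. Since the statement is cited (``c.f.''), I would invoke Wenzel's Theorem 3.2 for the sharp constant and only verify that our $S_p$ matches his normalisation. If that verification failed, I would settle for a universal multiple of $S_p(\Bs)^p$, which suffices for all downstream rates.
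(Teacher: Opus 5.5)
Like the paper, which offers only the citation here, you rest the sharp constant on Wenzel's Theorem 3.2. That step cannot be completed. With the paper's definition of $S_p$, the inequality with constant $S_p(\Bs)^p$ is false for general martingale differences, so no check of normalisations will rescue it.

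Take $\Bs=\R$ and $p=3/2$. Here $S_{3/2}(\R)=1$: this is the case $d=1$ of the paper's $\ell_r$ example, or directly $\tfrac12(\abs{x+y}^p+\abs{x-y}^p)\le (x^2+y^2)^{p/2}\le\abs{x}^p+\abs{y}^p$. Let $\varepsilon$ be a Rademacher sign and $Z$ independent of it with $\Pr(Z=1)=\tfrac{9}{10}$, $\Pr(Z=-9)=\tfrac{1}{10}$. Set $Y_1=\varepsilon$ and $Y_2=\varepsilon Z$. Then $\Exp{Y_2\mid Y_1}=\varepsilon\,\Exp{Z}=0$, but
\[
\Exp{\abs{Y_1+Y_2}^{3/2}}=\Exp{\abs{1+Z}^{3/2}}=\tfrac{9}{10}\,2^{3/2}+\tfrac{1}{10}\,8^{3/2}=3.4\sqrt{2}\approx 4.81
>4.6=1+\tfrac{9}{10}+\tfrac{27}{10}=\Exp{\abs{Y_1}^{3/2}}+\Exp{\abs{Y_2}^{3/2}}.
\]
This is exactly the obstruction you identified. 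The two-point inequality yields constant $S^p$ only for conditionally symmetric increments (e.g.\ Walsh--Paley martingales). The gradient noise $\nabla f(x_t,\xi_t)-\nabla F(x_t)$ in the paper has no such symmetry.

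Your fallback is therefore the correct statement, and it is easier than you plan: no dyadic iteration is needed. Let $J(x)$ be any subgradient of $\phi=\dualnorm{\cdot}^p$ at $x$, and set $\psi(y)\coloneqq\phi(x+y)-\phi(x)-\langle J(x),y\rangle$. By convexity $\psi\ge 0$, while $\psi(y)+\psi(-y)=\phi(x+y)+\phi(x-y)-2\phi(x)\le 2S^p\phi(y)$. Hence $\psi(y)\le 2S^p\phi(y)$, i.e.\ $C=2$.

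To finish, choose $J$ measurably, which is harmless in the finite-dimensional spaces the paper uses. Since $\norm{J(x)}=p\dualnorm{x}^{p-1}$, H\"older makes $\langle J(M_k),Y_{k+1}\rangle$ integrable, and conditioning kills it. Induction then gives $\Exp{\dualnorm{\sum_t Y_t}^p}\le 2\,S_p(\Bs)^p\sum_t\Exp{\dualnorm{Y_t}^p}$.

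In the paper this replaces $K$ (resp.\ $S$) by $2^{1/p}K$ in \Cref{lem:mom:deviation_bound} and everything downstream. Only absolute constants change; the rates and the dimension dependence are unaffected.
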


We note that the above inequality is vacuous if $\Bs$ is not $p$-uniformly smooth, i.e., if $S_p(\Bs) = \infty$.
\section{Main Result}
\label{sec:main}

In this section we present our theoretical results. In \Cref{sec:main.general_norms} we provide convergence guarantees for \ourAlg\ for uniformly convex norms under heavy-tailed noise. In \Cref{sec:main.muon} we extend these results to the case of \muon. Finally, in \Cref{sec:main.lb} we provide a lower bound showing that the obtained dimension dependence is first-order optimal for nuclear-stationarity guarantees.

\subsection{Guarantees for Uniformly Convex Norms}
\label{sec:main.general_norms}

We are now ready to prove the convergence guarantee for uniformly convex norms under heavy-tailed noise. For ease of exposition we will present it under the standard $L$-smoothness assumption, extended results for $(L_0, L_1)$-smoothness and the corresponding proofs can be found in \Cref{sec:app.upper_bounds.general}.

\begin{theorem}[Uniformly-Convex Norm Guarantee]\label{thm:main_conv}
    Suppose that \Cref{assum:lower,assum:lsmooth,assum:noise} hold. Furthermore let $q = \nicefrac p {(p-1)}$ and assume that $(\Bc, \norm \cdot)$ is $q$-uniformly convex with $q$-convexity constant $K$. Then the iterates generated by \ourAlg\ with parameters
    \begin{align*}
        \sst &\equiv \sqrt{\frac{\Dz (1 - \momentum)}{L T}}, \qquad
        \momt \equiv \momentum = 1 - \min\set{1, \max\set{T^{-\frac p {2p-1}}, \pare{\frac{\Dz L}{\varsym^2 K^2 T}}^{\frac p {3p-2}}}}
    \end{align*}
    satisfy
    \begin{equation*}
        \frac 1 \li \iSum \Exp{\dualnorm{\nFxt}}
        \leq 9 \sqrt{\frac{\Dz L} \li} 
        + 14 \pare{\frac{\Dz L } \li \pare{\varsym K}^{\frac p {p-1}}}^{\frac {p-1}{3p-2}}
        + 10 \frac{\varsym K}{\li^{\frac{p-1}{2p-1}}}.
    \end{equation*}
\end{theorem}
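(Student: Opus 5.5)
We follow the standard template for normalised momentum methods (as in Cutkosky--Mehta) and replace the Euclidean noise control with \Cref{thm:banach_martingale}.

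\textbf{Step 1: descent inequality.} Write $d_t = \argmin_{\norm d\le 1}\angles{m_t,d}$, so that $\angles{m_t,d_t} = -\dualnorm{m_t}$. Set $\epsilon_t \coloneqq m_t - \nFxt$. Smoothness (\lsmooth) gives
\begin{equation*}
F(x_{t+1}) \le F(x_t) + \eta\angles{\nFxt, d_t} + \tfrac{L}{2}\eta^2 .
\end{equation*}
Since $\angles{\nFxt,d_t} = \angles{m_t,d_t} - \angles{\epsilon_t,d_t} \le -\dualnorm{m_t} + \dualnorm{\epsilon_t} \le -\dualnorm{\nFxt} + 2\dualnorm{\epsilon_t}$, we can telescope and use \Cref{assum:lower} to obtain
\begin{equation*}
\frac1T\sum_t \dualnorm{\nFxt} \le \frac{\Dz}{\eta T} + \frac{L\eta}{2} + \frac2T\sum_t \dualnorm{\epsilon_t}.
\end{equation*}

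\textbf{Step 2: error decomposition.} Unroll the momentum recursion:
\begin{equation*}
\epsilon_t = \beta\epsilon_{t-1} + \beta\bigl(\nabla F(x_{t-1})-\nFxt\bigr) + (1-\beta)\bigl(\nfxt - \nFxt\bigr),
\end{equation*}
so that
\begin{equation*}
\epsilon_t = \beta^{t-1}\epsilon_1 + \sum_{s=2}^{t}\beta^{t-s+1}\bigl(\nabla F(x_{s-1})-\nabla F(x_s)\bigr) + (1-\beta)\sum_{s=2}^t\beta^{t-s}Z_s,
\end{equation*}
where $Z_s = \nabla f(x_s,\xi_s)-\nabla F(x_s)$. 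The three terms are handled separately.
\begin{itemize}
\item The drift term is bounded by $L\eta\sum_s\beta^{t-s+1}\le L\eta\beta/(1-\beta)$.
\item The initial term satisfies $\E\dualnorm{\epsilon_1}\le\sigma$ by \pBCM\ and Jensen, and it sums to at most $\sigma/(1-\beta)$ over $t$.
\item For the noise term, the sequence $(\beta^{t-s}Z_s)_s$ is a martingale difference sequence in $\Bs$ with respect to the natural filtration, since $x_s$ is determined by $\xi_1,\dots,\xi_{s-1}$. By \Cref{lem:convexity_smoothness_duality}, $S_p(\Bs)=K_q(\Bc)=K$. Jensen and \Cref{thm:banach_martingale} then give
\begin{equation*}
\E\dualnorm{\textstyle\sum_s \beta^{t-s}Z_s} \le K\Bigl(\sum_s\beta^{p(t-s)}\sigma^p\Bigr)^{1/p} \le K\sigma(1-\beta^p)^{-1/p} \le K\sigma(1-\beta)^{-1/p}.
\end{equation*}
Multiplied by $(1-\beta)$, this contributes $K\sigma(1-\beta)^{(p-1)/p}$.
\end{itemize}

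\textbf{Step 3: tuning.} Combining, we get
\begin{equation*}
\frac1T\sum_t\E\dualnorm{\nFxt} \lesssim \frac{\Dz}{\eta T} + \frac{L\eta}{1-\beta} + \frac{\sigma}{T(1-\beta)} + K\sigma(1-\beta)^{1-1/p}.
\end{equation*}
With $\eta=\sqrt{\Dz(1-\beta)/(LT)}$, the first two terms give $\sqrt{\Dz L/((1-\beta)T)}$. Write $\alpha=1-\beta$ and balance $\sqrt{\Dz L/(\alpha T)}$ against $K\sigma\alpha^{(p-1)/p}$. This yields $\alpha=(\Dz L/(\sigma^2K^2T))^{p/(3p-2)}$ and the middle rate
\begin{equation*}
\Bigl(\tfrac{\Dz L}{T}(\sigma K)^{p/(p-1)}\Bigr)^{(p-1)/(3p-2)}.
\end{equation*}
Balancing the initialisation term $\sigma/(\alpha T)$ against $K\sigma\alpha^{(p-1)/p}$ gives $\alpha=T^{-p/(2p-1)}$ and the rate $\sigma K T^{-(p-1)/(2p-1)}$; here we use $K\ge1$. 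Taking the max of the two choices, capped at $1$, produces the stated $\beta$.

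The main bookkeeping obstacle is the case analysis over which branch of $\min\{1,\max\{\cdot\}\}$ is active. In each case, the non-dominant terms must be shown to be bounded by one of the three displayed terms, and this is what produces the explicit constants $9$, $14$ and $10$. In particular, when $\alpha=1$ we have $\eta=\sqrt{\Dz/(LT)}$ and the bound reduces to $\sqrt{\Dz L/T}+\sigma K$. The term $\sigma K$ must then be compared with the others using that $\alpha=1$ forces the quantity $\Dz L/(\sigma^2K^2T)$ to be at least $1$.
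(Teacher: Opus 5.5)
Your proposal is correct and is essentially the paper's argument: the paper proves this as the $L_1=0$ case of its $(L_0,L_1)$ theorem, using the same descent inequality with the $2\|\nabla F(x_t)-m_t\|_*$ error term, the same unrolled momentum decomposition, and \Cref{thm:banach_martingale} with $S_p(\Bs)=K$ from \Cref{lem:convexity_smoothness_duality}. The differences are cosmetic: the paper keeps the initial error inside the martingale sum (you separate it, which drops a factor $K$ there), and it avoids your branch-by-branch case analysis by using $1/\sqrt{1-\beta}\le 1+u^{-1/2}$, $1-\beta\ge T^{-p/(2p-1)}$ and $(1-\beta)^{(p-1)/p}\le u^{(p-1)/p}+T^{-(p-1)/(2p-1)}$ directly, with $u=(\Delta_1L/(\sigma^2K^2T))^{p/(3p-2)}$; this gives constants no larger than $9,14,10$ (and note that $1-\beta=1$ can also arise from $T=1$, not only from $u\ge 1$).
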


This shows that \ourAlg\ achieves the optimal heavy-tailed sample complexity in arbitrary, possibly infinite-dimensional Banach spaces. Importantly, the stationary measure $\dualnorm \cdot$ can in general be stronger than in the Euclidean case. This becomes particularly transparent for Schatten and $\ell$ norms.

\begin{corollary}\label{cor:main_conv_Schatten_ell}
    Let $(\Bc,\norm{\cdot}) = (\R^{m \times n}, \norm{\cdot}_{S_r})$ or $(\R^d, \norm{\cdot}_{\ell_r})$, where $r \leq \nicefrac p {(p-1)}$. Then, in the setting of \Cref{thm:main_conv}, \ourAlg\ satisfies
    \begin{equation}\label{eq:lmo_sample_guarantee}
        \Exp{\dualnorm{\nF \pare{\iterationx \tau}}} \leq \eps 
        \quad \text{after} \quad
        T = \Oc \pare{\frac{\Dz L} {\eps^2} \pare{\frac \sigma \eps}^{\frac{p}{p-1}}} 
        \quad \text{samples,} 
    \end{equation}
    where $\tau \sim \unif([\li])$. Here $\dualnorm \cdot = \norm\cdot_{S_{r'}}$ and $\norm \cdot_{\ell_{r'}}$, where $r' = \nicefrac r {(r-1)}$, for the Schatten and $\ell$ case respectively.
\end{corollary}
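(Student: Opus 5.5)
The plan is to reduce the corollary to \Cref{thm:main_conv} by checking its geometric hypothesis for the Schatten and $\ell_r$ spaces. Then I convert the averaged bound into a sample complexity via the uniformly random output index $\tau$. Set $q = \nicefrac p{(p-1)} \geq 2$, so that $r \leq q$.

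\textbf{Step 1: $q$-uniform convexity with $K = 1$.} I first treat $r \geq 2$. By \Cref{example:schatten_p}, the space $(\Bc, \norm\cdot_{S_r})$ (resp.\ $\ell_r$) is $r$-uniformly convex with $K_r = 1$. I then need to pass from $r$-convexity to $q$-convexity for $q \geq r$.

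The cleanest route is via duality with \Cref{lem:convexity_smoothness_duality}. The dual space is $S_{r'}$ (resp.\ $\ell_{r'}$) with $r' \le 2$. It is $r'$-smooth with constant $1$, and I want to show it is also $p$-smooth with constant $1$, where $p \le r'$.

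Write $a = \norm x$ and $b = \norm y$. The $r'$-inequality gives
\[
\norm{x+y}^{r'} + \norm{x-y}^{r'} \le 2a^{r'} + 2b^{r'}.
\]
Apply the power-mean (concavity) inequality with exponent $p/r' \le 1$:
\[
\tfrac12\bigl(\norm{x+y}^p + \norm{x-y}^p\bigr) \le \Bigl(\tfrac12\bigl(\norm{x+y}^{r'} + \norm{x-y}^{r'}\bigr)\Bigr)^{p/r'} \le (a^{r'} + b^{r'})^{p/r'}.
\]
It then remains to show $(a^{r'}+b^{r'})^{p/r'} \le a^p + b^p$. This holds because $s \mapsto (a^s+b^s)^{1/s}$ is nonincreasing in $s$, so $\ell_{r'}$ norms are dominated by $\ell_p$ norms for $p \le r'$.

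Hence $S_p(\Bs) = 1$, and \Cref{lem:convexity_smoothness_duality} gives $K = K_q(\Bc) = 1$.

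\textbf{The case $r < 2$.} This needs care, since the dual exponent then satisfies $r' > 2$ and the dual space is not $p$-smooth with constant $1$. I expect this to be the main obstacle; the likely resolution is that the corollary implicitly uses $r$ with $r' \in [p, 2]$, i.e.\ $2 \le r \le q$. If $r<2$ must be covered, I would fall back on bounding $\dualnorm\cdot$ by the $S_2$ norm, at a constant or dimension cost. I would flag this range rather than claim it dimension-free.

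\textbf{Step 2: plug into \Cref{thm:main_conv}.} With $K=1$, the right-hand side is
\[
9\sqrt{\Dz L/T} + 14\bigl(\Dz L\,\sigma^{q}/T\bigr)^{(p-1)/(3p-2)} + 10\,\sigma\,T^{-(p-1)/(2p-1)}.
\]
The left-hand side equals $\E[\dualnorm{\nF(x_\tau)}]$ for $\tau\sim\unif([T])$. Requiring each term to be at most $\eps/3$ gives three conditions:
\begin{itemize}
\item[(a)] The first term needs $T \gtrsim \Dz L/\eps^2$.
\item[(b)] The second term needs $T \gtrsim \Dz L\,\sigma^q\,\eps^{-(3p-2)/(p-1)} = \frac{\Dz L}{\eps^2}(\sigma/\eps)^{q}$, using $\frac{3p-2}{p-1} = 2 + q$.
\item[(c)] The third term needs $T \gtrsim (\sigma/\eps)^{(2p-1)/(p-1)}$.
\end{itemize}

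\textbf{Step 3: combine.} The sum of the three thresholds in (a)--(c) suffices. This sum is $\Oc\bigl(\frac{\Dz L}{\eps^2}(\sigma/\eps)^q\bigr)$ in the regime of interest, where $\sigma \gtrsim \eps$ and $\Dz L \gtrsim \eps\sigma$, i.e.\ where the heavy-tailed term dominates. Since one sample is used per iteration, this gives the stated complexity. The identification $\dualnorm\cdot = \norm\cdot_{S_{r'}}$ (resp.\ $\ell_{r'}$) is the standard duality of Schatten and $\ell_r$ norms.
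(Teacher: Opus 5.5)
Your argument is correct and is essentially the paper's implicit proof. You show $K_q(\mathcal B)=1$ for $2\le r\le q$, plug $K=1$ into \Cref{thm:main_conv}, and keep only the leading term, under the regime caveat you state ($\sigma\gtrsim\varepsilon$, $\Delta_1 L\gtrsim\varepsilon\sigma$) that the paper leaves implicit. The one difference is that you pass through the dual and re-prove $S_p\le S_{r'}$ by power means, whereas the paper would directly combine \Cref{example:schatten_p}(ii) with \Cref{lem:make_smoothness_weaker}(ii).

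The paper's own tools likewise only reach $r\ge 2$, but your worry about $1<r<2$ is overstated. By the Ball--Carlen--Lieb inequality, $S_r$ and $\ell_r$ are $2$-uniformly convex with $K_2\le (r-1)^{-1/2}$, hence $K_q\le (r-1)^{-1/2}$, so the guarantee stays dimension-free there; only $r=1$ is genuinely excluded.
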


In particular, when choosing $r = 2$, \Cref{cor:main_conv_Schatten_ell} exactly reconstructs the existing heavy-tailed results for the Euclidean case \citep{NonconvexStochasticOptimization2025Liu,GradientClippingNormalization2024hubler}.
For $2 < r \leq \nicefrac{p}{(p-1)}$, the result is strictly stronger, as $\dualnorm \cdot \geq \norm{\cdot}_2 \geq \nicefrac 1 \rho \dualnorm \cdot$, where $\rho = \min\set{m, n}^{\nicefrac{(r-2)}{2r}}$ in Schatten-, and $\rho = d^{\nicefrac{(r-2)}{2r}}$ in $\ell$-geometry. A stationary-measure comparable result from the Euclidean setting hence has the form 
\begin{equation*}
    \Exp{\dualnorm{\nF \pare{\iterationx \tau}}} \leq \eps 
    \quad \text{after} \quad
    T = \Oc \pare{\rho^{\frac{3p-2}{p-1}} \cdot \frac{\Dz L} {\eps^2} \pare{\frac \sigma \eps}^{\frac{p}{p-1}}} 
    \quad \text{samples.}
\end{equation*}
This multiplicative factor can be considerable, as already for moderate values such as $p = 1.5$ and $r = \nicefrac{p}{(p-1)}$ we have $\rho^{\nicefrac{(3p-2)}{(p-1)}} \approx 10^4$ in the Schatten-, and $\approx 10^7$ in the $\ell$-geometry for modern transformers \citep{PalmScalingLanguage2023chowdhery}.

The main technical novelty in the proof of \Cref{thm:main_conv} is based on the careful application of \Cref{thm:banach_martingale} to derive the following deviation bound on our gradient estimator.

\begin{lemma}\label{lem:mom:deviation_bound_simplified}
    Suppose that \Cref{assum:lsmooth,assum:noise} hold, and let $q = \nicefrac p {(p-1)}$. Assume that $(\Bc, \norm \cdot)$ is $q$-uniformly convex with $q$-convexity constant $K$. Then the iterates generated by \ourAlg\ with $\sst \equiv \ssi$ and $\momt \equiv \beta$ satisfy
    \begin{equation*}
        \iSum \Exp{\dualnorm{\nFxt - \mt}}
        \leq
        \frac{ \varsym K }{1-\beta}
        + \varsym K T (1-\beta)^{\frac{p-1}{p}}
        + \frac{\ssi L T}{1-\beta}.
    \end{equation*}
\end{lemma}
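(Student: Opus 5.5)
Unrolling the momentum recursion gives the error decomposition. Write $\epsilon_t \coloneqq \nfxt - \nFxt$ for the noise and $\hat e_t \coloneqq \mt - \nFxt$ for the estimator error. Using $m_t = \beta m_{t-1} + (1-\beta)\nfxt$ we obtain
\begin{equation*}
    \hat e_t = \beta \hat e_{t-1} + \beta\pare{\nabla F(x_{t-1}) - \nFxt} + (1-\beta)\epsilon_t .
\end{equation*}
Unrolling from $\hat e_1 = \epsilon_1$ (since $m_0 = \nabla f(x_1,\xi_1)$, so that $m_1=\nabla f(x_1,\xi_1)$) yields
\begin{equation*}
    \hat e_t = \beta^{t-1}\epsilon_1 + (1-\beta)\sum_{s=2}^{t}\beta^{t-s}\epsilon_s + \sum_{s=2}^{t}\beta^{t-s+1}\pare{\nabla F(x_{s-1}) - \nabla F(x_s)} .
\end{equation*}
We bound the three pieces separately in $\dualnorm\cdot$ and then sum over $t$.

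\textbf{Drift term.} By \lsmooth\ and $\norm{x_s - x_{s-1}} \le \ssi$ (the lmo output has norm at most one), each summand is at most $\beta^{t-s+1}L\ssi$. The geometric sum gives at most $\ssi L/(1-\beta)$ per $t$, hence $\ssi L T/(1-\beta)$ after summing over $t$.

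\textbf{Initial noise.} We have $\Exp{\dualnorm{\epsilon_1}} \le \sigma$ by Jensen. Then $\sum_t \beta^{t-1}\sigma \le \sigma/(1-\beta) \le \sigma K/(1-\beta)$, using $K\ge 1$.

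\textbf{Martingale part (main step).} For fixed $t$, set $Y_s = (1-\beta)\beta^{t-s}\epsilon_s$. These form a martingale difference sequence in $\Bs$ with respect to the filtration generated by $\xi_1,\dots,\xi_s$. The reason is that $x_s$ is measurable with respect to the past while $\xi_s$ is independent of it, so \pBCM\ gives $\Exp{\epsilon_s \mid \mathcal F_{s-1}} = 0$ and $\Exp{\dualnorm{\epsilon_s}^p \mid \mathcal F_{s-1}} \le \sigma^p$. By \Cref{lem:convexity_smoothness_duality}, $S_p(\Bs) = K_q(\Bc) = K$, so \Cref{thm:banach_martingale} together with Jensen gives
\begin{equation*}
    \Exp{\dualnorm{\textstyle\sum_s Y_s}} \le \pare{\Exp{\dualnorm{\textstyle\sum_s Y_s}^p}}^{1/p} \le K (1-\beta)\sigma\pare{\textstyle\sum_{s}\beta^{p(t-s)}}^{1/p} \le K\sigma (1-\beta)\pare{1-\beta^p}^{-1/p}.
\end{equation*}
Since $1-\beta^p \ge 1-\beta$ for $p\ge1$, this is at most $K\sigma(1-\beta)^{(p-1)/p}$. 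Summing over $t$ gives the middle term $\sigma K T(1-\beta)^{(p-1)/p}$.

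The main obstacle is making the application of \Cref{thm:banach_martingale} rigorous. The theorem is stated for unconditional moments, while \pBCM\ controls $\epsilon_s$ at a fixed point $x$; since $x_s$ is random, we must pass to conditional moments via independence and the tower property to obtain $\Exp{\dualnorm{\epsilon_s}^p} \le \sigma^p$. The weights $\beta^{t-s}$ are deterministic, so they do not affect the martingale property. Collecting the three bounds yields the claim.
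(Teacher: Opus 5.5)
Your proposal is correct and follows essentially the same route as the paper: the same unrolled decomposition of $m_t - \nabla F(x_t)$, smoothness together with $\|x_s - x_{s-1}\| \le \eta$ for the drift term, and the duality $S_p(\mathcal B^*) = K_q(\mathcal B) = K$ combined with the Banach-space martingale inequality and $(1-\beta)^p/(1-\beta^p) \le (1-\beta)^{p-1}$ for the noise. The only cosmetic difference is that you bound the initial term $\beta^{t-1}\epsilon_1$ separately via Jensen and $K \ge 1$, as the main-text sketch does, whereas the appendix proof folds it into the martingale sum via $\sum_{\tau} b_{\tau,t}^p \le \beta^{p(t-1)} + (1-\beta)^{p-1}$; both give the same bound.
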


The above Lemma allows us to control the convergence of \ourAlg\ directly in its native geometry, without having to reduce to the Euclidean case through norm equivalence, thereby avoiding the dimension dependence. In the special case of the Euclidean geometry, the above Lemma exactly reconstructs previous results \citep{MomentumImprovesNormalized2020cutkosky}. The extended version of this Lemma and its proof can be found in \Cref{lem:mom:deviation_bound}, we provide a proof sketch below.

\begin{proof}[Proof Sketch of \Cref{lem:mom:deviation_bound_simplified}]
    First note that the momentum deviation can be unrolled to
    \begin{equation*}
        \mt - \nFxt
        = a (\iterationm \fin - \nF \pare{\xfin})
        + \sum_{\tau = 2}^\ii b_\tau \pare{\nf\pare{\iterationx \tau, \iterationxi \tau} - \nF \pare{\iterationx \tau}}
        + \sum_{\tau = 2}^\ii c_\tau \pare{\nF \pare{\iterationx {\tau - 1}} - \nF \pare{\iterationx \tau}},
    \end{equation*}
    for some $a, b_\tau, c_\tau$. Here the last term can be controlled deterministically using smoothness, and the first term can be controlled after taking expectation. For the middle term let us denote $\gat \coloneqq \nfxt - \nFxt$ and note that $\iterationga \fin, \dots, \iterationga \ii$ is a martingale difference sequence in $\Bs$. Furthermore, by \Cref{lem:convexity_smoothness_duality}, we know that $\Bs$ is $p$-uniformly smooth with $S_p(\Bs) = K$. Hence we can apply \Cref{thm:banach_martingale} to get
    \begin{align*}
        \Exp{\dualnorm{\sum_{\tau = 2}^\ii b_\tau \pare{\nf\pare{\iterationx \tau, \iterationxi \tau} - \nF \pare{\iterationx \tau}}}}
        &\leq \Exp{\dualnorm{\sum_{\tau = 1}^\ii b_\tau \pare{\nf\pare{\iterationx \tau, \iterationxi \tau} - \nF \pare{\iterationx \tau}}}^p}^{\nicefrac 1 p}\\
        &\leq K \pare{\sum_{\tau = 1}^\ii b_\tau^p \Exp{\dualnorm{\iterationga \tau}^p}}^{\nicefrac 1 p}
        \leq \sigma K \pare{\sum_{\tau = 1} b_\tau^p}^{\nicefrac 1 p}.
    \end{align*}
    Plugging in the corresponding values of $a, b_\tau, c_\tau$ wraps up the proof.
\end{proof}

\begin{remark}
    It is well-known that weight-decay is often required in practice to efficiently train large language models \citep{team2025kimi}. 
    For normalised steepest descent based methods such as \ourAlg, this regularisation has a natural constrained interpretation as a conditional gradient step for optimisation over the corresponding norm ball \citep{SCION2025pethick}.
    We extend our convergence analysis to this constrained stochastic conditional gradient setting in \Cref{sec:app.proofs.scg}.
\end{remark}

\subsection{Guarantees for \texorpdfstring{\muon}{Muon}}
\label{sec:main.muon}
While the previous results established convergence of \ourAlg\ for a broad range of norms, they do not cover \muon\ as the Schatten-$\infty$ norm is not uniformly convex. Instead, we will show in this section that \muon\ can automatically adapt to the optimal Schatten-$r$ geometry, thereby achieving a reduced dimension dependence without any modification to the algorithm. A more general version under $(L_0, L_1)$-smoothness and the corresponding proofs can be found in \Cref{sec:app.proofs.muon}.

The main technical idea is to use equivalence of norms only to get from the nuclear norm to the Schatten-$p$ norm, where $p$ is the tail index, instead of going all the way to the Frobenius norm. This allows us to heavily reduce the dimension dependence as the noise becomes more heavy-tailed, while still controlling a strictly stronger stationarity measure than the Frobenius norm. Let us now state the result.

\begin{theorem}[Convergence of \muon\ under Heavy-Tailed Noise]
    \label{thm:muon_convergence}
    Consider $\Bc = \R^{m \times n}$ and suppose \Cref{assum:lower,assum:lsmooth,assum:noise} hold. Furthermore let $\kappa \coloneqq \min\set{m, n}^{\nicefrac{(p-1)}p}$. Then the iterates generated by \muon\ with parameters
    \begin{equation*}
        \sst \equiv \sqrt{\frac{\Dz (1 - \momentum)}{L T}}, \qquad
        \momt \equiv 1 - \min\set{1, \max\set{
            \pare{\frac{\Dz L}{\varsym^2 \kappa^2 T}}^{\frac{p}{3p-2}},
            \li^{-\frac{p}{2p-1}}
        }}
    \end{equation*}
    satisfy
    \begin{equation*}
        \frac 1 \li \iSum \Exp{\nucnorm{\nFxt}}
        \leq 9 \sqrt{\frac{\Dz L} \li} 
        + 14 \pare{\min\set{m, n} \frac{\Dz L \varsym^{\frac p {p-1}}} \li}^{\frac{p-1}{3p-2}} 
        + 10 \frac{\min\set{m, n}^{\frac{(p-1)}p} \varsym }{\li^{\frac{p-1}{2p-1}}}.
    \end{equation*}
\end{theorem}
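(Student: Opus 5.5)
The approach is to rerun the proof of \Cref{thm:main_conv} with the spectral norm as primal norm. The only change is in the noise term, where we pass through the Schatten-$p$ geometry instead of the dual norm. Write $\Bc = (\R^{m\times n}, \opnorm\cdot)$, so that $\dualnorm\cdot = \nucnorm\cdot$, and let $k = \min\{m,n\}$.

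\textbf{Descent inequality.} Since $\argmin_{\opnorm d \le 1}\angles{\mt, d}$ has spectral norm one and attains $-\nucnorm{\mt}$, the standard one-step argument from \lsmooth\ gives
\begin{equation*}
    F(x_{t+1}) \le F(x_t) - \ssi \nucnorm{\nFxt} + 2\ssi \nucnorm{\nFxt - \mt} + \tfrac{L\ssi^2}{2}.
\end{equation*}
Summing and using \Cref{assum:lower} yields
\begin{equation*}
    \frac1T\iSum \Exp{\nucnorm{\nFxt}} \le \frac{\Dz}{\ssi T} + \frac{L\ssi}{2} + \frac2T \iSum \Exp{\nucnorm{\nFxt-\mt}}.
\end{equation*}
This step is identical to the uniformly convex case and uses no convexity of the norm.

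\textbf{Deviation bound.} We need an analogue of \Cref{lem:mom:deviation_bound_simplified} with $K$ replaced by $\kappa = k^{(p-1)/p}$. Unroll $\mt - \nFxt$ into the three parts from the sketch.

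The smoothness part $\sum_\tau c_\tau(\nF(x_{\tau-1})-\nF(x_\tau))$ is bounded directly in nuclear norm. \lsmooth\ is stated in the nuclear/spectral pair, so each increment is at most $L\ssi$, and summing the geometric weights gives at most $\ssi L/(1-\beta)$.

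For the noise parts (the initial term and the martingale sum), first pass to the Schatten-$p$ norm using $\nucnorm{A} \le k^{1-1/p}\norm{A}_{S_p} = \kappa \norm{A}_{S_p}$, which is Hölder on the singular values. By \Cref{example:schatten_p}, $S_p$ is $p$-uniformly smooth with $S_p(S_p)=1$, so \Cref{thm:banach_martingale} applies in $S_p$ with constant $1$. It remains to bound $\Exp{\norm{\gat}_{S_p}^p}$, and here $\norm{A}_{S_p} \le \nucnorm{A}$ (monotonicity of Schatten norms) gives $\Exp{\norm{\gat}_{S_p}^p} \le \varsym^p$ from \pBCM.

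The only dimension loss therefore comes from the single conversion $\nucnorm\cdot \le \kappa\norm\cdot_{S_p}$, applied outside the expectation after Jensen:
\begin{equation*}
    \Exp{\nucnorm{\textstyle\sum b_\tau \gat}} \le \kappa\,\Exp{\norm{\textstyle\sum b_\tau \gat}_{S_p}^p}^{1/p} \le \kappa\varsym\pare{\textstyle\sum b_\tau^p}^{1/p}.
\end{equation*}
The initial term is handled the same way and gives $\kappa\varsym\beta^{t-1}$. With the geometric weights $b_\tau = (1-\beta)\beta^{t-\tau}$ we get $\pare{\sum b_\tau^p}^{1/p} \le (1-\beta)^{(p-1)/p}$. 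This reproduces \Cref{lem:mom:deviation_bound_simplified} with $K$ replaced by $\kappa$:
\begin{equation*}
    \iSum \Exp{\nucnorm{\nFxt-\mt}} \le \frac{\varsym\kappa}{1-\beta} + \varsym\kappa T(1-\beta)^{\frac{p-1}{p}} + \frac{\ssi L T}{1-\beta}.
\end{equation*}

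\textbf{Parameter tuning.} Combining the two displays gives exactly the expression obtained in the proof of \Cref{thm:main_conv}, with $K$ replaced by $\kappa$. The stated $\ssi$ and $\beta$ are the same choices with $K \to \kappa$, so the same case analysis over which term attains the $\min$/$\max$ yields the same bound. Substituting $\kappa^{p/(p-1)} = k$ turns $(\varsym K)^{p/(p-1)}$ into $k\,\varsym^{p/(p-1)}$, and the last term becomes $k^{(p-1)/p}\varsym/T^{(p-1)/(2p-1)}$. This matches the claimed bound term by term.

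\textbf{Main obstacle.} The crux is the deviation step. The key observation is to apply Hölder once, to the sum after the martingale inequality, rather than to each increment. Applying it to each increment in Frobenius norm would cost $\sqrt k$ and lose the $p$-dependent improvement. Beyond that, one must check that the monotonicity $\norm\cdot_{S_p}\le\nucnorm\cdot$ lets the noise assumption stated in nuclear norm transfer to $S_p$ with no extra factor.
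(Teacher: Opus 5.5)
Your proposal is correct and follows the paper's route. The paper obtains this theorem from \Cref{cor:convergence_muon} (i.e.\ \Cref{thm:convergence_nssdm_norm_equivalence_formulation} with $\rho=\kappa$, $S=1$, $L_1=0$), which likewise applies $\nucnorm\cdot\le\kappa\norm\cdot_{S_p}$ once to the whole noise sum, then Jensen and \Cref{thm:banach_martingale} in $S_p$ with constant $1$, and transfers \pBCM\ via $\norm\cdot_{S_p}\le\nucnorm\cdot$; your direct $L$-smooth descent step gives constants smaller than $9,14,10$, so the stated bound follows.
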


When dropping non-leading terms, the sample complexity guarantee for \muon\ is hence given by
\begin{equation*}
    \Oc\pare{
        \min\set{m,n} \cdot \frac{\Dz L}{\eps^2}\pare{\frac{\varsym} \eps}^{\frac p {p-1}}
    }.
\end{equation*}
In comparison, the existing heavy-tailed Euclidean guarantees \citep{GradientClippingNormalization2024hubler,NonconvexStochasticOptimization2025Liu} imply a nuclear norm sample complexity guarantee of
\begin{equation}\label{eq:euclidean_stationarity_adjusted_guarantee}
    \Oc \pare{
        \min\set{m,n}^{\frac{3p-2}{2(p-1)}} \cdot \frac{\Dz L}{\eps^2}\pare{\frac{\varsym} \eps}^{\frac p {p-1}}
    },
\end{equation}
which is strictly worse in its dimension dependence. To illustrate, for $p = 1.5$, modern transformer based models \citep{PalmScalingLanguage2023chowdhery} satisfy $\min\set{m,n} \approx 10^4$ but $\min\set{m,n}^{\frac{3p-2}{2(p-1)}} \approx 10^{10}$, reducing the dimension dependence by 6 orders of magnitude. Hence, \Cref{thm:muon_convergence} shows that \muon\ can automatically adapt to the optimal Schatten-$p$ geometry, thereby avoiding the worst-case rank-dependence of existing heavy-tailed guarantees using the Euclidean geometry.

\begin{remark}\label{rem:full_network_muon}
For simplicity of presentation, 
\Cref{thm:muon_convergence} is stated for a single matrix. However, the extended result \Cref{thm:convergence_nssdm_norm_equivalence_formulation} directly applies to neural networks consisting of a collection of weight matrices $\{W_l\}_{l\in[D]}$, by equipping $\R^{m_1, n_1} \times \hdots \times \R^{m_D, n_D}$ with the norm $\max_{l\in[D]} \|W_l\|_{S_\infty}$ \citep{MUON2024Bernstein}. In this case, the dimension dependence becomes ${\sum_{l \in [D]} \min\set{m_l, n_l}}$. A further extension to full networks including vector-valued weights is discussed in \Cref{rem:application_full_network_including_vectors}.
\end{remark}

\subsection{First-Order Lower-Bounds for Nuclear-Stationarity}
\label{sec:main.lb}

Finally we theoretically examine whether the remaining dimension-dependence in the guarantee for \muon\ can be avoided. We show that, in the above setting, \emph{no first-order algorithm} can achieve a better dimension-dependence than \muon, when convergence is measured in $\nucnorm \cdot$.

We make use of the sample complexity framework of \citet{LowerBoundsNonConvex2023arjevani}. More precisely, we consider possibly randomised first-order algorithms $A \in \Ac_{\text{rand}}$ which, given a starting point $x_0$, random seed $r$ and access to a stochastic gradient oracle $\nf \pare{\cdot, \xi}$, produce a sequence of random iterates $\xfin, \iterationx 2, \dots$. For $\Dz, L \geq 0$, we consider the function class
\begin{equation*}
    \Fc_{\Dz, L}(m, n) \coloneqq \big\{
        F \colon \R^{m \times n} \to \R\, \big\vert
        \dualnorm{\nabla F(x) - \nabla F(y)} \leq L \norm{x-y} \text{and } F(0) - \inf_x F(x) \leq \Dz
    \big\}
\end{equation*}
and, for $\sigma \geq 0, p \in (1, 2]$ the oracle class $\Oc_{p, \sigma}$ of gradient oracles satisfying \pBCM. We will derive lower-bounds on the distributional complexity
\newcommand{\randCompl}{\mathfrak m_{\eps}^{\text{rand}}\pare{\dualnorm \cdot, \Dz, L, p, \varsym}}
\newcommand{\nucCompl}{\mathfrak m_{\eps}^{\text{rand}}\pare{\nucnorm \cdot, \Dz, L, p, \varsym}}
\begin{equation*}
    \randCompl \coloneqq 
    \sup_{O \in \Oc_{p, \varsym}} 
    \sup_{P_F \in \mathcal P [\Fc_{\Dz, L}(m, n)]}
    \inf_{A \in \Ac_{\text{rand}}}
    \inf \set{T \in \N \suchthat \Exp{\dualnorm{\nabla F(x_T)}} \leq \eps},
\end{equation*}
where $\mathcal P [\Fc_{\Dz, L}(m, n)]$ denotes the set of all probability distributions over $\Fc_{\Dz, L}(m, n)$. Lower-bounds on this distributional complexity imply lower-bounds on the worst-case complexity \citep{ProblemComplexityMethod1983nemirovskij}.

\begin{theorem}[First-Order Nuclear-Stationarity Lower-Bound] \label{thm:lower_bound}
    Let $\Dz, L, \sigma > 0$, $p \in (1, 2]$ and assume $\eps < \min\set{\frac \sigma 4, \sqrt{\Dz L} / 2}$. Then, for all $m, n \in \N_{\geq 2}$, we have
    \begin{equation*}
        \nucCompl \geq \frac{\min\set{m, n}} 2 \cdot \pare{\frac{\sigma}{4 \eps}}^{\frac p {p-1}}.
    \end{equation*}
\end{theorem}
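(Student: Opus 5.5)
The bound does not depend on $\Dz$ or $L$, so I would prove it with a purely statistical construction. The hard instance hides $k \coloneqq \min\set{m,n}$ independent random signs on the diagonal. The oracle reveals any given sign only rarely, so an algorithm that has not seen a coordinate's sign cannot make that coordinate's gradient small. Because the nuclear norm adds up the diagonal contributions, each of the $k$ coordinates must be learned separately, which gives the factor $k$. The factor $(\sigma/\eps)^{p/(p-1)}$ comes from how rarely a heavy-tailed oracle with bounded $p$-th moment can afford to reveal information.

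\textbf{Construction.} Let $s \in \set{\pm 1}^k$ be uniform and set
\begin{equation*}
    F_s(x) = \delta \lambda \sum_{i=1}^k \psi\pare{s_i x_{ii}/\lambda},
    \qquad
    \psi(t) = \sin(t + \pi/4).
\end{equation*}
The gradient is $\nabla F_s(x) = \delta \operatorname{diag}\pare{s_i \psi'(s_i x_{ii}/\lambda)}$. Since $\psi'(t) = \cos(t + \pi/4)$, the key property is
\begin{equation*}
    |\psi'(t)| + |\psi'(-t)| = |\cos(t+\pi/4)| + |\sin(t+\pi/4)| \geq 1 \quad \text{for all } t.
\end{equation*}
\textbf{Smoothness and suboptimality.} The Hessian is diagonal, so $\nucnorm{\nabla F(x) - \nabla F(y)} \le (\delta/\lambda) \sum_i |x_{ii} - y_{ii}| \le (\delta/\lambda)\, k \opnorm{x-y}$. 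I would tighten this by checking the coordinatewise bound directly, and it will likely force me to work with $\lambda$ and $\delta$ jointly. The gap is at most $2 k \delta \lambda$. I then pick $\lambda$ so that both the smoothness and the $\Dz$ constraint hold; the assumption $\eps < \sqrt{\Dz L}/2$ is exactly what should make this choice possible once $k\delta \approx 4\eps$. This constant bookkeeping is where I expect the main friction, since the nuclear/spectral pairing is what makes the factor $k$ appear in both $L$ and $\Dz$.

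\textbf{Oracle.} At query $x$, draw $i \sim \unif([k])$ and $B \sim \mathrm{Bernoulli}(q)$. Return $\frac{k B}{q} [\nabla F_s(x)]_{ii}\, e_i e_i^\top$. This is unbiased. If $B = 0$ the output is $0$ and carries no information about $s$. The noise has nuclear norm at most roughly $k\delta/q$ with probability $q$, plus a bounded term $\le k\delta$. Hence $\Exp{\nucnorm{\text{noise}}^p} \lesssim q (k\delta/q)^p$. Setting $k\delta = 4\eps$ and $q = (4\eps/\sigma)^{p/(p-1)}$ gives $\Exp{\nucnorm{\text{noise}}^p} \lesssim \sigma^p$. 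The condition $\eps < \sigma/4$ guarantees $q < 1$ and absorbs the bounded part.

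\textbf{Counting argument.} After $T$ queries, each coordinate $i$ has been revealed with probability at most $Tq/k$. For an unrevealed coordinate, $x_{ii}$ (and the algorithm's seed) is independent of $s_i$. Averaging over $s_i$ and using the key property of $\psi$ gives $\Exp{|\psi'(s_i x_{ii}/\lambda)|} \ge 1/2$. Summing over the diagonal yields
\begin{equation*}
    \Exp{\nucnorm{\nabla F_s(x_T)}} \ge \delta \cdot \frac{1}{2} \cdot k\pare{1 - \frac{Tq}{k}}.
\end{equation*}
If $T < k/(2q)$, this is at least $k\delta/4 = \eps$, with the constants to be tuned so the inequality is strict. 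Therefore $T \ge \frac{k}{2}\pare{\frac{\sigma}{4\eps}}^{p/(p-1)}$. The argument holds against any randomized algorithm, and for the fixed distribution over $s$ it bounds the distributional complexity.
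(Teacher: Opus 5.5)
Your high-level mechanism is the paper's: hidden signs on the $k$ diagonal positions, reveals driven by $B\sim\mathrm{Bern}(q)$ and a uniform index, $\nucnorm{\cdot}$ bounded below by the sum of diagonal magnitudes, and a non-revelation probability. But this construction cannot deliver the stated constants, and the two class-membership checks you defer are false as claimed.

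\emph{Gap.} The bound $\nucnorm{\nabla F_s(x)-\nabla F_s(y)}\le (k\delta/\lambda)\opnorm{x-y}$ is attained: shift every diagonal entry by $h$ at points where $|\psi''|=1$. So smoothness forces $\lambda\ge k\delta/L$, and $F_s(0)-\inf F_s=(1+1/\sqrt2)k\delta\lambda\ge(1+1/\sqrt2)(k\delta)^2/L$. With $k\delta=4\varepsilon$ this is about $27\varepsilon^2/L$. Even with $k\delta$ just above $2\varepsilon$ it exceeds $6.8\varepsilon^2/L$; that is the least your counting step allows, since an algorithm can sit where one sign gives zero gradient, so an unrevealed coordinate only forces $\delta/2$. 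The hypothesis $\varepsilon<\sqrt{\Delta_1 L}/2$ only gives $\Delta_1>4\varepsilon^2/L$.

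\emph{Noise.} Take any $x$ with $|\psi'(s_ix_{ii}/\lambda)|=1$ for all $i$. On $\{B=1\}$ the noise has nuclear norm $(k/q+k-2)\delta\ge k\delta/q$. So this event alone contributes at least $q(k\delta/q)^p=(k\delta)^pq^{1-p}=\sigma^p$ under your choices $k\delta=4\varepsilon$, $q=(4\varepsilon/\sigma)^{p/(p-1)}$. The event $\{B=0\}$ adds $(1-q)(k\delta)^p>0$. Hence the $p$-th moment strictly exceeds $\sigma^p$, and nothing is ``absorbed.'' Retuning $q,\delta,\lambda$ only trades constants, and with this $\psi$ and oracle no tuning works for $p=2$, $\varepsilon\uparrow\sigma/4$, $k\to\infty$. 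There the counting step needs $k\delta\ge 2\varepsilon e^{q/2}$, while the exact second moment at such $x$ tends to $(k\delta)^2(3+1/q)$. Together these would require $e^q(3+1/q)\le 4$, which has no solution.

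The root cause is twofold. First, $|\psi'(t)|+|\psi'(-t)|\ge 1$ only guarantees half the gradient scale on an unrevealed coordinate. Second, importance-sampling the whole entry $[\nabla F_s(x)]_{ii}$ makes the noise scale with the sign-independent part of the gradient too, which is also what forces bounded $\psi'$.

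The paper instead uses $F_s(x)=\frac{L}{2\kappa}\fnorm{x}^2-\frac{\delta}{\kappa}\angles{S_s,x}$ with oracle $\frac{L}{\kappa}x-\frac{\delta}{q}B s_I E_I$. The sign-independent quadratic part is returned exactly, and only the linear sign term is randomised. Then $\frac12(|L\nu-\delta|+|L\nu+\delta|)\ge\delta$ keeps the full scale on unrevealed coordinates. This allows $\delta=2\varepsilon$ and a gap of $\delta^2/(2L)=2\varepsilon^2/L\le\Delta_1$. With $q=(2\delta/\sigma)^{p/(p-1)}=(4\varepsilon/\sigma)^{p/(p-1)}$ it also gives the moment bound $2^{p-1}(\delta^p+\delta^p q^{1-p})\le\sigma^p$.

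As written, your argument proves $\Omega(\min\{m,n\}(\sigma/\varepsilon)^{p/(p-1)})$ only under $\varepsilon\le c\min\{\sigma,\sqrt{\Delta_1 L}\}$ for a smaller absolute $c$, with a worse constant in the bound, not the theorem as stated.

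A smaller point: ``$x_{ii}$ is independent of $s_i$ for an unrevealed coordinate'' should be stated as follows. On the non-revelation event, the trajectory coincides with a counterfactual trajectory that is a function of $(r,s_{-i},B,I)$ only.
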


We give an informal proof sketch, the complete proof can be found in \Cref{sec:app.lower_bounds}. Therefore let $(e_i)_{i=1}^{m}$ and $(f_i)_{i=1}^{n}$ be standard bases of $\R^m$ and $\R^n$ respectively, and denote $E_i \coloneqq e_i f_i^\top \in \R^{m \times n}$. Furthermore let $ q \coloneqq \pare{\frac{4 \eps}{\sigma}}^{\frac p {p-1}} \in (0, 1]$, $s \in \set{\pm 1}^{\kappa}$ be a sign vector and $S_s \coloneqq \sum_{i=1}^{\kappa} s_i E_i$, where $\kappa \coloneqq \min \set{m,n}$. Now we construct the family of hard instances as
\begin{align*}
    F_s(x) &\coloneqq \frac{L}{2\kappa} \fnorm{x}^2 - \frac{2 \eps}{\kappa} \langle S_s, x\rangle_F,\qquad
    \nabla f_s (x, \xi) \coloneqq
    \frac L {\kappa} x - \frac{2 \eps}{q} B s_I E_I,
\end{align*}
where $\xi = (B, I)$ with $B \sim \operatorname{Bern}(q)$ and $I \sim \operatorname{Unif}([\kappa])$ independently. Starting at zero, one can then roughly show that the algorithm must discover the sign of each $s_i$ to reach an $\eps$-$\nucnorm \cdot$-stationary point. However, in order to discover the sign of one $s_i$, $(B, I) = (1, i)$ must occur, which happens with probability $\frac q {\kappa}$. Hence, by a coupon collector type argument, we need at least $\frac {\kappa} q$ iterations in expectation to discover the sign of all $s_i$, which gives the desired lower-bound after showing that $F_s$ and $\nabla f_s$ satisfy the required smoothness and noise properties.

{
We note that the above lower-bound matches \Cref{thm:muon_convergence} in its dimension and $\sigma$ dependence, but not in its $\eps$ dependence. We will now present a lower-bound that addresses this gap, matching the exact dependence. It comes at the cost of a high-dimensional construction, which is known to be necessary to achieve the optimal $\eps$ dependence \citep{LowerBoundsNonConvex2023arjevani}. For simplicity, we only prove a version for zero-respecting algorithms, i.e., algorithms which satsify $\mathrm{supp}(\xtp) \subseteq \bigcup_{\tau = 1}^t \mathrm{supp}(g_\tau)$, the extension to randomised follows with the randomised rotation argument \citep{LowerBoundsNonConvex2023arjevani}.

\newcommand{\zrCompl}{\mathfrak m_{\eps}^{\text{zr}}\pare{\nucnorm \cdot, \Dz, L, p, \varsym}}
\begin{theorem}
    Let $c \coloneqq 1 / 300, \Dz, L, \sigma > 0$, $p \in (1, 2]$ and assume $0 < \eps \leq c \min\set{\sigma, \sqrt{\Dz L}}$. Then, for all $m, n \in \N_{\geq 2}$ with
    $
        \max\set{m, n} \geq \min\set{m, n} \frac{\Dz L}{\eps^2}
    $
    we have
    \begin{equation*}
        \zrCompl \geq c^2 \min\set{m, n} \cdot \frac{\Dz L}{\eps^2}\pare{\frac{c \varsym}{\eps}}^{\frac{p}{p-1}}.
    \end{equation*}
    In particular, \muon\ is first-order optimal for $\nucnorm \cdot$-stationarity.
\end{theorem}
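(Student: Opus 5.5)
The plan is to combine the high-dimensional zero-chain construction of \citet{LowerBoundsNonConvex2023arjevani} with the diagonal coupon-collector construction from \Cref{thm:lower_bound}. The chain will sit along the long dimension and be replicated $\kappa \coloneqq \min\{m,n\}$ times along the short dimension. Assume w.l.o.g.\ $m \le n$, so $\kappa = m$, and let $N \coloneqq \lfloor n/\kappa \rfloor \ge \Dz L/\eps^2$ by the dimension assumption.

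\textbf{Construction.} Partition the $n$ columns into $\kappa$ disjoint blocks of size $N$. For $i\in[\kappa]$ let $x_i \in \R^N$ be the entries of row $i$ restricted to block $i$. Let $\bar F_T\colon\R^T\to\R$ be the Arjevani et al.\ hard function with $T \asymp \Dz L/\eps^2 \le N$, suitably rescaled. It is Euclidean $L$-smooth, has $\bar F_T(0)-\inf \bar F_T \lesssim \Dz$, has the zero-chain property, and satisfies $\|\nabla \bar F_T(y)\|_2 \gtrsim \eps$ unless $\mathrm{prog}(y)\ge T$. Define
\begin{equation*}
F(X) \coloneqq \frac1\kappa\sum_{i=1}^{\kappa} \bar F_T(x_i).
\end{equation*}
\emph{Smoothness.} The gradient $\nabla F(X)$ is supported on the block-diagonal pattern and satisfies $\nucnorm{\nabla F(X)-\nabla F(Y)} \le \frac1\kappa\sum_i \|\nabla\bar F_T(x_i)-\nabla\bar F_T(y_i)\|_2 \le \frac L\kappa \sum_i\|x_i-y_i\|_2$. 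Since $\|x_i-y_i\|_2 \le \opnorm{X-Y}$ (each is a submatrix row), the sum is at most $L\opnorm{X-Y}$.
\emph{Function gap.} The gap is at most $\Dz$, because we average $\kappa$ copies.
\emph{Stationarity.} Each row $i$ contributes a vector in its own disjoint column block, so the rows are orthogonal and $\nucnorm{\nabla F(X)} \ge \frac1\kappa\sum_i \|\nabla \bar F_T(x_i)\|_2$. If more than half the copies have not finished their chain, then $\nucnorm{\nabla F}\gtrsim \eps$. I would rescale constants so that this reads $>\eps$.

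\textbf{Oracle.} Following the Arjevani et al.\ probability-$q$ zero-chain oracle, combined with coupon collection, set $q\asymp(c\sigma/\eps)^{-p/(p-1)}$. Draw $I\sim\mathrm{Unif}[\kappa]$ and $B\sim\mathrm{Bern}(q)$. The oracle returns the exact gradient of all copies except in the "progress coordinate" $\mathrm{prog}(x_j)+1$ of each copy. In that coordinate it returns $\frac{\kappa}{q} B\,\mathbb 1[I=j]$ times the true partial derivative, scaled by $1/\kappa$.

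Unbiasedness is immediate. The noise is supported on at most one row per sample, so its nuclear norm equals $|\,\frac{\kappa B\mathbb 1[I=j]}{q}-1|\cdot\frac{1}{\kappa}|\partial|$ summed over $j$. Using the bound $|\partial_{\mathrm{prog}+1}\bar F_T|\lesssim \eps$ on the chain derivative, I would compute the $p$-th moment. The dominant term is $\frac{q}{\kappa}\kappa\cdot(\kappa/q)^p(\eps/\kappa)^p = q^{1-p}\eps^p$, plus a mean-field part from the $-1$ terms across $\kappa$ rows, which is of order $\eps$ in nuclear norm. Choosing $q=(c'\eps/\sigma)^{p/(p-1)}$ makes this at most $\sigma^p$. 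This moment computation, and in particular handling the simultaneous $-1$ terms of all $\kappa$ rows without losing a $\kappa$ factor, is the step I expect to be the main obstacle. If it fails, I would zero out the noiseless progress coordinate so that only the single selected row is nonzero.

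\textbf{Counting.} For a zero-respecting algorithm, copy $j$ advances its progress by at most one exactly at the steps where $(B,I)=(1,j)$, which happens with probability $q/\kappa$. After $T_{\text{alg}}$ queries, the total progress summed over all copies is a Binomial$(T_{\text{alg}},q/\kappa)$ count. Reaching $\nucnorm{\nabla F}\le\eps$ in expectation requires at least $\kappa T/2$ total progress with constant probability. By Markov/Chernoff, this forces $T_{\text{alg}}\gtrsim \kappa T/(2q) \asymp \kappa\frac{\Dz L}{\eps^2}(c\sigma/\eps)^{p/(p-1)}$. Tracking the constants with $c=1/300$ would give the stated bound. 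Comparing with \Cref{thm:muon_convergence} then yields optimality of \muon.
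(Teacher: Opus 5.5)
Your proposal is correct, but it takes a different route from the paper's.

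\textbf{How the paper does it.} The paper uses the tall embedding into $\R^{\kappa d\times\kappa}$, which is your construction transposed. It gives each copy its \emph{own} independent reveal variable $\xi_i\sim\mathrm{Bern}(\theta)$ with $\theta=q/\kappa$. It rescales the Euclidean instance to $(\Dz/\kappa,\,L/\kappa,\,\eps/\kappa,\,\sigma/\kappa^{1/p})$ and then uses \Cref{lem:eucl_lower_bound} as a black box. By independence, each projection $P_i(X_t)$ is itself a randomised zero-respecting algorithm on the Euclidean instance. So each copy has expected gradient norm $>\eps/\kappa$, and the nuclear norm, which is their sum, exceeds $\eps$. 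There, the factor $\kappa$ enters through the mismatch between the rescalings $\sigma/\kappa^{1/p}$ and $\eps/\kappa$.

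\textbf{How yours differs.} In your construction the factor $\kappa$ is a coupon-collector effect: at most one copy can advance per sample, exactly as in \Cref{thm:lower_bound}. Per copy, both oracles reveal with i.i.d.\ probability $q/\kappa$, but yours couples the reveal events across copies. So you cannot apply the Euclidean theorem copy by copy. Instead you must open the zero-chain argument and count total progress globally: copy $j$'s progress grows only at its reveal times, and $\|\nabla\bar F_T\|_2$ stays large while progress is below $T$. The paper's route is more modular. Yours is self-contained, makes the origin of the $\min\{m,n\}$ factor transparent, and has a two-case noise computation.

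\textbf{Three small corrections.}

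First, total progress after $t$ queries is dominated by the number of $\tau$ with $B_\tau=1$, i.e.\ by $\mathrm{Bin}(t,q)$. The distribution $\mathrm{Bin}(t,q/\kappa)$ is the per-copy count. Your threshold $\kappa T/(2q)$ is consistent with the correct statement.

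Second, the moment step you flagged is not an obstacle. Write $\partial_j$ for copy $j$'s derivative in its coordinate $\mathrm{prog}(x_j)+1$, with $|\partial_j|\le C\eps$. The $-1$ terms contribute $\frac1\kappa\sum_j|\partial_j|\le C\eps$, and the revealed copy contributes at most $C\eps/q$. Hence $\nucnorm{\nabla f-\nabla F}\le C\eps(1+B/q)$ and $\Exp{\nucnorm{\nabla f-\nabla F}^p}\le 2^p(C\eps)^p q^{1-p}$. This is the paper's computation with $N\sim\mathrm{Bin}(\kappa,\theta)$ replaced by $B$. For it to be at most $\sigma^p$ you need $q=(\eps/(c\sigma))^{p/(p-1)}$ with $c$ small, which is your first formula. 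The later form $(c'\eps/\sigma)^{p/(p-1)}$ with $c'$ small goes the wrong way. Your fallback of zeroing the unrevealed coordinates would destroy unbiasedness, and it is not needed.

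Third, Markov or Chernoff is unnecessary. The number of finished copies is at most total progress divided by $T$. If unfinished copies have gradient norm above $2\eps$, linearity of expectation gives $\Exp{\nucnorm{\nabla F(X_t)}}\ge 2\eps\,(1-tq/(\kappa T))$. This exceeds $\eps$ for all $t<\kappa T/(2q)$, which is of order $\kappa\frac{\Dz L}{\eps^2}\bigl(\frac{c\sigma}{\eps}\bigr)^{p/(p-1)}$ since $T\asymp\Dz L/\eps^2$.
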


The formal proof can be found in \Cref{sec:app.lower_bounds.high_dim} and is based on embedding $\kappa$ instances of the $\R^d$ construction \citep{LowerBoundsNonConvex2023arjevani} into matrices of size $\kappa d \times \kappa$. The dimension requirement follows from the fact that the $\R^d$ construction requires $d \gtrsim \frac{\Dz L}{\eps^2}$ to achieve the optimal $\eps$ dependence.
}

\section{Experiments}
\label{sec:experiments}

This section aims to empirically validate the theoretical findings of this paper. Since it was empirically observed that the heavy-tailed gradient distribution occur in language modelling \citep{WhyGradientClipping2020zhang,ahn2024linear}, we focus on this task.\footnote{Our code is available on Github: \url{https://github.com/fhueb/ht-schatten-experiments}}

\textbf{Experimental Details.} We pre-train the 70m parameter architecture of the \texttt{Pythia} series of models \citep{PythiaSuiteAnalyzing2023biderman} from scratch on the FineWeb-Edu dataset\footnote{\url{https://huggingface.co/datasets/HuggingFaceFW/fineweb-edu}} \citep{penedo2024fineweb} for Chinchilla-optimal 1.4B tokens \citep{EmpiricalAnalysisComputeOptimal2022hoffmann}. Following previous work, we choose the sequence length to be 2048 and the batch size to be 512 \citep{PythiaSuiteAnalyzing2023biderman}. Additionally we use the standard hyper-parameters $\beta = 0.95, \lambda = 0.1$ for \texttt{MUON} and use the WSD learning rate schedule \citep{zhai2022scaling,hu2024minicpm,hagele2024scaling} with a warmup of $5\%$ and a cooldown period of $20\%$. We tune learning rates across the grid $[0.005, 0.0075, 0.01, 0.0125, 0.015, 0.03, 0.06]$ and repeat the experiments across the seeds $0, 1, 2$. Further details can be found in \Cref{sec:app.experimental_details}.

\begin{figure}[t]
    \centering
    \begin{subfigure}[t]{0.49\textwidth}
        \centering
        \includegraphics[width=\textwidth]{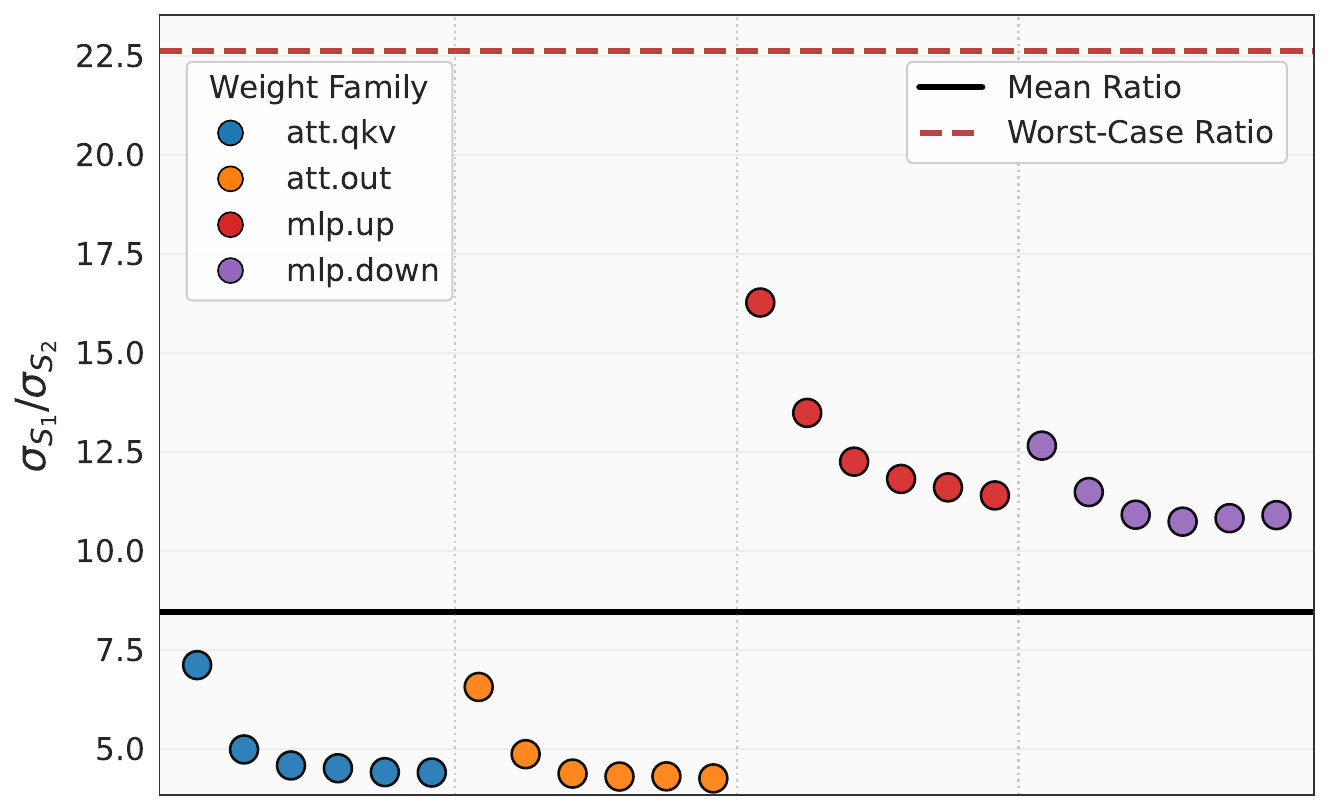}
        \caption{Ratios at Initialisation}
        \label{fig:noise_ratios_layers_init}
    \end{subfigure}
    \hfill
    \begin{subfigure}[t]{0.49\textwidth}
        \centering
        \includegraphics[width=\textwidth]{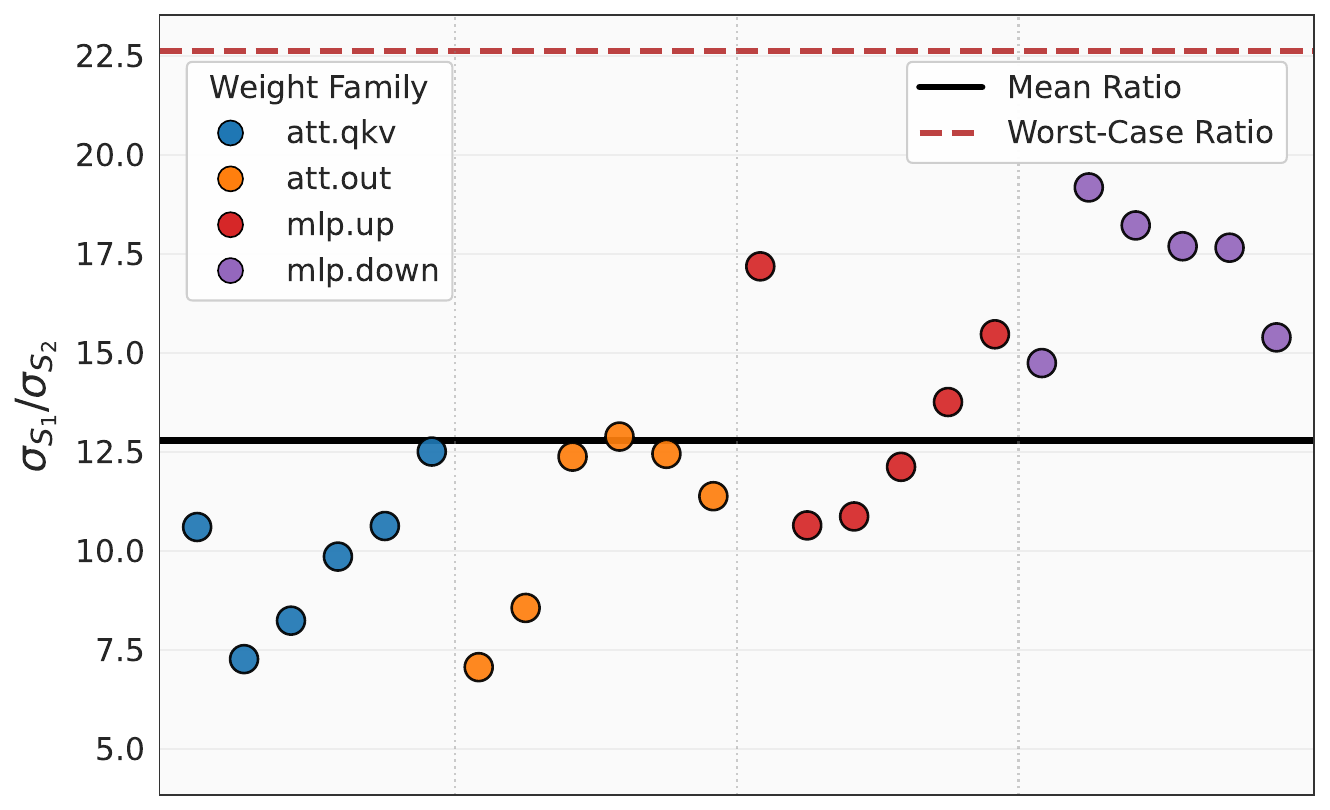}
        \caption{Ratios at Final Checkpoint}
        \label{fig:noise_ratios_layers_final}
    \end{subfigure}
    \caption{Ratios $\sigma_{S_1,p}/\sigma_{F,p}$ for $p = 1.5$ across all weight matrices trained by MUON. Colours indicate weight families, ordered by layer depth within. While non-trivial, the noise ratios stay considerably below the worst-case ratio $\sqrt{\min\set{m ,n}} \approx 22.5$.\vspace{-3mm}}
    \label{fig:noise_ratios_layers}
\end{figure}

\textbf{Norm Change in Noise Assumption.}
\Cref{sec:main} analyses \ourAlg\ directly in its native geometry rather than reducing to the Euclidean case. In particular, we consider the quantity
\begin{equation*}
    \sigma_{\dualnorm \cdot, p}^{p} \coloneqq \Exp{\dualnorm{\nf(x, \xi) - \nFx}^p} 
    \quad \text{rather than} \quad
    \sigma_{F, p}^{p} \coloneqq \Exp{\norm{\nf(x, \xi) - \nFx}_F^p}
\end{equation*}
which would be consistent with previous work. As noted by \citet{ConvergenceAnalysisMuon2025shen}, such a norm change in assumptions may implicitly introduce dimension dependencies. In the case of \muon, i.e., $\dualnorm \cdot = \norm \cdot_{S_1}$, we have the worst-case guarantee $\sigma_{S_1, p} \leq \sqrt{\min\set{m,n}} \sigma_{F, p}$. 
We therefore empirically calculate the ratio $\nicefrac{\sigma_{S_1, p}}{\sigma_{F, p}}$ for $p = 1.5$ for all weight matrices trained by \muon, as shown in \Cref{fig:noise_ratios_layers}. 
We observe that, while non-trivial, the noise ratios are far below the worst-case factor $\sqrt{\min\set{m, n}} \approx 22.5$. Furthermore the ratio is significantly lower at initialisation than at the final checkpoint, which may further explain the observation that \muon\ performs particularly well at the start of training \citep{MuonScalableLLM2025liu}. 
We repeat the same experiment while adding the input and output embedding layer, but measure the noise in $\ell$-geometry, reporting a $[0,1]$ normalised ratio in \Cref{fig:embedding_plots}. Matrices that were empirically observed to be more efficiently trained by \muon\ have a lower Schatten ratio, while we observe that embeddings have a lower $\ell$ ratio. This possibly explains why embedding layers are better trained by \textsc{Adam}.
For additional details we refer to \Cref{sec:app.experimental_details} and \citet{ConvergenceAnalysisMuon2025shen}, who conduct similar experiments for the smoothness constant.

\begingroup
\setlength{\intextsep}{1mm}
\begin{wraptable}{r}{0.27\textwidth}
\vspace{-1mm}
\centering
\small
\caption{
Final perplexity for Schatten-\(r\) geometries.
}
\label{tab:schatten_comparison}
\vspace{-0.5em}
\renewcommand{\arraystretch}{1.05}
\begin{tabular}{@{}cc@{}}
\toprule
\(r\) & Perplexity \(\downarrow\) \\
\midrule
\(1\)               & \(88.89 \pm 0.48\) \\
\(\nicefrac{8}{7}\) & \(86.50 \pm 0.91\) \\
\(\nicefrac{4}{3}\) & \(67.61 \pm 5.98\) \\
\(\nicefrac{3}{2}\) & \(60.57 \pm 4.48\) \\
\(2\)               & \(40.18 \pm 0.09\) \\
\(3\)               & \(34.51 \pm 0.13\) \\
\(4\)               & \(33.54 \pm 0.04\) \\
\(8\)               & \(\mathbf{33.02 \pm 0.01}\) \\
\(\infty\)          & \(\underline{33.34 \pm 0.02}\) \\
\bottomrule
\end{tabular}
\vspace{-1mm}
\end{wraptable}

\textbf{Impact of Norm Choice.}
To confirm the theoretical insights from \Cref{thm:main_conv}, we compare \ourAlg\ in different Schatten-$r$ geometries, $r \in \set{1, \nicefrac 8 7, \nicefrac 4 3, \nicefrac 3 2, 2, 3, 4, 8, \infty}$. Here $r = 2$ corresponds to the standard Euclidean geometry, while $r = \infty$ corresponds to the spectral geometry of \muon. Our theory suggests that increasing $r$ should initially improve performance: larger $r$ corresponds to a stronger dual stationarity measure until $r \approx \nicefrac{p}{p-1}$. Beyond this point, the stronger stationarity competes with additional dimension dependence it introduces (\Cref{thm:convergence_nssdm_norm_equivalence_formulation}), so the performance may plateau or decrease.

This is precisely the behaviour observed in \Cref{tab:schatten_comparison}. Performance improves sharply from $r=1$ to $r=3$, after which it essentially plateaus. These results are consistent with a tail index of approximately $p = 1.5$ for the gradient noise. We also observe that the spectral geometry of \muon, corresponding to \(r=\infty\), substantially outperforms the Euclidean geometry, in line with \Cref{thm:muon_convergence}. Finally, although intermediate Schatten geometries perform competitively, they require a full SVD, whereas \muon\ can use Newton--Schulz iterations. Hence \muon, which automatically adapts to the optimal geometry while being computationally efficient, is still preferable.

\begin{figure}[t]
    \centering
    \begin{subfigure}[t]{0.49\textwidth}
        \centering
        \includegraphics[width=\textwidth]{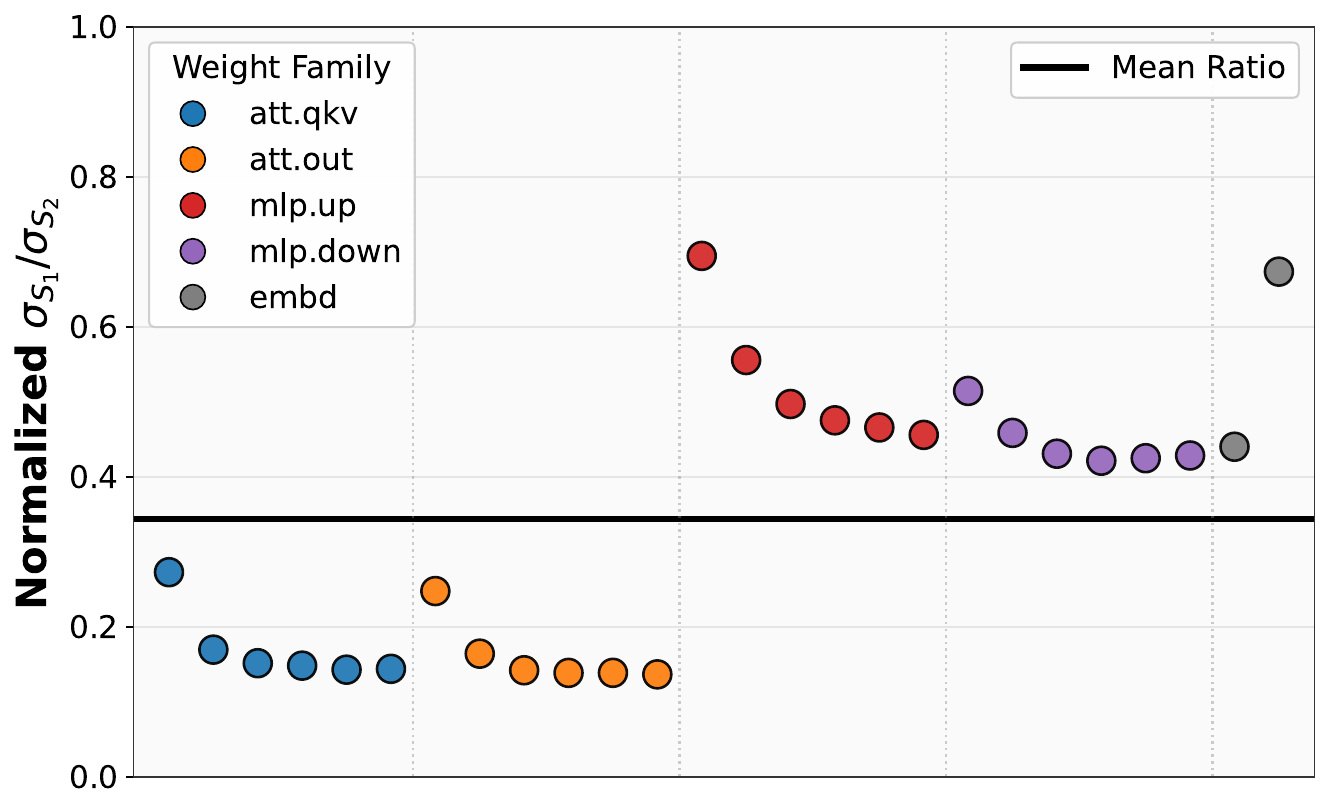}
        \caption{Ratios in $S_1$.}
        \label{fig:embedding_plots.Schatten}
    \end{subfigure}
    \hfill
    \begin{subfigure}[t]{0.49\textwidth}
        \centering
        \includegraphics[width=\textwidth]{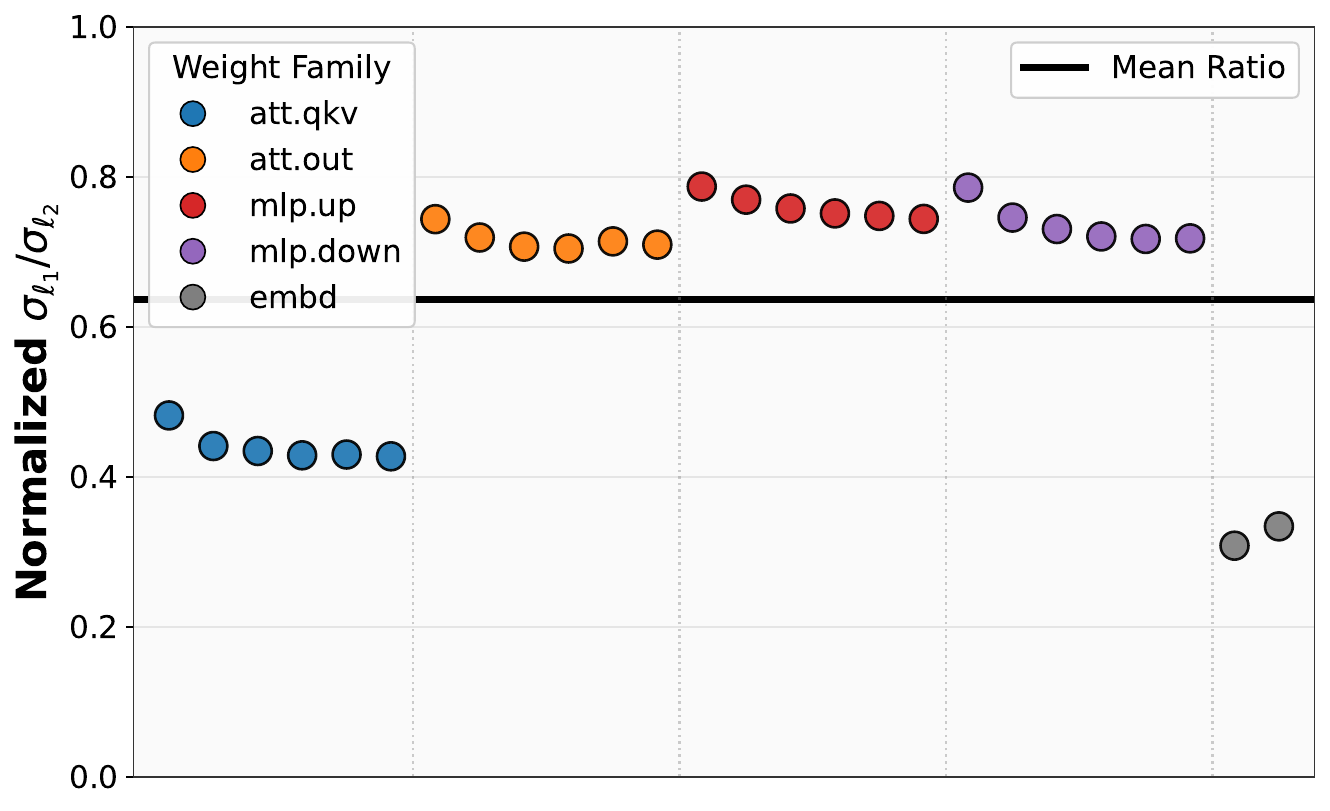}
        \caption{Ratios in $\ell_1$.}
        \label{fig:embedding_plots.ell}
    \end{subfigure}
    \caption{Normalised noise ratios including the embedding and unembedding layer at initialisation. A normalised value of $0$ corresponds to $\sigma_1 = \sigma_2$, a value of $1$ corresponds to $\sigma_1 = \rho\sigma_2$, where $\rho$ is the worst-case ratio. Interestingly, we find that the ratio is significantly smaller for the embedding and unembedding layers for $\sigma_{\ell_1}/\sigma_{\ell_2}$ possibly explaining why the $\ell_\infty$-norm is favourable over Schatten-$\infty$ norm in practice for those layers.\vspace{-5mm}}
    \label{fig:embedding_plots}
\end{figure}

\endgroup
\section{Related Work}
\label{sec:related}

\textbf{Steepest Descent in General Norms.} While classical \textsc{Sgd} can be seen as steepest descent in the Euclidean $\ell_2$ norm, early works have proposed to consider general-norm (normalised) steepest descent algorithms \citep[Section 9.4]{ConvexOptimization2004boyd}. In particular, \citet{AlmostLinearTimeAlgorithmApproximate2014kelner} proposed considering $\ell_\infty$-steepest descent for specific graph tasks, and \citet{StochasticSpectralDescent2015carlson} suggest considering $S_\infty$-steepest descent for Restricted Boltzmann Machines and Deep Learning \citep{IntroCD2015Carlson}.
Independently, \citet{OrthogonalisingGradientsSpeed2022tuddenham} proposed normalised $S_\infty$-steepest descent with momentum for machine learning tasks under the name \textsc{Orthogonal-Sgdm}. This algorithm first orthogonalises each stochastic gradient and then applies momentum to the resulting orthogonalised directions. Neither approach was widely adopted until recently, when \citet{MUON2024Bernstein} proposed using Newton-Schulz iterations  \citep{NewtonSchulz1970Kovarik} to efficiently compute the orthogonalisation step. The blog \citep{MuonOptimizerHidden2024jordan} popularised the normalised $S_\infty$-steepest descent approach under the name \muon, by changing the order of orthogonalisation and momentum in \textsc{Orthogonal-Sgdm}. 
Recently, \citet{SCION2025pethick} and \citet{MuonScalableLLM2025liu} suggest \muon\ with decoupled weight decay as a natural extension from the theoretical and empirical perspective respectively.

\textbf{Theory of \muon.}
Motivated by the strong empirical performance of \muon, a rapidly growing line of literature studies its convergence properties. \citet{NoteConvergenceMuon2025li} and \citet{ConvergenceAnalysisMuon2025shen} analyse \muon\ under $L$-smoothness and Euclidean bounded-variance, and later relate its behaviour to low-rank and approximately blockwise-diagonal Hessian structure.
Going beyond the $S_\infty$ geometry of \muon, \citet{SCION2025pethick,UnderstandingGradientOrthogonalization2025kovaleva} linked this family of algorithms to linear minimisation oracle (lmo) based algorithms, and establish convergence for arbitrary norms in finite-dimensional spaces.
Guarantees for \muon\ with decoupled weight decay \citep{SCION2025pethick} and extensions to Nesterov momentum \citep{MuonOptimizesSpectral2025chena,LionsMuonsOptimization2025sfyraki} draw a close connection to stochastic conditional gradient \citep{StochasticConditionalGradient2020mokhtari} and Lion-$\mathcal K$.
While the above works make their noise assumptions in the Euclidean space, \citet{NonEuclideanSGDStructured2025kovalev} consider the weighted-Euclidean assumption $\E[\norm{\nf(x, \xi) - \nFx}_{\Sigma^{-1}}^{\smash{\raisebox{-0.25ex}{$\scriptstyle 2$}}}] \leq \nucnorm{\Sigma}$, where $\Sigma$ is a self-adjoint positive definite operator and $\norm{v}_{\Sigma^{-1}}^{\smash{\raisebox{-0.25ex}{$\scriptstyle 2$}}} = {v^{\smash{\raisebox{-0.25ex}{$\scriptstyle \top$}}} \Sigma^{\smash{\raisebox{-0ex}{$\scriptstyle -1$}}} v}$.

\textbf{Heavy-Tailed Noise.} A separate line of work suggests that bounded variance may be too restrictive for modern deep learning, and instead suggests studying stochastic gradients with bounded $p$-th central moments for $p \in (1,2]$ \citep{WhyAreAdaptive2020zhang,ProblemComplexityMethod1983nemirovskij,ProximalPolicyOptimizations2021garg}. In Euclidean geometry, a large body of work establishes convergence of gradient-clipping based methods under heavy-tailed noise \citep[e.g.,][]{WhyAreAdaptive2020zhang,HighProbabilityBoundsStochastic2023sadiev,HighprobabilityBoundsNonConvex2021cutkosky}. Recent works establish optimal sample complexities for \ourAlg\ without the need for gradient-clipping under heavy-tailed noise \citep{GradientClippingNormalization2024hubler,NonconvexStochasticOptimization2025Liu}. 
Beyond Euclidean spaces, \citet{HighprobabilityBoundsNonConvex2021cutkosky} provide guarantees for $2$-uniformly convex Banach spaces, but require an additional gradient clipping step. This extension covers Schatten-$r$ norms for $r \in (1,2]$, but not the Schatten-$\infty$ geometry required for \muon. Complementary literature examines heavy-tailed \emph{data} distributions \citep{kunstner2026scaling,kim2026sharp}.

\textbf{Concurrent Work.} While preparing this paper, we became aware of the concurrent preprint \citep{ConvergenceMuonNonconvex2026zhang}. The authors analyse \muon\ under heavy-tailed noise and ($L_0, L_1$)-smoothness, where both assumptions are stated in the Frobenius norm. This results in a $\Oc\pare{\rho^{\frac{3p-2}{p-1}} \frac{\Dz L}{\eps^2} \pare{\frac{\euclvar}{\eps}}^{\frac{p}{p-1}}}$ convergence guarantee, which does not improve over the stationarity adjusted guarantee of Euclidean algorithms (see \Cref{eq:euclidean_stationarity_adjusted_guarantee}).
In contrast, our work considers general norms beyond $S_\infty$, assumptions stated in the native optimisation geometry and obtains a strictly improved dimension dependence. Additionally we provide lower-bounds showing that our dimension dependence is first-order optimal.

\section{Conclusion and Limitations}
\label{sec:conclusion}

We provide convergence guarantees for normalised steepest descent with momentum in general Banach spaces under heavy-tailed noise. 
In particular, for \muon, we obtain a nuclear-stationarity guarantee with substantially improved dimension dependence compared to the corresponding stationarity-adjusted Euclidean guarantees. We further prove that the remaining dimension dependence is first-order optimal and therefore cannot be removed.
Our experiments connect these results to empirical observations in language-model training: varying the Schatten-$r$ descent geometry produces the predicted improvement from Euclidean geometry to larger $r$, and weight matrices trained by MUON exhibit favourable Schatten noise ratios, while embeddings appear better aligned with an $\ell_\infty$-type geometry.

Our work has several limitations. The empirical evaluation is restricted to a 70M-parameter model trained on a single dataset, so the observed phenomena should be validated across larger variants. On the theoretical side, our guarantees are in expectation, and extending them to high-probability bounds under heavy-tailed noise remains open. Finally, the analysis studies idealised normalised steepest-descent updates, whereas practical \muon\ implementations rely on approximate Newton–Schulz iterations.

\acksection
FH gratefully acknowledges financial support from the ETH research grant and Swiss National Science Foundation (SNSF) Project Funding No.~200021-207343, the Alexander von Humboldt Foundation, and the European Union’s Horizon Europe research and innovation programme under grant agreements No.~101120237 (ELIAS) and No.~101070617 (ELSA). Views and opinions expressed are however those of the author only and do not necessarily reflect those of the European Union or European Commission. Neither the European Union nor the granting authority can be held responsible for them.
SS gratefully acknowledges generous support from the Alexander von Humboldt Foundation.

\printbibliography

\clearpage
\appendix
\tableofcontents
\clearpage
\section{Technical Lemmas}
\label{sec:app.technical}

\begin{lemma}[{\cite[see][Equation (167)]{NonlinearSpectralCalculus2014mendel}}]\label{lem:make_smoothness_weaker}
    Let $(X, \norm \cdot)$ be a Banach space.
    \begin{enumerate}[label=\roman*)]
        \item For all $1 < p' \leq p \leq 2$ we have $S_{p'}(X) \leq S_p(X)$.
        \item For all $2 \leq q \leq q' < \infty$ we have $K_{q'}(X) \leq K_q(X)$.
    \end{enumerate}
\end{lemma}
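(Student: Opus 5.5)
The claim is monotonicity of the smoothness and convexity constants in the exponent, and I would prove part (i) directly from \Cref{def:constants}. Part (ii) then follows either by the same argument or by duality via \Cref{lem:convexity_smoothness_duality}.

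\textbf{Reduction to $p$-uniform smoothness.} Fix $1 < p' \le p \le 2$ and let $S = S_p(X)$. We may assume $S < \infty$, otherwise there is nothing to show. For all $x,y$ we have
\[
  \norm{x+y}^p + \norm{x-y}^p \le 2\norm{x}^p + 2S^p\norm{y}^p .
\]
The goal is the same inequality with $p$ replaced by $p'$ and the same constant $S$.

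\textbf{Key step.} The standard route goes through the \emph{integral/averaged} formulation of uniform smoothness. The condition at exponent $p$ is equivalent to the statement that for every $x$ and every $y$, the function
\[
  \phi(t) = \tfrac12\bigl(\norm{x+ty}^p + \norm{x-ty}^p\bigr)
\]
satisfies $\phi(t) \le \norm{x}^p + S^p t^p \norm{y}^p$. To pass to $p'$, I would use the elementary inequality that for $a,b\ge0$ and $0<\theta = p'/p \le 1$, the map $u\mapsto u^\theta$ is concave. By Jensen,
\[
  \tfrac12\bigl(\norm{x+y}^{p'} + \norm{x-y}^{p'}\bigr)
  \le \Bigl(\tfrac12(\norm{x+y}^p+\norm{x-y}^p)\Bigr)^{\theta}
  \le \bigl(\norm{x}^p + S^p\norm{y}^p\bigr)^{\theta}.
\]
This is not yet $\le \norm{x}^{p'} + S^{p'}\norm{y}^{p'}$, since $(a+b)^\theta \le a^\theta + b^\theta$ is fine for the right side. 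Indeed subadditivity of $u^\theta$ gives exactly
\[
  \bigl(\norm{x}^p + S^p\norm{y}^p\bigr)^{\theta} \le \norm{x}^{p'} + S^{p'}\norm{y}^{p'} .
\]
So the chain closes: Jensen followed by subadditivity of $u\mapsto u^{p'/p}$ yields $S_{p'}(X)\le S$. Taking the infimum over admissible $S$ gives (i).

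\textbf{Part (ii).} For $2\le q\le q'$, set $\theta = q/q' \le 1$. From the $q$-convexity inequality with constant $K$,
\[
  \norm{x}^{q} + K^{-q}\norm{y}^{q} \le \tfrac12\bigl(\norm{x+y}^q+\norm{x-y}^q\bigr).
\]
I want $\norm{x}^{q'} + K^{-q'}\norm{y}^{q'} \le \tfrac12(\norm{x+y}^{q'}+\norm{x-y}^{q'})$. Raising to the power $1/\theta = q'/q \ge 1$ uses superadditivity of $u\mapsto u^{q'/q}$ on the left, giving $(a+b)^{q'/q}\ge a^{q'/q}+b^{q'/q}$. On the right it uses convexity (Jensen), giving $\bigl(\tfrac12(A+B)\bigr)^{q'/q}\le \tfrac12(A^{q'/q}+B^{q'/q})$. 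Both go in the required direction, so $K_{q'}(X)\le K$.

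Alternatively, (ii) follows from (i) applied to $X^*$ together with \Cref{lem:convexity_smoothness_duality}, since conjugate exponents reverse order. That route needs reflexivity, so the direct argument is cleaner.

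\textbf{Main obstacle.} The only delicate point is checking that the two power inequalities (Jensen and sub/superadditivity) point the same way in each case. Everything else is routine. One should also handle the $\ge1$ constraint in the infimum, which is harmless.
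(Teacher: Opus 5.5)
Your proof is correct. It differs from the paper only in that the paper gives no argument and just cites Mendel--Naor, whereas you supply a self-contained one.

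Without the detours, both parts reduce to two elementary facts on $\mathbb{R}^2$. The first is the power-mean inequality $\bigl(\tfrac{a^s+b^s}{2}\bigr)^{1/s}\le\bigl(\tfrac{a^t+b^t}{2}\bigr)^{1/t}$ for $s\le t$, which is your Jensen step. The second is $(a^t+b^t)^{1/t}\le(a^s+b^s)^{1/s}$ for $s\le t$, which is your sub-/superadditivity step. For (i) these give the chain
\[
\Bigl(\tfrac{\|x+y\|^{p'}+\|x-y\|^{p'}}{2}\Bigr)^{1/p'}
\le \Bigl(\tfrac{\|x+y\|^{p}+\|x-y\|^{p}}{2}\Bigr)^{1/p}
\le \bigl(\|x\|^{p}+S^{p}\|y\|^{p}\bigr)^{1/p}
\le \bigl(\|x\|^{p'}+S^{p'}\|y\|^{p'}\bigr)^{1/p'},
\]
and (ii) is the mirror image, applied to the pair $(\|x\|,K^{-1}\|y\|)$. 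So every admissible constant for $p$ (resp.\ $q$) is admissible for $p'$ (resp.\ $q'$), the normalisation $S,K\ge 1$ is unaffected, and the inequality between the infima follows.

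Two of your side remarks are superfluous or slightly off:
\begin{itemize}
\item The reformulation through $\phi(t)$ is just homogeneity and is never used.
\item The duality route needs no extra reflexivity hypothesis. The duality lemma is stated for arbitrary Banach spaces. Moreover, finiteness of $K_q(X)$ already forces uniform convexity, and hence reflexivity by Milman--Pettis.
\end{itemize}
Your direct argument is still the better choice: it avoids the nontrivial duality lemma altogether and holds verbatim in any normed space.
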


\begin{lemma}[see \cite{matrixAnalysis2013bhatia}]
\label{lem:schatten_norm_equivalence_constants}
    Let $1 \leq p \leq q \leq \infty$. Then the Schatten-$p$ and Schatten-$q$ norms satisfy 
    \begin{equation*}
        \norm{A}_{S_q} \leq \norm{A}_{S_p} \leq \rho \norm{A}_{S_q},
    \end{equation*}
    where $\rho = \min\set{m, n}^{\frac 1 p - \frac 1 q}$, for all $A \in \R^{m \times n}$.
\end{lemma}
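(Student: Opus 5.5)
The plan is to reduce both inequalities to statements about finite-dimensional $\ell$-norms of the singular value vector. Write $\kappa \coloneqq \min\set{m,n}$ and take a singular value decomposition $A = U \Sigma V^\top$ with singular values $\sigma_1 \geq \dots \geq \sigma_\kappa \geq 0$. By definition $\norm{A}_{S_r} = \norm{\sigma}_{\ell_r}$ for every $r \in [1,\infty]$, where $\sigma = (\sigma_1, \dots, \sigma_\kappa) \in \R^\kappa$ and $\norm{\sigma}_{\ell_\infty} = \sigma_1 = \opnorm{A}$. It therefore suffices to show, for every $\sigma \in \R^\kappa_{\geq 0}$,
\begin{equation*}
    \norm{\sigma}_{\ell_q} \leq \norm{\sigma}_{\ell_p} \leq \kappa^{\frac 1 p - \frac 1 q} \norm{\sigma}_{\ell_q}.
\end{equation*}
The dimension factor is $\kappa$ and not $mn$, because $A$ has at most $\kappa$ singular values.

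\emph{Left inequality.} This is monotonicity of $\ell_r$-norms in $r$. If $\sigma = 0$ there is nothing to show. Otherwise both sides are homogeneous of degree one, so we may normalise to $\norm{\sigma}_{\ell_p} = 1$. Then $\sigma_i \leq 1$ for every $i$, so $\sigma_i^q \leq \sigma_i^p$ for finite $q \geq p$. Summing gives $\norm{\sigma}_{\ell_q}^q \leq 1$, hence $\norm{\sigma}_{\ell_q} \leq 1$. For $q = \infty$ the claim is immediate, since $\max_i \sigma_i \leq \norm{\sigma}_{\ell_p} = 1$.

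\emph{Right inequality, $q = \infty$.} We have $\sum_i \sigma_i^p \leq \kappa \max_i \sigma_i^p$. Taking $p$-th roots gives $\norm{\sigma}_{\ell_p} \leq \kappa^{1/p} \norm{\sigma}_{\ell_\infty}$.

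\emph{Right inequality, finite $q > p$.} Apply Hölder's inequality to $\sum_i \sigma_i^p \cdot 1$ with exponents $\nicefrac q p$ and its conjugate $\nicefrac{q}{q-p}$. This yields
\begin{equation*}
    \sum_{i=1}^\kappa \sigma_i^p \leq \pare{\sum_{i=1}^\kappa \sigma_i^q}^{\nicefrac p q} \kappa^{1 - \nicefrac p q}.
\end{equation*}
Taking $p$-th roots gives the factor $\kappa^{\frac 1 p - \frac 1 q}$.

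The case $p = q$ is trivial. No step is a real obstacle. The only points needing care are the endpoint $q = \infty$ and the observation that Schatten norms depend only on singular values, which justifies the reduction to $\R^\kappa$ with $\kappa = \min\set{m,n}$.
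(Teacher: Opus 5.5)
Your proof is correct. You reduce to the singular value vector in dimension $\min\{m,n\}$, then use monotonicity of $\ell_r$-norms for the left inequality and H\"older's inequality with exponents $q/p$ and $q/(q-p)$ (plus the direct bound for $q=\infty$) for the right inequality. This is the standard argument; the paper gives no proof of its own and simply cites Bhatia's \emph{Matrix Analysis}, where the result follows essentially this way.
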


\section{\texorpdfstring{$(L_0, L_1)$}{(L0, L1)}-Smoothness}
\label{sec:app.lzlo}
We will derive our results for the weaker smoothness notion of $(L_0, L_1)$-smoothness. This section contains the related work, and required definitions and properties of this smoothness notion in general Banach spaces.

\subsection{Related Work on \texorpdfstring{$(L_0, L_1)$}{(L0, L1)}-Smoothness}
\label{sec:app.lzlo.related_work}

Motivated by observations in language modelling, \citet{WhyGradientClipping2020zhang} propose considering $(L_0,L_1)$-smoothness, allowing the smoothness to grow affinely with the gradient norm. A long line of works analyse diverse algorithms in this setting \citep[e.g.,][]{RobustnessUnboundedSmoothness2022crawshaw,UniformSmoothnessStopped2023faw,ConvergenceAdamRelaxed2023li,chezhegov2025convergence}. In Euclidean geometry, \citet{ImprovedAnalysisClipping2020zhang,ConvergenceImprovementStochastic2021zhao} prove the convergence of \ourAlg, and \citet{NonconvexStochasticOptimization2025Liu} extend this result to the heavy-tailed setting. Beyond Euclidean geometry, \citet{GeometrySignGradient2020balles} provide guarantees for the $\ell_\infty$-geometry in the deterministic setting, and \citet{GeneralizedGradientNorm2025pethick} provide convergence guarantees in arbitrary finite-dimensional spaces in the stochastic setting by adding gradient-clipping, while \citet{GluonMakingMuon2025riabinin}\footnote{We note that this preprint has a typo. While their variance assumption (Assumption 2) is stated in the dual norm, the authors use the inner product structure of the Frobenius norm in the proof of Theorem 2. In particular, a fix requires switching to $\euclvar$ and introduces a $\rho^2$ dependence.} concurrently prove convergence without the clipping step.

\subsection{Properties of \texorpdfstring{$(L_0, L_1)$}{(L0, L1)}-Smoothness in Banach Spaces}
\label{sec:app.lzlo.properties}

\begin{definition}[$(L_0, L_1)$-smoothness]\label{def:lzlo}
    Let $L_0, L_1 \geq 0$. A function $F \colon \Bc \to \mathbb{R}$ is said to be \emph{$(L_0, L_1)$-smooth} if it is differentiable and, for all $x,y \in \Bc$,
    \begin{equation*}
        \dualnorm{\nabla F(x) - \nabla F(y)} \leq (L_0 + L_1 \dualnorm{\nFx}) A(L_1 \norm{x-y}) \norm{x-y},
    \end{equation*}
    where $A(c) \coloneqq \frac{e^c - 1}c$.
\end{definition}

\begin{lemma}[Smoothness Upper-Bound]\label{lem:lzlo_upper_bound}
    Let $F \colon \Bc \to \mathbb{R}$ be an $(L_0, L_1)$-smooth function. Then, for all $x,y \in \Bc$,
    \begin{equation*}
        F(y) \leq F(x) + \langle \nabla F(x), y - x \rangle + \pare{L_0 + L_1 \dualnorm{\nFx}} B(L_1 \norm{x-y}) \norm{x-y}^2,
    \end{equation*}
    where $B(c) = \frac{e^c - 1 - c}{c^2}$.
\end{lemma}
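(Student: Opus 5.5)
The natural route is integration along the segment, in the spirit of the Euclidean descent lemma, with the $(L_0,L_1)$ bound of \Cref{def:lzlo} used pointwise along the path.

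\textbf{Reduction to the segment.} Fix $x,y\in\Bc$, set $d\coloneqq y-x$ and $r\coloneqq\norm{d}$; the case $r=0$ is trivial, so assume $r>0$. Define $\phi(s)\coloneqq F(x+sd)$ for $s\in[0,1]$. Since $F$ is Fréchet differentiable, $\phi$ is differentiable with $\phi'(s)=\angles{\nabla F(x+sd),d}$. By \Cref{def:lzlo} applied to the pair $(x, x+sd)$, the derivative $\phi'$ is continuous. Hence the fundamental theorem of calculus gives
\begin{equation*}
F(y)-F(x)-\angles{\nFx,d}=\int_0^1\angles{\nabla F(x+sd)-\nFx,d}\,ds .
\end{equation*}

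\textbf{Bounding the integrand.} By the definition of the dual norm, $\abs{\angles{v,d}}\le\dualnorm{v}\norm{d}$. Applying \Cref{def:lzlo} with the pair $(x,x+sd)$ then yields
\begin{equation*}
\angles{\nabla F(x+sd)-\nFx,d}\le (L_0+L_1\dualnorm{\nFx})\,A(L_1 s r)\,s\,r^2 .
\end{equation*}
It is important that the gradient norm appearing in the bound is the one at the base point $x$, which is exactly the point the definition uses. It therefore suffices to show that
\begin{equation*}
\int_0^1 A(cs)\,s\,ds = B(c), \qquad c\coloneqq L_1 r .
\end{equation*}

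\textbf{The scalar identity.} For $c>0$ we have $A(cs)\,s=(e^{cs}-1)/c$, so
\begin{equation*}
\int_0^1 A(cs)\,s\,ds=\frac{1}{c}\pare{\frac{e^c-1}{c}-1}=\frac{e^c-1-c}{c^2}=B(c).
\end{equation*}
When $L_1=0$, we interpret $A(0)=1$ and $B(0)=1/2$ by continuity. The integral then equals $1/2$, recovering the standard $L$-smooth bound with constant $L_0/2$. Combining the three steps gives the claim.

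\textbf{Main obstacle.} The only delicate points are justifying the fundamental theorem of calculus in a Banach space and handling the $c=0$ convention. Both are routine: $\phi$ is a real-valued function of one variable with continuous derivative, so the argument reduces entirely to the scalar case.
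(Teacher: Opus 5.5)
Your argument is correct and follows the paper's route: the paper simply defers to the proof of Lemma~2.5 of Vankov et al., which integrates the gradient-difference bound along the segment and evaluates $\int_0^1 A(cs)\,s\,ds = B(c)$, and this carries over verbatim to Banach spaces via $\angles{v,d}\le\dualnorm{v}\norm{d}$. One small fix: continuity of $\phi'$ at $s$ comes from applying the definition to the pair $(x+sd,\,x+s'd)$ with $s'\to s$, not to $(x,\,x+sd)$; the latter only gives boundedness of $\phi'$, although boundedness alone already suffices for the Lebesgue version of the fundamental theorem of calculus.
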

\begin{proof}
    This directly follows with the same proof as \citep[Lemma 2.5]{OptimizingL0L1SmoothFunctions2024vankov} by noting that all used inequalities also hold in general Banach spaces.
\end{proof}

\section{Proofs of Main Results}
\label{sec:app.proofs}

\subsection{Proofs of \texorpdfstring{\Cref{sec:main.general_norms}}{Section 3.1}}
\label{sec:app.upper_bounds.general}

This section contains generalisations of the main results from \Cref{sec:main} and their proofs. We will derive all results for the weaker $(L_0, L_1)$-smoothness notion popularised by \citet{WhyGradientClipping2020zhang}.

\begin{assum}[$(L_0, L_1)$-smoothness]\label{assum:lzlosmooth}
    We assume the objective function $F$ is $(L_0, L_1)$-smooth.
\end{assum}
\newcommand{\lzlosmooth}{\nameeqref{assum:lzlosmooth}}

We first derive a slightly improved descent lemma over \citep{GeneralizedGradientNorm2025pethick}.

\begin{lemma}[Descent Lemma]\label{lem:descent_lemma}
    Suppose \Cref{assum:lower,assum:lzlosmooth} hold. Furthermore let $T \in \Ngeq$ and consider the iterates $(\xt)$ generated by \ourAlg. Then
    \begin{equation*}
        \iSum \pare{\sst - L_1 \sst^2 B_t} \nnFxt_* 
        \leq 
        \Delta_1
        + L_0 \iSum \sst^2 B_t
        + 2 \iSum \sst \norm{\nFxt - \mt}_*,
    \end{equation*}
    where $B_t \coloneqq B(L_1 \sst)$, $B(c) = \frac{e^c - 1 - c}{c^2}$.
\end{lemma}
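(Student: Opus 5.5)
The plan is to apply \Cref{lem:lzlo_upper_bound} with $x = \xt$ and $y = \xtp = \xt + \sst d_t$, where $d_t \coloneqq \argmin_{\norm{d}\leq 1}\angles{\mt, d}$. Because $\norm{d_t} \le 1$, we have $\norm{\xtp - \xt} \le \sst$. The functions $c \mapsto B(c)$ and $c \mapsto B(c)c^2$ are nondecreasing on $[0,\infty)$, which follows from the power series $B(c)=\sum_{k\ge 0} c^k/(k+2)!$. Hence
\begin{equation*}
    F(\xtp) \le F(\xt) + \sst \angles{\nFxt, d_t} + \pare{L_0 + L_1 \dualnorm{\nFxt}} B_t \sst^2 .
\end{equation*}
This uses only the smoothness upper bound, so no assumption on $\Bc$ beyond being a Banach space is needed.

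Next comes the standard lmo estimate for the linear term. Split it as $\angles{\nFxt, d_t} = \angles{\mt, d_t} + \angles{\nFxt - \mt, d_t}$. By the definition of the dual norm together with the lmo, $\angles{\mt, d_t} = -\dualnorm{\mt}$; the infimum is attained in finite dimensions, and in general one can take any minimiser, which is all the algorithm presupposes. The second piece satisfies $\angles{\nFxt - \mt, d_t} \le \dualnorm{\nFxt - \mt}$. The triangle inequality gives $-\dualnorm{\mt} \le -\dualnorm{\nFxt} + \dualnorm{\nFxt - \mt}$. Together these yield
\begin{equation*}
    \angles{\nFxt, d_t} \le -\dualnorm{\nFxt} + 2\dualnorm{\nFxt - \mt}.
\end{equation*}

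Substituting this into the previous display and moving the gradient-norm term to the left-hand side gives
\begin{equation*}
    \pare{\sst - L_1\sst^2 B_t}\dualnorm{\nFxt} \le F(\xt) - F(\xtp) + L_0 \sst^2 B_t + 2\sst \dualnorm{\nFxt - \mt}.
\end{equation*}
Summing over $t = 1, \dots, T$ telescopes the function values to $F(x_1) - F(x_{T+1}) \le F(x_1) - \inf F \le \Dz$ by \Cref{assum:lower}, which is the claim.

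The only delicate point is the monotonicity of $B(c)c^2$ needed to replace $\norm{\xtp-\xt}$ by $\sst$. If a strict lmo step with $\norm{d_t}=1$ is taken, this is not even required, so I do not expect a genuine obstacle. The improvement over \citep{GeneralizedGradientNorm2025pethick} lies only in keeping the exact $B(L_1\sst)$ factor rather than a cruder bound.
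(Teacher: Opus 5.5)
Your proof is correct and follows the paper's route. Like the paper, you apply \Cref{lem:lzlo_upper_bound} along the lmo step, bound $\angles{\nFxt, \lmo(\mt)} \le 2\dualnorm{\nFxt - \mt} - \dualnorm{\nFxt}$ via the dual-norm/lmo identity and the triangle inequality, then sum and telescope with \Cref{assum:lower}. Your monotonicity argument for replacing $\norm{\xtp - \xt}$ by $\sst$ is a small extra: the paper simply writes $\norm{\xtp - \xt} = \sst$, which fails only in the degenerate case $\mt = 0$.
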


\begin{proof} This proof generalises \citep[Lemma 12]{ParameterAgnosticOptimizationRelaxed2023hubler} to Banach spaces. Using \Cref{lem:lzlo_upper_bound} we get
    \begin{align*}
        F(\xtp) - F(\xt)
        &\leq \langle \nFxt, \xtp - \xt \rangle + \pare{L_0 + L_1 \norm{\nFxt}_*} B(L_1 \sst) \norm{\xtp - \xt}^2 \\
        &= \sst \langle \nFxt, \lmo(\mt) \rangle + \pare{L_0 + L_1 \norm{\nFxt}_*} B(L_1 \sst) \sst^2.
    \end{align*}
    Furthermore
    \begin{align*}
        \langle \nFxt, \lmo(\mt) \rangle
        &= \langle \nFxt - \mt, \lmo(\mt) \rangle + \langle \mt, \lmo(\mt) \rangle \\
        &\leq \dualnorm{\nFxt - \mt} \norm{\lmo(\mt)} - \dualnorm{\mt}\\
        &\leq 2 \dualnorm{\nFxt - \mt} - \dualnorm{\nFxt},
    \end{align*}
    where we used the definition of $\lmo$\ and the dual norm in the first, and the triangle inequality in the second step. Combining yields
    \begin{align*}
        F(\xtp) - F(\xt)
        &\leq \sst \left(2 \dualnorm{\nFxt - \mt} - \dualnorm{\nFxt}\right) + \pare{L_0 + L_1 \norm{\nFxt}_*} B(L_1 \sst) \sst^2\\
        &= \pare{L_1 \sst^2 B(L_1 \sst) - \sst}\dualnorm{\nFxt} + 2\sst \dualnorm{\nFxt - \mt} + L_0 B(L_1 \sst) \sst^2.
    \end{align*}
    Summing up and re-arranging finishes the proof.
\end{proof}

Next we derive the main technical lemma of our analysis, which bounds the deviation of the momentum from the true gradient for general norms, without requiring a conversion to the Euclidean norm.

\begin{lemma}\label{lem:mom:deviation_bound}
    Suppose \Cref{assum:noise,assum:lzlosmooth} hold and that $(\Bs, \dualnorm \cdot)$ is $p'$-uniformly smooth with smoothness constant $S_{p'}(\Bs) \leq S$. Then the iterates generated by \ourAlg\ satisfy
    \begin{equation*}
        \Exp{\dualnorm{\nFxt - \mt}} 
        \leq 
        \varsym S \pare{\sum_{\tau=1}^t b_{\tau,\ii}^r}^{1/r}
        + L_0 \sum_{\tau=2}^t c_{\tau,\ii} 
        + L_1 \sum_{\tau=2}^t c_{\tau,\ii} \Exp{\norm{\nF \pare{x_\tau}}_*},
    \end{equation*}
    where $r \coloneqq \min \set{p, p'}$ and, for easier readability, we define 
    \begin{align*}
        b_{1,\ii} \coloneqq \prod_{\kappa = 2}^\ii \iterationmom \kappa, \qquad \text{and for $\tau \geq 2$,} \qquad
        b_{\tau,\ii} &\coloneqq (1-\iterationmom \tau) \prod_{\kappa = \tau + 1}^\ii \iterationmom \kappa, \qquad 
        c_{\tau,\ii} \coloneqq \ssi_{\tau-1} A_{\tau - 1} \prod_{\kappa = \tau}^\ii \iterationmom \kappa,
    \end{align*}
    where $A_\tau = A(L_1 \iterationss \tau) = \frac{e^{L_1 \iterationss \tau} - 1}{L_1 \iterationss \tau}$. 
    In particular, for constant parameters $\sst \equiv \ssi$ and $\momt \equiv \beta$, we get
    \begin{equation*}
        \iSum \Exp{\dualnorm{\nFxt - \mt}}
        \leq
         \frac{\varsym S}{1-\beta}
        + \varsym S T (1-\beta)^{\frac{r-1}{r}}
        + L_0 \frac{\ssi A T}{1-\beta}
        + L_1 \frac{\ssi A}{1-\beta} \iSum \Exp{\norm{\nF \pare{\xt}}_*},
    \end{equation*}
    where $A \equiv A(L_1 \ssi)$.
\end{lemma}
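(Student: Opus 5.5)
We unroll the momentum recursion, split the error into a martingale part and a drift part, and bound the two parts separately.

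\textbf{Unrolling.} Write $\gat \coloneqq \nfxt - \nFxt$. From $\mt = \momt m_{t-1} + (1-\momt)\nfxt$ we get
\begin{equation*}
\mt - \nFxt = \momt\pare{m_{t-1} - \nF(x_{t-1})} + (1-\momt)\gat + \momt\pare{\nF(x_{t-1}) - \nFxt}.
\end{equation*}
Iterating down to $m_1 = \nabla f(x_1,\xi_1)$ gives
\begin{equation*}
\mt - \nFxt = \sum_{\tau=1}^t b_{\tau,t}\, \gamma_\tau + \sum_{\tau=2}^t \Big(\prod_{\kappa=\tau}^t \beta_\kappa\Big)\pare{\nF(x_{\tau-1}) - \nF(x_\tau)},
\end{equation*}
with $b_{\tau,t}$ exactly as in the statement. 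We apply the triangle inequality and take expectations.

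\textbf{Drift term.} Since $\norm{x_\tau - x_{\tau-1}} \le \eta_{\tau-1}$, \Cref{def:lzlo} gives
\begin{equation*}
\dualnorm{\nF(x_{\tau-1}) - \nF(x_\tau)} \le \pare{L_0 + L_1 \dualnorm{\nF(x_\tau)}} A(L_1\eta_{\tau-1})\,\eta_{\tau-1}.
\end{equation*}
Here we use the definition with base point $x_\tau$, which is legitimate because the definition is symmetric in the roles of $x$ and $y$. Multiplying by $\prod_{\kappa=\tau}^t\beta_\kappa$ produces exactly $c_{\tau,t}\pare{L_0 + L_1\dualnorm{\nF(x_\tau)}}$.

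\textbf{Martingale term.} The vectors $b_{\tau,t}\gamma_\tau$ form a martingale difference sequence in $\Bs$ with respect to $\mathcal F_\tau = \sigma(\xi_1,\dots,\xi_\tau)$: $x_\tau$ is $\mathcal F_{\tau-1}$-measurable, so unbiasedness in \Cref{assum:noise} applies. Using Jensen, then \Cref{thm:banach_martingale} with exponent $r$, then \Cref{assum:noise},
\begin{equation*}
\Exp{\dualnorm{\textstyle\sum_\tau b_{\tau,t}\gamma_\tau}} \le \Exp{\dualnorm{\textstyle\sum_\tau b_{\tau,t}\gamma_\tau}^r}^{1/r} \le S_r(\Bs)\pare{\textstyle\sum_\tau b_{\tau,t}^r\, \Exp{\dualnorm{\gamma_\tau}^r}}^{1/r} \le \sigma S\pare{\textstyle\sum_\tau b_{\tau,t}^r}^{1/r}.
\end{equation*}
Two bounds enter the last two steps. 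First, $S_r(\Bs) \le S_{p'}(\Bs) \le S$ by \Cref{lem:make_smoothness_weaker} since $r \le p'$. Second, $\Exp{\dualnorm{\gamma_\tau}^r} \le \pare{\Exp{\dualnorm{\gamma_\tau}^p}}^{r/p} \le \sigma^r$ by Jensen since $r \le p$. This proves the first display.

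\textbf{Constant parameters.} Now take $\eta_t \equiv \eta$ and $\beta_t \equiv \beta$, so $b_{1,t} = \beta^{t-1}$ and $b_{\tau,t} = (1-\beta)\beta^{t-\tau}$ for $\tau \ge 2$. By subadditivity of $x \mapsto x^{1/r}$,
\begin{equation*}
\pare{\textstyle\sum_\tau b_{\tau,t}^r}^{1/r} \le \beta^{t-1} + (1-\beta)\pare{1-\beta^r}^{-1/r} \le \beta^{t-1} + (1-\beta)^{(r-1)/r},
\end{equation*}
where the last step uses $1-\beta^r \ge 1-\beta$. Summing over $t$ gives $\sum_t \beta^{t-1} \le \frac{1}{1-\beta}$ and $T(1-\beta)^{(r-1)/r}$, i.e.\ the first two terms of the second display.

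For the drift we have $c_{\tau,t} = \eta A \beta^{t-\tau+1}$. Exchanging the order of summation,
\begin{equation*}
\sum_{t=1}^T \sum_{\tau=2}^t \beta^{t-\tau+1} w_\tau \le \frac{\beta}{1-\beta}\sum_\tau w_\tau \le \frac{1}{1-\beta}\sum_\tau w_\tau
\end{equation*}
for any nonnegative weights $w_\tau$. Applying this with $w_\tau = 1$ and with $w_\tau = \Exp{\dualnorm{\nF(x_\tau)}}$ gives the $L_0$ and $L_1$ terms.

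\textbf{Main obstacle.} The only delicate step is checking the martingale structure: the coefficients $b_{\tau,t}$ must be deterministic (true here, since the parameters are predetermined), and the martingale difference property must be verified with the correct filtration. Everything else is bookkeeping.
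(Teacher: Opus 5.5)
Your proof is correct and follows the paper's argument essentially step for step. It uses the same unrolled noise/drift decomposition, the same $(L_0,L_1)$ bound with base point $x_\tau$, the martingale inequality of \Cref{thm:banach_martingale} at exponent $r$ together with $S_r(\mathcal B^*)\le S_{p'}(\mathcal B^*)$, and the same geometric-series bookkeeping, and it spells out steps the paper leaves implicit: the filtration check, the Jensen step bounding the $r$-th noise moment by $\sigma^r$, and subadditivity of $x\mapsto x^{1/r}$.

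One wording quibble: the $(L_0,L_1)$ condition is not symmetric in $x$ and $y$. It does, however, hold for every ordered pair, and that is all you need to take $x = x_\tau$.
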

\begin{proof}
    First note that the standard decomposition of $\mu_t \coloneqq \mt - \nFxt$ into a noise and smoothness term \citep{MomentumImprovesNormalized2020cutkosky},
    \begin{equation*}
        \mu_t = 
        \sum_{\tau = 1}^t b_{\tau, \ii} \pare{\nf \pare{x_\tau, \xi_\tau} - \nF \pare{x_\tau}} 
        + \sum_{\tau = 2}^t \prod_{\kappa = \tau}^t \iterationmom \kappa \pare{\nF\pare{x_{\tau-1}}-\nF \pare{x_\tau} },
    \end{equation*}
    is norm independent and hence also holds in general Banach spaces. Next, we have
    \begin{equation*}
        \dualnorm{\nF\pare{x_\tau} - \nF\pare{x_{\tau-1}}}
        \leq \pare{L_0 + L_1 \norm{\nF \pare{x_\tau}}_*} A(L_1 \iterationss {\tau-1}) \norm{x_\tau - x_{\tau - 1}}
        \leq L_\tau A_{\tau-1} \iterationss{\tau - 1},
    \end{equation*}
    where we use the slight abuse of notation $L_\tau \coloneqq L_0 + L_1 \norm{\nF \pare{x_\tau}}_*$. Hence, by our definition of $b_{\tau, \ii}$ and $c_{\tau, \ii}$ in the statement, the triangle inequality yields
    \begin{align*}
        \Exp{\dualnorm{\mu_t}}
        &\leq
        \Exp{\dualnorm{\sum_{\tau=1}^t b_{\tau,\ii} \pare{\nf\pare{x_\tau,\xi_\tau}-\nF\pare{x_\tau}}}}
        + \sum_{\tau=2}^t \pare{\prod_{\kappa = \tau}^\ii \iterationmom \kappa} \Exp{\dualnorm{\nF\pare{x_{\tau-1}}-\nF\pare{x_\tau}}} \\
        &\leq
        \Exp{\dualnorm{\sum_{\tau=1}^t b_{\tau,\ii} \pare{\nf\pare{x_\tau,\xi_\tau}-\nF\pare{x_\tau}}}}
        + \sum_{\tau=2}^t c_{\tau,\ii} \Exp{L_\tau}.
    \end{align*}
    Now note that $(\nf\pare{x_\tau,\xi_\tau}-\nF\pare{x_\tau})_{\tau = 1}^t$ is a martingale difference sequence in $\Bs$, and, by \Cref{lem:make_smoothness_weaker}, $\Bs$ is also $r$-uniformly smooth with $S_r(\Bs) = S_{p'}(\Bs) \leq S$. Hence, we may apply \Cref{thm:banach_martingale} to get
    \begin{align*}
        \Exp{\dualnorm{\sum_{\tau=1}^t b_{\tau,\ii} \pare{\nf\pare{x_\tau,\xi_\tau}-\nF\pare{x_\tau}}}}
        &\leq
        \Exp{\dualnorm{\sum_{\tau=1}^t b_{\tau,\ii} \pare{\nf\pare{x_\tau,\xi_\tau}-\nF\pare{x_\tau}}}^r}^{\nicefrac 1 r} \\
        &\leq S
        \pare{\sum_{\tau=1}^t b_{\tau,\ii}^r
        \Exp{\dualnorm{\nf\pare{x_\tau,\xi_\tau}-\nF\pare{x_\tau}}^r}}^{1/r} \\
        &\leq \varsym S
        \pare{\sum_{\tau=1}^t b_{\tau,\ii}^r}^{1/r},
    \end{align*}
    where we used $r \leq p$ and \pBCM\ in the last step. For the constant parameter version note that
    \begin{align*}
        \sum_{\tau = 1}^t b_{\tau,\ii}^r
        &= 
        \beta^{r(t-1)} +
        (1-\beta)^r \sum_{\tau = 2}^t \beta^{r(t-\tau)}
        \leq \beta^{r(t-1)} + \frac{(1-\beta)^r}{1-\beta^r} 
        \leq \beta^{r(t-1)} + (1-\beta)^{r-1}\\
        \sum_{\tau = 2}^t c_{\tau,\ii}
        &= \ssi A \sum_{\tau = 1}^{t-1} \beta^\tau \leq \frac{\ssi A}{1-\beta},
    \end{align*}
    where $A \equiv A(L_1 \ssi)$, and hence
    \begin{align*}
        \varsym S \iSum \pare{\sum_{\tau=1}^t b_{\tau,\ii}^r}^{1/r}
        & \leq \varsym S\pare{\frac{1}{1-\beta} + T (1-\beta)^{\frac{r-1}{r}}} \\
        L_0 \iSum \sum_{\tau=2}^t c_{\tau,\ii}
        &\leq L_0 \frac{\ssi A T}{1-\beta} \\
        L_1 \iSum \sum_{\tau=2}^t c_{\tau,\ii} \Exp{\norm{\nF \pare{x_\tau}}_*}
        &= L_1 \sum_{\tau = 2}^\li \pare{\sum_{\ii = \tau}^\li c_{\tau, \ii}} \Exp{\dualnorm{\nF \pare{\iterationx \tau}}}\\
        &\leq L_1 \sum_{\tau = 2}^\li \frac{\ssi A}{1-\momentum}\Exp{\dualnorm{\nF \pare{\iterationx \tau}}}\\
        &\leq \frac{L_1 \ssi A}{1-\beta} \iSum \Exp{\norm{\nF \pare{\xt}}_*},
    \end{align*}
    Combining the three bounds finishes the proof.
\end{proof}

Finally we are able to derive the main convergence result for \ourAlg\ in general Banach spaces.

\begin{theorem}\label{thm:main_lzlo}
    Suppose \Cref{assum:lower,assum:lzlosmooth,assum:noise} hold and that $(\Bc, \norm \cdot)$ is $q$-uniformly convex with constant $K_q(\Bc) \leq K$. Furthermore let $p'$ be the conjugate exponent of $q$, i.e., $\nicefrac{1}{q} + \nicefrac{1}{p'} = 1$, and $r = \min\set{p, p'}$. Then the iterates generated by \ourAlg\ with 
    \begin{align*}
        \sst &\equiv \ssi = \min\set{\frac{1-\momentum}{5L_1}, \sqrt{\frac{\Delta_1(1-\momentum)}{L_0 T}}}, \\
        \momt &\equiv \momentum = 1 - \min\set{1, \max \set{
            \pare{\frac{\Dz L_0}{\varsym^2 K^2 T}}^{\frac{r}{3r-2}},
            \pare{\frac{5L_1\Delta_1}{2\varsym K T}}^{\frac{r}{2r-1}},
            \li^{-\frac{r}{2r-1}}
        }}
    \end{align*}
    satisfy
    \begin{align*}
        \frac 1 \li \iSum \Exp{\nnFxt_*} 
        \leq &\ 
        12 \frac{\Dz L_1}{\li} 
        + 9 \sqrt{\frac{\Dz L_0} \li} 
        + 14 \pare{\frac{\Dz L_1\pare{\sigma K}^{\frac r {r-1}}}{\li}}^{\frac{r-1}{2r-1}}\\
        &\ + 14\pare{\frac{\Dz L_0 (\varsym K)^{\frac r {(r-1)}}}{\li}}^{\frac{r-1}{3r-2}}
        + \frac{10 \varsym K}{\li^{\frac {r-1} {2r-1}}}.
    \end{align*}
    In particular, the sample complexity to reach an $\eps$-$\dualnorm \cdot$-stationary point is at most
    \begin{equation*}
        \Oc \pare{
            \frac{\Dz L_1}{\eps} 
            + \frac{\Dz L_0}{\eps^2} 
            + \frac{\Dz L_1}{\eps} \pare{\frac{\sigma K}{\eps}}^{\frac r {r-1}}
            + \frac{\Dz L_0}{\eps^2} \pare{\frac{\sigma K}{\eps}}^{\frac r {r-1}}
            + \pare{\frac{\varsym K}{\eps}}^{\frac{2r-1}{r-1}}
        }.
    \end{equation*}
\end{theorem}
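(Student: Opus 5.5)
We combine the descent lemma (\Cref{lem:descent_lemma}) with the momentum deviation bound (\Cref{lem:mom:deviation_bound}), and then tune the parameters. By \Cref{lem:convexity_smoothness_duality}, $q$-uniform convexity of $\Bc$ with $K_q(\Bc)\le K$ gives $p'$-uniform smoothness of $\Bs$ with $S_{p'}(\Bs)\le K$. Hence \Cref{lem:mom:deviation_bound} applies with $S=K$ and $r=\min\set{p,p'}$. For constant $\ssi,\momentum$ it yields
\begin{equation*}
\iSum \Exp{\dualnorm{\nFxt-\mt}} \le \frac{\varsym K}{1-\momentum} + \varsym K T(1-\momentum)^{\frac{r-1}{r}} + \frac{L_0 \ssi A T}{1-\momentum} + \frac{L_1\ssi A}{1-\momentum}\iSum \Exp{\dualnorm{\nFxt}}.
\end{equation*}
We take expectations in the descent lemma and substitute this bound.

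The next step is to absorb all gradient-norm terms into the left-hand side. The choice $\ssi \le \frac{1-\momentum}{5L_1}$ gives $L_1\ssi\le 1/5$. Hence $A=A(L_1\ssi)\le A(1/5)\le 1.11$ and $B\le B(1/5)\le 0.54$. The left coefficient $\ssi - L_1\ssi^2 B$ is then at least about $0.89\,\ssi$. The feedback term $2\ssi\cdot\frac{L_1\ssi A}{1-\momentum}$ is at most $2\ssi\cdot\frac{A}{5}\le 0.45\,\ssi$. A constant fraction of $\ssi\sum\Exp{\dualnorm{\nFxt}}$ therefore survives. Dividing by $c\,\ssi T$ for this constant $c$ gives
\begin{equation*}
\frac1T\iSum\Exp{\dualnorm{\nFxt}} \lesssim \frac{\Dz}{\ssi T} + L_0\ssi + \frac{L_0\ssi}{1-\momentum} + \frac{\varsym K}{(1-\momentum)T} + \varsym K(1-\momentum)^{\frac{r-1}{r}}.
\end{equation*}

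Finally we substitute the parameters; write $\alpha\coloneqq 1-\momentum$. There are two cases for $\ssi$.
\begin{itemize}
\item If $\ssi=\sqrt{\Dz\alpha/(L_0T)}$, the terms $\Dz/(\ssi T)$ and $L_0\ssi/\alpha$ both equal $\sqrt{\Dz L_0/(\alpha T)}$.
\item If $\ssi=\alpha/(5L_1)$, then $\Dz/(\ssi T)=5\Dz L_1/(\alpha T)$, and $L_0\ssi/\alpha$ is bounded by the other branch.
\end{itemize}
It remains to bound $\sqrt{\Dz L_0/(\alpha T)}$, $\Dz L_1/(\alpha T)$, $\varsym K(\alpha T)^{-1}$ and $\varsym K\alpha^{(r-1)/r}$. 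Use $\alpha=\min\set{1,\max\set{a_1,a_2,a_3}}$, where $a_1,a_2,a_3$ are the three candidate values. Each $a_i$ balances one of the first three terms against $\varsym K\alpha^{(r-1)/r}$:
\begin{itemize}
\item $a_1$ gives $(\Dz L_0(\varsym K)^{r/(r-1)}/T)^{(r-1)/(3r-2)}$;
\item $a_2$ gives $(\Dz L_1(\varsym K)^{r/(r-1)}/T)^{(r-1)/(2r-1)}$;
\item $a_3=T^{-r/(2r-1)}$ balances $\varsym K/(\alpha T)$, giving $\varsym K\,T^{-(r-1)/(2r-1)}$.
\end{itemize}
Since $\alpha\ge a_i$ for every $i$, the decreasing-in-$\alpha$ terms are bounded by their values at $a_i$. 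The increasing term $\varsym K\alpha^{(r-1)/r}$ is bounded by its value at whichever $a_i$ is the maximum. If the minimum with $1$ is active, i.e.\ $\alpha=1$, some $a_i\ge1$. Then the corresponding quantity, e.g.\ $\Dz L_0\ge\varsym^2K^2T$, makes the terms $\varsym K$ dominated by $\sqrt{\Dz L_0/T}$ or $\Dz L_1/T$; this gives the terms $12\Dz L_1/T$ and $9\sqrt{\Dz L_0/T}$. The sample complexity follows by inverting each of the five terms at level $\eps$.

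I expect the main obstacle to be this case analysis with explicit constants (12, 9, 14, 10). It requires tracking every branch of the min/max together with the constant factors from $A(1/5)$, $B(1/5)$ and the absorption step. Conceptually the argument is straightforward; the only nonstandard ingredient, the Banach martingale inequality, is already packaged in \Cref{lem:mom:deviation_bound}.
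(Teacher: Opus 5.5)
Your proposal is correct and follows the paper's proof essentially step for step: duality gives $S_{p'}(\Bs)\le K$, the descent lemma is combined with the deviation bound, the gradient-norm feedback is absorbed using $L_1\ssi/(1-\momentum)\le 1/5$ (the paper packages this as $L_1\ssi^2\bigl(B+\tfrac{2A}{1-\momentum}\bigr)\le\tfrac59\ssi$ via $B\le A/2$ and $A\le 10/9$), and each remaining term is balanced against one of $a_1,a_2,a_3$. The only cosmetic difference is in handling the $\min\{1,\cdot\}$: you split into cases on whether it is active, whereas the paper uses $1/\min\{1,\sqrt{a_1}\}\le 1+1/\sqrt{a_1}$ (and similarly for $a_2$) on the decreasing terms and $(1-\momentum)^{\frac{r-1}{r}}\le \sum_i a_i^{\frac{r-1}{r}}$ on the increasing one, which is exactly where the additive $9\sqrt{\Dz L_0/T}$ and $12\,\Dz L_1/T$ terms come from.
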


\begin{proof}
    Applying \Cref{lem:descent_lemma} yields
    \begin{align*}
        \ssi \iSum \Exp{\nnFxt_*}
        \leq&\  
        \Delta_1
        + L_0 \ssi^2 B \li
        + L_1 \ssi^2 B \iSum \Exp{\nnFxt_*}
        + 2 \iSum \sst \Exp{\norm{\nFxt - \mt}_*} \\
        \leq&\ 
        \Delta_1
        + L_0 \ssi^2 T \pare{B + \frac{2A}{1 - \momentum}}
        + 2 \varsym \ssi K \pare{\frac{1}{1-\momentum} + T (1-\momentum)^{\frac{r-1}{r}}}\\
        &\ 
        + L_1 \ssi^2 \pare{B + \frac{2A}{1-\momentum}} \iSum \Exp{\norm{\nF \pare{\xt}}_*}.
    \end{align*}
    Next we calculate 
    \begin{equation}\label{eq:technical_A_B_term_bound}
        L_1 \ssi^2 \pare{B + \frac{2A}{1-\momentum}}
        \leq \ssi \pare{\frac{L_1 \ssi }{1-\momentum} \frac{A(5 - \momentum)}{2}}
        {\leq \frac 5 9 \ssi}
    \end{equation}
    where we used $B \leq \nicefrac A 2$ in the first, and our choice of $\ssi$, which implies $\frac{L_1 \ssi}{1 - \beta} \leq \frac 1 5$ and $A \leq \nicefrac{10}9$, in the last inequality. Hence, re-arranging further yields
    \begin{align}\label{eq:plugged_in_eta}\begin{split}
        \frac 1 \li \iSum \Exp{\nnFxt_*}
        &\leq
        \frac{9\Delta_1}{4\ssi \li}
        + \frac{6 L_0 \ssi A}{(1 - \momentum)}
        + \frac{5 \varsym K }{T(1-\momentum)}
        + 5 \varsym K (1-\momentum)^{\frac{r-1}{r}}\\
        &\leq 9 \sqrt{\frac{\Dz L_0}{\li(1-\momentum)}} 
        + \frac{12 \Dz L_1}{T (1-\momentum)}
        + \frac{5 \varsym K }{T(1-\momentum)}
        + 5 \varsym K (1-\momentum)^{\frac{r-1}{r}}
    \end{split}\end{align}
    where we used that $1 / \min \set{a, b} \leq 1 / a + 1 / b$ and the definition of $\ssi$ in the last step. Next we upper bound the remaining $4$ terms by our choice of momentum. To simplify exposition, let us denote
    \begin{equation*}
        u \coloneqq \pare{\frac{\Dz L_0}{\varsym^2 K^2 T}}^{\frac{r}{3r-2}}, \qquad 
        v \coloneqq \pare{\frac{5L_1\Delta_1}{2\varsym K T}}^{\frac{r}{2r-1}}, \qquad 
        w \coloneqq \li^{-\frac r {2r-1}}
    \end{equation*}
    such that $1-\momentum = \min\set{1, \max\set{u,v,w}}$. First note that, due to $\frac 1 {\sqrt{1-\momentum}} \leq \frac 1 {\min \set{1, \sqrt{u}}} \leq 1 + \frac 1 {\sqrt u}$,
    \begin{equation*}
        9\sqrt{\frac{\Dz L_0}{T(1-\momentum)}} 
        \leq 9 \sqrt{\frac{\Dz L_0} \li} + 
        9\pare{\frac{\Dz L_0 (\varsym K)^{\frac r {(r-1)}}}{\li}}^{\frac{r-1}{3r-2}}.
    \end{equation*}
    For the second term we use the same argument as before, this time using $v$, to get
    \begin{equation*}
        \frac{12\Dz L_1}{T (1-\momentum)}
        \leq 12\frac{\Dz L_1}{\li} + 12 \pare{\frac 2 5}^{\frac r {2r-1}} \pare{\frac{ \Dz L_1\pare{\sigma K}^{\frac r {r-1}}}{\li}}^{\frac{r-1}{2r-1}}
        \leq 12\frac{\Dz L_1}{\li} + 7 \pare{\frac{ \Dz L_1\pare{\sigma K}^{\frac r {r-1}}}{\li}}^{\frac{r-1}{2r-1}}
    \end{equation*}
    For the third term note that $\frac 1 {1-\momentum} \leq \frac 1 w$ and hence
    \begin{equation*}
        \frac{5 \varsym K }{T(1-\momentum)}
        \leq \frac{5 \varsym K}{\li^{\frac {r-1} {2r-1}}}.
    \end{equation*}
    Finally, for the last term we use $(1-\momentum)^\alpha \leq u^\alpha + v^\alpha + w^\alpha$ to get
    \begin{align*}
        5 \varsym K (1-\momentum)^{\frac{r-1}{r}}
        &\leq 5 \varsym K \pare{\frac{\Dz L_0}{\varsym^2 K^2 \li}}^{\frac{r-1}{3r-2}}
        + 5 \varsym K \pare{\frac{5L_1\Delta_1}{2\varsym K \li}}^{\frac{r-1}{2r-1}}
        + 5 \varsym K \li^{-\frac{r-1}{2r-1}}\\
        &= 5 \pare{\frac{\Dz L_0 \pare{\varsym K}^{\frac r {r-1}}}{\li}}^{\frac{r-1}{3r-2}}
        + 5 \pare{\frac{5\Dz L_1 \pare{\varsym K}^{\frac r {r-1}}}{2\li}}^{\frac{r-1}{2r-1}}
        + 5 \frac{\varsym K}{\li^{\frac {r-1} {2r-1}}}
    \end{align*}
    Plugging these bounds into \eqref{eq:plugged_in_eta} and using $5 \pare{\frac 5 2}^{\frac{r-1}{2r-1}} \leq 7$ finishes the proof.
\end{proof}

\begin{remark}
    The above result can readily be extended to the parameter-agnostic setting by using the same arguments as in \citep{ParameterAgnosticOptimizationRelaxed2023hubler} to handle the non-constant parameters and error accumulation before $\sst \lesssim \nicefrac 1 {L_1}$.
\end{remark}

\subsection{Proofs of \texorpdfstring{\Cref{sec:main.muon}}{Section 3.2}}
\label{sec:app.proofs.muon}

In \Cref{thm:main_lzlo} we provide guarantees for arbitrary $p'$-uniformly smooth dual norms $\dualnorm \cdot$, but do not achieve optimal rates whenever $p' < p$. In this section we will use an orthogonal approach, where we instead assume that $\dualnorm \cdot \leq \rho \norm{\cdot}_p$ for some $p$-uniformly smooth norm $\norm \cdot _p$. Such a constant always exists in the finite dimensional case, and will allow us to achieve optimal rates. We first derive a general result for \ourAlg\ in \Cref{sec:app.proofs.muon.general_norm}, before specialising it to \muon\ in \Cref{sec:app.proofs.muon.muon}.

\subsubsection{Banach Space Setting}
\label{sec:app.proofs.muon.general_norm}

Therefore we first formulate \Cref{lem:mom:deviation_bound} for this approach.

\begin{lemma}\label{lem:mom:deviation_bound_norm_equivalence_formulation}
    Suppose \Cref{assum:noise,assum:lzlosmooth} hold, and that $\norm \cdot_p \leq \dualnorm \cdot \leq \rho \norm \cdot_p$, where $(\Bs, \norm \cdot_p)$ is $p$-uniformly smooth with smoothness constant $S_{p}(\Bs, \norm \cdot_p) \leq S$. Then the iterates generated by \ourAlg\ with constant parameters $\sst \equiv \ssi$ and $\momt \equiv \beta$ satisfy
    \begin{equation*}
        \iSum \Exp{\dualnorm{\nFxt - \mt}}
        \leq
        \frac{\rho \varsym S }{1-\beta}
        + \rho \varsym S T (1-\beta)^{\frac{p-1}{p}}
        + L_0 \frac{\ssi A T}{1-\beta}
        + L_1 \frac{\ssi A}{1-\beta} \iSum \Exp{\norm{\nF \pare{\xt}}_*},
    \end{equation*}
    where $A \equiv A(L_1 \ssi) = \frac{e^{L_1 \ssi} - 1}{L_1 \ssi}$. 
\end{lemma}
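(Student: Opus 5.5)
The plan is to reuse the proof of \Cref{lem:mom:deviation_bound} verbatim, changing only the treatment of the martingale noise term. We start from the norm-independent decomposition
\begin{equation*}
    \mt - \nFxt = \sum_{\tau=1}^t b_{\tau,\ii}\pare{\nf\pare{x_\tau,\xi_\tau} - \nF\pare{x_\tau}} + \sum_{\tau=2}^t \prod_{\kappa=\tau}^t \iterationmom\kappa \pare{\nF\pare{x_{\tau-1}} - \nF\pare{x_\tau}},
\end{equation*}
and apply the triangle inequality in $\dualnorm\cdot$. The smoothness part is bounded exactly as before using \Cref{assum:lzlosmooth} and $\norm{x_\tau - x_{\tau-1}} \leq \ssi$. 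This gives the contributions $L_0 \sum_{\tau} c_{\tau,\ii}$ and $L_1 \sum_\tau c_{\tau,\ii}\Exp{\dualnorm{\nF(x_\tau)}}$. After summing over $t$ and exchanging the order of summation, these become $L_0 \frac{\ssi A T}{1-\beta}$ and $L_1\frac{\ssi A}{1-\beta}\iSum \Exp{\dualnorm{\nFxt}}$. The upper bound $\dualnorm\cdot$ in the $L_1$ term is kept in the native norm, so no change is needed there.

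For the noise term we first pass to the auxiliary norm using $\dualnorm\cdot \leq \rho\norm\cdot_p$ and Jensen's inequality:
\begin{equation*}
    \Exp{\dualnorm{\textstyle\sum_\tau b_{\tau,\ii}\iterationga\tau}} \leq \rho\, \Exp{\norm{\textstyle\sum_\tau b_{\tau,\ii}\iterationga\tau}_p^p}^{1/p}.
\end{equation*}
The sequence $(b_{\tau,\ii}\iterationga\tau)_\tau$ is still a martingale difference sequence in $\Bs$, because the $b_{\tau,\ii}$ are deterministic for constant parameters. We then invoke \Cref{thm:banach_martingale} in the $p$-uniformly smooth space $(\Bs,\norm\cdot_p)$ with constant $S$. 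This bounds the expression by
\begin{equation*}
    \rho S\pare{\sum_\tau b_{\tau,\ii}^p\,\Exp{\norm{\iterationga\tau}_p^p}}^{1/p}.
\end{equation*}
Next we use the lower comparison $\norm\cdot_p\leq\dualnorm\cdot$ to get $\Exp{\norm{\iterationga\tau}_p^p}\leq \Exp{\dualnorm{\iterationga\tau}^p}\leq\varsym^p$ by \pBCM. Here the tail index of the noise matches the smoothness exponent, so no reduction to $r=\min\{p,p'\}$ is needed. The lower comparison is exactly why the hypothesis is two-sided: the noise assumption is stated in $\dualnorm\cdot$ and has to be transferred down to $\norm\cdot_p$.

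It remains to sum over $t$ with the same computation as in \Cref{lem:mom:deviation_bound} for $r=p$. We have $\sum_\tau b_{\tau,\ii}^p \leq \beta^{p(t-1)} + (1-\beta)^{p-1}$. Subadditivity of $x\mapsto x^{1/p}$ then gives $\pare{\sum_\tau b_{\tau,\ii}^p}^{1/p} \leq \beta^{t-1} + (1-\beta)^{(p-1)/p}$. Summing over $t\in[T]$ yields $\frac{1}{1-\beta} + T(1-\beta)^{\frac{p-1}{p}}$, which after multiplying by $\rho\varsym S$ gives the first two terms.

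The only genuinely delicate point is the conditioning in the martingale step. The iterates $x_\tau$ depend on $\xi_1,\dots,\xi_{\tau-1}$, so the martingale difference property holds with respect to the natural filtration, as already used in \Cref{lem:mom:deviation_bound}. The main obstacle is therefore just verifying that \Cref{thm:banach_martingale} applies to $\norm\cdot_p$, which is a different norm on the same underlying space $\Bs$. Since that theorem is purely a statement about the normed space $(\Bs,\norm\cdot_p)$, this is immediate.
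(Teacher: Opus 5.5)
Your proposal is correct and follows the paper's proof. You reuse the decomposition and smoothness bound from \Cref{lem:mom:deviation_bound}, pass the noise term to $\rho\norm{\cdot}_p$, apply Jensen with the $p$-th power, and invoke \Cref{thm:banach_martingale} in $(\Bs,\norm{\cdot}_p)$. Your explicit use of the lower comparison $\norm{\cdot}_p \le \dualnorm{\cdot}$ to transfer the noise bound is exactly the step the paper leaves implicit when it says the proof ``closes the same way as before''.
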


We note that such $\rho$ always exists in finite dimensional, but may not exist in general Banach spaces. 

\begin{proof}
    We proceed exactly as in the proof of \Cref{lem:mom:deviation_bound}, but replace
    \begin{align*}
        \Exp{\dualnorm{\sum_{\tau=1}^t b_{\tau,\ii} \pare{\nf\pare{x_\tau,\xi_\tau}-\nF\pare{x_\tau}}}}
        &\leq
        \Exp{\dualnorm{\sum_{\tau=1}^t b_{\tau,\ii} \pare{\nf\pare{x_\tau,\xi_\tau}-\nF\pare{x_\tau}}}^r}^{\nicefrac 1 r}
    \end{align*}
    with
    \begin{align*}
        \Exp{\dualnorm{\sum_{\tau=1}^t b_{\tau,\ii} \pare{\nf\pare{x_\tau,\xi_\tau}-\nF\pare{x_\tau}}}}
        &\leq \rho \Exp{\norm{\sum_{\tau=1}^t b_{\tau,\ii} \pare{\nf\pare{x_\tau,\xi_\tau}-\nF\pare{x_\tau}}}_p}\\
        &\leq \rho \Exp{\norm{\sum_{\tau=1}^t b_{\tau,\ii} \pare{\nf\pare{x_\tau,\xi_\tau}-\nF\pare{x_\tau}}}_p^p}^{\nicefrac 1 p}.
    \end{align*}
    The proof closes the same way as before by using the $p$-uniform smoothness of $\norm \cdot_p$.
\end{proof}

Now we are ready to prove the norm-constant version of the convergence result for \ourAlg\ under heavy-tailed noise and $(L_0, L_1)$-smoothness.

\begin{theorem}\label{thm:convergence_nssdm_norm_equivalence_formulation}
    Suppose \Cref{assum:lower,assum:noise,assum:lzlosmooth} hold. Further assume that $\norm \cdot _p \leq \dualnorm \cdot \leq \rho \norm \cdot_p$, where $(\Bs, \norm \cdot_p)$ is $p$-uniformly smooth with smoothness constant $S_{p}(\Bs, \norm \cdot_p) \leq S$. Then the iterates generated by \ourAlg\ with 
    \begin{align*}
        \sst &\equiv \ssi = \min\set{\frac{1-\momentum}{5L_1}, \sqrt{\frac{\Delta_1(1-\momentum)}{L_0 T}}},\\
        \momt &\equiv \momentum = 1 - \min\set{1, \max \set{
            \pare{{\frac{\Dz L_0}{\varsym^2 \rho^2 S^2 T}}}^{\frac{p}{3p-2}},
            \pare{\frac{5L_1\Delta_1}{2\varsym \rho S T}}^{\frac{p}{2p-1}},
            \li^{-\frac{p}{2p-1}}
        }}
    \end{align*}
    satisfy
    \begin{align*}
        \frac 1 \li \iSum \Exp{\dualnorm{\nFxt}} 
        \leq &\ 
        9 \sqrt{\frac{\Dz L_0} \li} 
        + 12 \frac{\Dz L_1} \li 
        + 14 \pare{\frac{\Dz L_0 \pare{\rho \varsym S}^{\frac p {p-1}}} \li}^{\frac{p-1}{3p-2}} \\
        &\ + 14 \pare{\frac{\Dz L_1 \pare{\rho \varsym S}^{\frac p {p-1}}} \li}^{\frac{p-1}{2p-1}} 
        + 10 \frac{\rho \varsym S}{\li^{\frac{p-1}{2p-1}}}.
    \end{align*}
    In particular, the sample complexity to reach an $\eps$-$\dualnorm \cdot$-stationary point is at most
    \begin{equation*}
        \Oc \pare{
            \frac{\Dz L_1}{\eps}
            + \frac{\Dz L_0}{\eps^2}
            + \frac{\Dz L_1}{\eps}\pare{\frac{\rho\varsym S}{\eps}}^{\frac p {p-1}}
            + \frac{\Dz L_0}{\eps^2}\pare{\frac{\rho \varsym S}{\eps}}^{\frac p {p-1}}
            + \pare{\frac{\rho \varsym S}{\eps}}^{\frac{2p-1}{p-1}}
        }.
    \end{equation*}
\end{theorem}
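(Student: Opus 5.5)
The proof is a verbatim transcription of the proof of \Cref{thm:main_lzlo}. The only change is that the deviation bound \Cref{lem:mom:deviation_bound} is replaced by \Cref{lem:mom:deviation_bound_norm_equivalence_formulation}. This amounts to substituting $K \mapsto \rho S$ and $r \mapsto p$ throughout.

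First, I would apply the descent lemma \Cref{lem:descent_lemma} with constant $\ssi,\momentum$ and take expectations. Inserting \Cref{lem:mom:deviation_bound_norm_equivalence_formulation} gives
\begin{align*}
\ssi \iSum \Exp{\dualnorm{\nFxt}} \leq{}& \Dz + L_0\ssi^2 T\Big(B+\frac{2A}{1-\momentum}\Big) + 2\rho\varsym S \ssi\Big(\frac{1}{1-\momentum} + T(1-\momentum)^{\frac{p-1}{p}}\Big)\\
&+ L_1\ssi^2\Big(B+\frac{2A}{1-\momentum}\Big)\iSum \Exp{\dualnorm{\nFxt}}.
\end{align*}
Next, I would absorb the last sum into the left-hand side via \eqref{eq:technical_A_B_term_bound}. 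Since $\ssi \le (1-\momentum)/(5L_1)$, we have $A\le 10/9$ and $B\le A/2$, so the coefficient is at most $\frac59\ssi$. Dividing by $\frac49\ssi T$ and using $1/\min\{a,b\}\le 1/a+1/b$ in the definition of $\ssi$ reproduces \eqref{eq:plugged_in_eta} with $K$ replaced by $\rho S$. That is, the average gradient norm is bounded by
\[
9\sqrt{\Dz L_0/(T(1-\momentum))} + 12\Dz L_1/(T(1-\momentum)) + 5\rho\varsym S/(T(1-\momentum)) + 5\rho\varsym S(1-\momentum)^{(p-1)/p}.
\]

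Then I would bound the four terms using $1-\momentum=\min\{1,\max\{u,v,w\}\}$, where
\[
u=\Big(\tfrac{\Dz L_0}{\varsym^2\rho^2S^2T}\Big)^{p/(3p-2)},\quad v=\Big(\tfrac{5L_1\Dz}{2\varsym\rho S T}\Big)^{p/(2p-1)},\quad w=T^{-p/(2p-1)}.
\]
\begin{itemize}
\item The first term uses $1/\sqrt{1-\momentum}\le 1+1/\sqrt u$.
\item The second uses $1/(1-\momentum)\le 1+1/v$, together with $12(2/5)^{p/(2p-1)}\le 7$.
\item The third uses $1/(1-\momentum)\le 1/w$.
\item The fourth uses $(1-\momentum)^{\alpha}\le u^\alpha+v^\alpha+w^\alpha$.
\end{itemize}
Each substitution is an exponent check; for instance, $\rho\varsym S\, u^{(p-1)/p}=\big(\Dz L_0(\rho\varsym S)^{p/(p-1)}/T\big)^{(p-1)/(3p-2)}$. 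Collecting terms, with $9+5=14$, $7+7\le 14$ (using $5(5/2)^{(p-1)/(2p-1)}\le7$) and $5+5=10$, gives the stated constants.

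Finally, the sample complexity follows by setting each of the five terms at most $\eps/5$ and solving for $T$. For example, $(\rho\varsym S/T^{(p-1)/(2p-1)})\lesssim\eps$ gives $T\gtrsim(\rho\varsym S/\eps)^{(2p-1)/(p-1)}$.

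The main obstacle is really only bookkeeping: checking that the case $1-\momentum=1$ (the $\min$ with $1$) is harmless in each bound, which the inequalities above already cover, and confirming the exponent algebra. All conceptual work sits in \Cref{lem:mom:deviation_bound_norm_equivalence_formulation}, which is assumed.
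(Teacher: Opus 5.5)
Your proposal is correct and follows the paper's proof. The paper also reruns the argument of \Cref{thm:main_lzlo} with \Cref{lem:mom:deviation_bound_norm_equivalence_formulation} in place of \Cref{lem:mom:deviation_bound} (i.e.\ $K \mapsto \rho S$, $r \mapsto p$), absorbs the $L_1$-term via the $\frac59\eta$ bound, and splits the four remaining terms with $u,v,w$ to reach the same constants $9,12,14,14,10$; your explicit derivation of the sample complexity is a small step the paper leaves implicit.
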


We note that \pBCM\ could be replaced with the $S_p$ version $\Exp{\norm{\nf\pare{x,\xi} - \nF\pare{x}}_{p}^p} \leq \sigma^p$ for all $x \in \Bc$, which would allow for smaller $\sigma$ values. However, we choose to state the result with the stronger \pBCM\ for better comparability with the previous results for \ourAlg.

\begin{proof}
    For notational conciseness denote $\kappa \coloneqq \rho \varsym S$. We proceed as in the proof of \Cref{thm:main_lzlo}, but replacing \Cref{lem:mom:deviation_bound} by \Cref{lem:mom:deviation_bound_norm_equivalence_formulation}, to get
    \begin{align*}
        \ssi \iSum \Exp{\nnFxt_*}
        \leq&\  
        \Delta_1
        + L_0 \ssi^2 \li \pare{B + \frac{2 A}{1-\momentum}}
        + \frac{2 \kappa \ssi }{1-\momentum} 
        + 2 \kappa \ssi \li (1-\momentum)^{\frac{p-1}p}\\
        &\ + L_1 \ssi^2 \pare{B + \frac{2A}{1-\momentum}} \iSum \Exp{\norm{\nF \pare{\xt}}_*}.
    \end{align*}
    By \Cref{eq:technical_A_B_term_bound} $L_1 \ssi^2 \pare{B + \frac{2A}{1-\momentum}} \leq \frac 5 9 \eta$ and re-arranging thus yields
    \begin{equation*}
        \frac 1 \li \iSum \Exp{\dualnorm{\nFxt}}
        \leq \frac{9\Dz}{4\ssi T}
        + \frac{6 L_0 \ssi A}{ (1 - \momentum)}
        + \frac{5 \kappa }{\li(1-\momentum)} 
        + 5 \kappa (1-\momentum)^{\frac{p-1}p}.
    \end{equation*}
    Using $\ssi = \min \set{\frac {1-\momentum} {5L_1}, \sqrt{\frac{\Delta_1(1-\momentum)}{L_0 T}}}$ we further obtain
    \begin{equation*}
        \frac 1 \li \iSum \Exp{\dualnorm{\nFxt}}
        \leq 
        9 \sqrt{\frac{\Dz L_0}{\li(1-\momentum)}} 
        + \frac{12 \Dz L_1}{\li (1-\momentum)}
        + \frac{5 \kappa }{\li(1-\momentum)} 
        + 5 \kappa (1-\momentum)^{\frac{p-1}p}.
    \end{equation*}
    Now denote 
    \begin{equation*}
        U \coloneqq \pare{\frac{\Dz L_0 {\kappa}^{\frac p {p-1}}} \li}^{\frac{p-1}{3p-2}}, \qquad
        V \coloneqq \pare{\frac{\Dz L_1 {\kappa}^{\frac p {p-1}}} \li}^{\frac{p-1}{2p-1}}, \qquad
        W \coloneqq \frac{\kappa}{\li^{\frac{p-1}{2p-1}}}
    \end{equation*}
    With the same arguments as in the proof of \Cref{thm:main_lzlo} we derive the upper bounds 
    \begin{align*}
        9 \sqrt{\frac{\Dz L_0}{\li(1-\momentum)}} 
        &\leq 9 \sqrt{\frac{\Dz L_0} \li} + 9U \\
        \frac{12 \Dz L_1}{\li (1-\momentum)}
        &\leq 12 \frac{\Dz L_1} \li + 7V \\
        \frac{5 \kappa }{\li (1-\momentum)} 
        &\leq 5 W \\
        5 \kappa (1-\momentum)^{\frac{p-1}p}
        &\leq 5 U + 7 V + 5 W
    \end{align*}
    on the remaining terms. Combining these bounds yields 
    \begin{equation*}
        \frac 1 \li \iSum \Exp{\dualnorm{\nFxt}}
        \leq 
        9 \sqrt{\frac{\Dz L_0} \li} + 12 \frac{\Dz L_1} \li + 14 U + 14 V + 10 W,
    \end{equation*}
    which finishes the proof.
\end{proof}

\subsubsection{Application to \texorpdfstring{\muon}{Muon}}
\label{sec:app.proofs.muon.muon}
Next we extend the previous result to \muon, which is an instance of \ourAlg\ with $\norm \cdot = \opnorm \cdot$, i.e.,
\begin{align}\tag{\muon}\begin{split}
    \mtp &\gets \momt \mtm + (1-\momt) \nfxt\\
    \xtp &\gets \xt + \sst \argmin_{\opnorm{d}} \langle \mt, d \rangle,
\end{split}\end{align}
where $\argmin_{\opnorm{d}} \langle \mt, d \rangle = UV$ is the polar decomposition.
In order to analyse \muon, we need to upper-bound the nuclear norm $\nucnorm \cdot \leq \rho \norm \cdot_p$ with a $p$-uniformly smooth norm $\norm \cdot_p$, to use the machinery from \Cref{sec:app.proofs.muon.general_norm}. In particular, we want to choose a $p$-uniformly smooth norm that has the smallest norm equivalence constant $\rho$ to $S_1$. We motivate our choice of $p$-uniformly smooth norm with the following lemma, which shows that the Schatten-$p$ norm is the best possible choice in the above sense.

\begin{lemma}
    Let $\norm \cdot$ be a $p$-uniformly smooth norm on $\R^{m \times n}$ with smoothness constant $S_p(\norm \cdot) \leq S$. Furthermore assume the normalisation $\norm {u v^\top} = 1$ for all $u \in \R^m, v \in \R^n$ with $\norm u_2 = \norm v_2 = 1$. Then we have
    \begin{equation*}
        \sup_{A \in \R^{m \times n}\setminus \set{0}} \frac{\nucnorm{A}}{\norm{A}} \geq \max\set{1, \frac{ \min\set{m,n}^{\frac {p-1} p}} S}.
    \end{equation*}
    In particular, the Schatten-$p$ norm achieves this lower-bound exactly.
\end{lemma}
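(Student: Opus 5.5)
The bound has two parts. The trivial part, $\sup \nucnorm{A}/\norm{A} \geq 1$, I would get by taking any rank-one $A = uv^\top$ with unit vectors. Then $\nucnorm{A} = 1 = \norm{A}$ by the normalisation.

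For the main part, set $\kappa = \min\set{m,n}$ and $E_i = e_i f_i^\top$ as in the lower-bound sketch. I would take the test matrices $A_s = \sum_{i=1}^\kappa s_i E_i$ with random Rademacher signs $s \in \set{\pm1}^\kappa$. Each $A_s$ has all $\kappa$ singular values equal to one, so $\nucnorm{A_s} = \kappa$ for every $s$. This gives
\begin{equation*}
    \sup_{A \neq 0} \frac{\nucnorm{A}}{\norm{A}} \geq \frac{\kappa}{\min_s \norm{A_s}} \geq \frac{\kappa}{\left(\mathbb{E}_s \norm{A_s}^p\right)^{1/p}}.
\end{equation*}

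It then suffices to show $\mathbb{E}_s\norm{A_s}^p \leq S^p \kappa$. The sequence $Y_i = s_i E_i$ is a martingale difference sequence, indeed independent with mean zero, in the space $(\R^{m\times n}, \norm\cdot)$. That space is $p$-uniformly smooth with constant at most $S$. The martingale inequality of \Cref{thm:banach_martingale} holds for any $p$-uniformly smooth Banach space, not only duals, so applying it to this space gives
\begin{equation*}
    \mathbb{E}\norm{\sum_{i=1}^\kappa s_i E_i}^p \leq S^p \sum_{i=1}^\kappa \norm{E_i}^p = S^p \kappa,
\end{equation*}
using $\norm{E_i} = 1$ from the normalisation. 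If one insists on the dual formulation, I would instead view $\R^{m\times n}$ as the dual of itself with the dual norm of $\norm\cdot$, which is legitimate in finite dimensions. Alternatively, one can iterate the defining inequality of $S_p$ directly. Averaging over $\pm y$ gives $\mathbb{E}\norm{x + \epsilon y}^p \leq \norm{x}^p + S^p\norm{y}^p$, and induction over $i$ yields the same bound. Combining the two displays gives $\kappa / (S\kappa^{1/p}) = \kappa^{(p-1)/p}/S$.

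For the claim that the Schatten-$p$ norm attains the bound, I would use three facts:
\begin{itemize}
    \item The Schatten-$p$ norm has $S_p = 1$ by \Cref{example:schatten_p}.
    \item A rank-one matrix $uv^\top$ with unit vectors has Schatten-$p$ norm one, so the normalisation holds.
    \item By \Cref{lem:schatten_norm_equivalence_constants}, $\nucnorm{A} \leq \kappa^{1 - 1/p}\norm{A}_{S_p}$ together with $\nucnorm{A} \geq \norm{A}_{S_p}$.
\end{itemize}
Hence the supremum equals $\max\set{1, \kappa^{(p-1)/p}}$, which matches the lower bound with $S = 1$.

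The main obstacle is the martingale step. One must justify using the $p$-smoothness of $\norm\cdot$ itself rather than of a dual space, and this is resolved by the elementary Rademacher induction above. I would present that induction directly to avoid any dependence on duality.
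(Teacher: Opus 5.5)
Your proposal is correct and follows essentially the paper's own argument. You take the Rademacher sum $\sum_i \epsilon_i E_i$ of the diagonal rank-one units, note it has nuclear norm $\min\set{m,n}$, and iterate the two-point inequality defining $S_p$ to get $\Exp{\norm{\sum_i \epsilon_i E_i}^p} \leq S^p \min\set{m,n}$; this is exactly the direct induction you say you would present. You also spell out the trivial bound $\geq 1$ and the attainment by the Schatten-$p$ norm (via $S_p = 1$ and \Cref{lem:schatten_norm_equivalence_constants}), both of which the paper leaves implicit.
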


\begin{proof}
    Denote $d \coloneqq \min\set{m,n}$ and define
    \begin{equation*}
        C \coloneqq \sup_{A \in \R^{m \times n}\setminus \set{0}} \frac{\nucnorm{A}}{\norm{A}}.
    \end{equation*}
    Furthermore let $E_i \coloneqq e_i f_i^\top \in \R^{m \times n}$, where $(e_i)_{i=1}^m, (f_i)_{i=1}^n$ are the standard bases of $\R^m$ and $\R^n$ respectively, for $i = 1, \dots, d$, and $\eps_1, \dots, \eps_d$ be independent Rademacher random variables. Define
    \begin{equation*}
        M_k \coloneqq \sum_{i=1}^k \eps_i E_i.
    \end{equation*}
    Then, by definition of the smoothness constant $S_p(\R^{m \times n})$ (see \Cref{def:constants}), we have
    \begin{equation*}
        \Exp[\eps_1, \dots, \eps_{k-1}]{\norm{M_k}^p} 
        = \frac{\norm{M_{k-1} + E_k}^p + \norm{M_{k-1} - E_k}^p}{2}
        \leq \norm{M_{k-1}}^p + S^p \norm{E_k}^p.
    \end{equation*}
    Hence, by induction and $\norm{E_k} = 1$, we get $\Exp{\norm{M_d}^p} \leq d S^p$. On the other hand note that 
    \begin{equation*}
        C \geq \frac{\nucnorm{M_d}}{\norm{M_d}} = \frac d {\norm{M_d}}
    \end{equation*}
    and hence
    \begin{equation*}
        \frac {d^p} {C^p} 
        \leq \Exp{\norm{M_d}^p}
        \leq d S^p.
    \end{equation*}
    Rearranging yields the claim.
\end{proof}

Finally we can state the $(L_0, L_1)$-smooth version of \Cref{thm:muon_convergence}.

\begin{corollary}\label{cor:convergence_muon}
    Suppose \Cref{assum:lower,assum:lzlosmooth,assum:noise} hold and let $\kappa \coloneqq \min \set{m, n}^{\frac{(p-1)}{p}}$. Then the iterates generated by \muon\ with 
    \begin{align*}
        \sst &\equiv \ssi = \min\set{\frac{1-\momentum}{5L_1}, \sqrt{\frac{\Delta_1(1-\momentum)}{L_0 T}}},\\
        \momt &\equiv \momentum = 1 - \min\set{1, \max \set{
            \pare{{\frac{\Dz L_0}{\kappa^2 \varsym^2 T}}}^{\frac{p}{3p-2}},
            \pare{\frac{5L_1\Delta_1}{2 \kappa \varsym T}}^{\frac{p}{2p-1}},
            \li^{-\frac{p}{2p-1}}
        }}
    \end{align*}
    satisfy
    \begin{align*}
        \frac 1 \li \iSum \Exp{\nucnorm{\nFxt}} 
        \leq &\ 
        9 \sqrt{\frac{\Dz L_0} \li} 
        + 12 \frac{\Dz L_1} \li 
        + 14 \pare{\frac{\min \set{m, n} \Dz L_0  \varsym^{\frac p {p-1}}} \li}^{\frac{p-1}{3p-2}} \\
        &\ + 14 \pare{\frac{\min \set{m, n} \Dz L_1 \varsym^{\frac p {p-1}}} \li}^{\frac{p-1}{2p-1}} 
        + 10 \frac{\min \set{m, n}^{\frac{(p-1)}{p}} \varsym}{\li^{\frac{p-1}{2p-1}}}.
    \end{align*}
    In particular, the sample complexity to reach an $\eps$-$\nucnorm \cdot$-stationary point is at most
    \begin{equation*}
        \Oc \pare{
            \frac{\Dz L_1}{\eps}
            + \frac{\Dz L_0}{\eps^2}
            + \iota \frac{\Dz L_1}{\eps}\pare{\frac{\varsym}{\eps}}^{\frac p {p-1}}
            + \iota \frac{\Dz L_0}{\eps^2}\pare{\frac{ \varsym}{\eps}}^{\frac p {p-1}}
            + \iota^{\frac{(2p-1)}{p}}\pare{\frac{\varsym}{\eps}}^{\frac{2p-1}{p-1}}
        },
    \end{equation*}
    where $\iota \coloneqq {\min \set{m, n}}$.
\end{corollary}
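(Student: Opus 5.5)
This is a direct specialisation of \Cref{thm:convergence_nssdm_norm_equivalence_formulation}. I would take $(\Bc,\norm\cdot) = (\R^{m\times n}, \opnorm\cdot)$, so that \muon\ is exactly \ourAlg\ and the dual norm is $\dualnorm\cdot = \nucnorm\cdot$. For the auxiliary $p$-uniformly smooth norm on $\Bs = \R^{m\times n}$ I would choose the Schatten-$p$ norm, $\norm\cdot_p \coloneqq \norm\cdot_{S_p}$.

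\textbf{Step 1: hypotheses of the general theorem.}
\begin{itemize}
\item By \Cref{example:schatten_p}, since $1<p\le 2$, the space $(\R^{m\times n},\norm\cdot_{S_p})$ is $p$-uniformly smooth with $S_p = 1$. We can therefore take $S = 1$.
\item By \Cref{lem:schatten_norm_equivalence_constants} with exponents $1 \le p$, we have
\[
\norm{A}_{S_p} \le \nucnorm{A} \le \min\set{m,n}^{1-\frac 1 p}\norm{A}_{S_p}.
\]
So the two-sided equivalence required by the theorem holds with $\rho = \min\set{m,n}^{\frac{p-1}{p}} = \kappa$.
\item \Cref{assum:noise} is stated in the dual norm $\nucnorm\cdot$, and \Cref{thm:convergence_nssdm_norm_equivalence_formulation} uses exactly this assumption. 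The lower, smoothness and noise assumptions therefore transfer verbatim.
\end{itemize}

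\textbf{Step 2: parameters and rate.} Substituting $\rho S = \kappa$, the stepsize and momentum schedules of the theorem become exactly those in the corollary.

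For the rate, the key identity is $(\rho\varsym S)^{\frac p{p-1}} = \kappa^{\frac p{p-1}}\varsym^{\frac p{p-1}} = \min\set{m,n}\,\varsym^{\frac p{p-1}}$, because $\kappa^{\frac p{p-1}} = \min\set{m,n}$. This produces the $U$ and $V$ terms with a factor $\min\set{m,n}$ inside. The $W$ term becomes $10\,\min\set{m,n}^{\frac{p-1}p}\varsym/\li^{\frac{p-1}{2p-1}}$.

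For the sample complexity, invert each term, or simply substitute into the theorem's complexity bound:
\begin{itemize}
\item $\pare{\rho\varsym S/\eps}^{\frac p{p-1}} = \iota\pare{\varsym/\eps}^{\frac p{p-1}}$;
\item $\pare{\rho\varsym S/\eps}^{\frac{2p-1}{p-1}} = \kappa^{\frac{2p-1}{p-1}}\pare{\varsym/\eps}^{\frac{2p-1}{p-1}} = \iota^{\frac{2p-1}{p}}\pare{\varsym/\eps}^{\frac{2p-1}{p-1}}$.
\end{itemize}

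\textbf{Main obstacle.} There is essentially no analytic obstacle. The only points needing care are:
\begin{itemize}
\item the theorem requires the \emph{lower} inequality $\norm\cdot_p\le\dualnorm\cdot$ as well, which holds by the Schatten monotonicity in \Cref{lem:schatten_norm_equivalence_constants};
\item the smoothness constant must be computed for the Schatten-$p$ norm on the dual space, where the noise lives, which \Cref{example:schatten_p} (non-commutative Clarkson inequalities) supplies;
\item the exponent bookkeeping $\kappa^{p/(p-1)} = \min\set{m,n}$ must be done carefully.
\end{itemize}
The $L$-smooth \Cref{thm:muon_convergence} then follows by setting $L_1 = 0$, $L_0 = L$.
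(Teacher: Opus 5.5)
Your proposal is correct and is essentially the paper's own proof: the paper likewise applies \Cref{thm:convergence_nssdm_norm_equivalence_formulation} with $(\Bc,\norm\cdot) = (\R^{m\times n},\opnorm\cdot)$, $\norm\cdot_p = \norm\cdot_{S_p}$, $\rho = \kappa$ (from \Cref{lem:schatten_norm_equivalence_constants}) and $S = 1$ (from \Cref{example:schatten_p}). You also check explicitly two things the paper leaves implicit: the lower inequality $\norm\cdot_{S_p} \le \nucnorm\cdot$, which is needed to transfer the noise bound to the Schatten-$p$ norm, and the exponent identities $\kappa^{p/(p-1)} = \iota$ and $\kappa^{(2p-1)/(p-1)} = \iota^{(2p-1)/p}$.
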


\begin{proof}
    By \Cref{lem:schatten_norm_equivalence_constants} we have $\nucnorm{A} \leq \kappa \norm{A}_{S_p}$ for all $A \in \R^{m \times n}$, and the Schatten-$p$ norm is $p$-uniformly smooth with smoothness constant $S_p(\R^{m \times n}) \leq 1$ (\Cref{example:schatten_p}).
    Hence we may apply \Cref{thm:convergence_nssdm_norm_equivalence_formulation} with $(\Bc, \norm \cdot) = (\R^{m \times n}, \opnorm \cdot), \norm \cdot_p = \norm \cdot_{S_p}, \rho \gets \kappa$ and $S = 1$. 
\end{proof}

\begin{remark}\label{rem:application_full_network_including_vectors}
    Extending \Cref{rem:full_network_muon}, we note that \Cref{thm:convergence_nssdm_norm_equivalence_formulation} also extends to full networks including vector valued weights by considering \citep{SCION2025pethick} the Banach space
    \begin{align*}
        \Bc &= \R^{m_1, n_1} \times \hdots \times \R^{m_D, n_D} \times \R^{d_1} \times \hdots \times \R^{d_K},\\
        \norm \cdot &= \max\set{\max_{l \in [D]} \opnorm{W_l}, \max_{l \in [K]} \norm{v_l}_{\ell_\infty}}.
    \end{align*}
    In this case, vector-valued weights are trained with \textsc{Signum}, which is widely considered to be an idealised version of \textsc{Adam} \citep{kunstner2023noise}. The corresponding dimension dependence is given by
    \begin{equation*}
        \sum_{l \in [D]} \min\set{m_l, n_l} + \sum_{l \in [K]} d_l.
    \end{equation*}
\end{remark}

\section{Extension to Stochastic Conditional Gradient}
\label{sec:app.proofs.scg}
Let $\mathcal D \subset \Bc$ be convex and bounded, with diameter
\begin{equation*}
D_{\mathcal D} \coloneqq \sup_{x,y \in \mathcal D} \norm{x-y}.
\end{equation*}
Consider
\begin{equation*}
\min_{x \in \mathcal D} F(x),
\end{equation*}
and assume that the linear minimisation oracle over $\mathcal D$ is exact, i.e., each subproblem $\inf_{u \in \mathcal D}\angles{\phi,u}$ admits a minimiser. We consider the stochastic conditional gradient (SCG) iteration
\begin{equation}
\tag{SCG}\label{eq:SCG}
\begin{aligned}
m_t &\gets \beta_t m_{t-1} + (1-\beta_t)\nf\pare{x_t,\xi_t}, \\
v_t &\in \argmin_{u \in \mathcal D} \angles{m_t, u}, \\
x_{t+1} &\gets (1-\eta_t)x_t + \eta_t v_t,
\end{aligned}
\end{equation}
where $\iterationm 0 \coloneqq \nf \pare{\iterationx \fin, \iterationxi \fin}$. We measure stationarity by the Frank-Wolfe gap
\begin{equation*}
g_{\mathcal D}(x) \coloneqq \sup_{u \in \mathcal D} \angles{\nabla F(x), x-u}.
\end{equation*}

\begin{lemma}[Constrained descent bound]
\label{lem:scg_descent}
Suppose \lsmooth\ holds. Let $\mu_t \coloneqq m_t - \nabla F(x_t)$. Then, for every $t \geq 1$,
\begin{equation*}
\eta_t \Exp{g_{\mathcal D}(x_t)}
\leq
\Exp{F(x_t)-F(x_{t+1})}
+ \eta_t D_{\mathcal D}\Exp{\dualnorm{\mu_t}}
+ \tfrac L 2 \eta_t^2 D_{\mathcal D}^2.
\end{equation*}
\end{lemma}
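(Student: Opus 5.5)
The plan is the standard Frank--Wolfe descent argument, with the gradient replaced by the momentum $m_t$ inside the linear minimisation step. First I will use \lsmooth\ to get the quadratic upper bound
\begin{equation*}
F(x_{t+1}) \leq F(x_t) + \angles{\nabla F(x_t), x_{t+1}-x_t} + \tfrac L2 \norm{x_{t+1}-x_t}^2 .
\end{equation*}
This holds in Banach spaces by integrating the Fréchet derivative along the segment, exactly as in \Cref{lem:lzlo_upper_bound} with $L_1 = 0$. The update gives $x_{t+1}-x_t = \eta_t(v_t - x_t)$. Since $x_t \in \mathcal D$ (it is a convex combination of points of $\mathcal D$, assuming $x_1 \in \mathcal D$ and $\eta_t \in [0,1]$), we have $\norm{v_t - x_t} \leq D_{\mathcal D}$, so the quadratic term is at most $\tfrac L2 \eta_t^2 D_{\mathcal D}^2$.

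The main step is the linear term. I will decompose
\begin{equation*}
\angles{\nabla F(x_t), v_t - x_t} = \angles{m_t, v_t - x_t} - \angles{\mu_t, v_t - x_t}.
\end{equation*}
For the first piece, $v_t$ minimises $\angles{m_t,\cdot}$ over $\mathcal D$. Hence for every $u \in \mathcal D$,
\begin{equation*}
\angles{m_t, v_t - x_t} \leq \angles{m_t, u - x_t} = \angles{\nabla F(x_t), u - x_t} + \angles{\mu_t, u - x_t}.
\end{equation*}
Combining the two pieces gives
\begin{equation*}
\angles{\nabla F(x_t), v_t - x_t} \leq \angles{\nabla F(x_t), u - x_t} + \angles{\mu_t, u - v_t},
\end{equation*}
and the last term is at most $\dualnorm{\mu_t} D_{\mathcal D}$. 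Taking $u$ to approach the supremum in $g_{\mathcal D}(x_t)$ (or the infimum over $u$) yields
\begin{equation*}
\angles{\nabla F(x_t), v_t - x_t} \leq -g_{\mathcal D}(x_t) + D_{\mathcal D}\dualnorm{\mu_t}.
\end{equation*}
This is the point to be careful about. Bounding the two error pieces separately, as $\angles{\mu_t,v_t-x_t}$ and $\angles{\mu_t,u-x_t}$, would give the worse constant $2D_{\mathcal D}$. Pairing them into $\angles{\mu_t,u-v_t}$, which involves a single difference of two points of $\mathcal D$, gives the constant $D_{\mathcal D}$ in the statement.

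Finally I multiply by $\eta_t \geq 0$, substitute into the smoothness inequality, rearrange, and take expectations. No independence or martingale structure is needed, since the bound holds pathwise before expectation. This yields the claimed inequality.
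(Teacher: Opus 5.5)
Your proof is correct and follows the paper's argument essentially line by line. Both use the $L$-smooth quadratic upper bound and the optimality $\angles{m_t, v_t} \leq \angles{m_t, u}$ of the linear minimisation step, which yields the paired error term $\angles{\mu_t, u - v_t} \leq D_{\mathcal D}\dualnorm{\mu_t}$; then come the diameter bound on the quadratic term, the supremum over $u \in \mathcal D$, and the expectation. Your explicit remarks are implicit in the paper's proof and make your version slightly more careful: that $x_t \in \mathcal D$ needs $x_1 \in \mathcal D$ and $\eta_t \in [0,1]$, and that the supremum should be taken pathwise before the expectation.
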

\begin{proof}
For any $u \in \mathcal D$, smoothness and $x_{t+1} = x_t + \eta_t(v_t-x_t)$ give
\begin{align*}
F(x_{t+1})
&\leq F(x_t) + \eta_t \angles{\nabla F(x_t), v_t-x_t} + \tfrac L 2 \eta_t^2 \norm{v_t-x_t}^2 \\
&\leq F(x_t) + \eta_t \angles{\nabla F(x_t), u-x_t} + \eta_t \angles{\mu_t, u-v_t} + \tfrac L 2 \eta_t^2 \norm{v_t-x_t}^2,
\end{align*}
where we used $\angles{m_t,v_t} \leq \angles{m_t,u}$. Since $u,v_t,x_t \in \mathcal D$,
\begin{equation*}
\eta_t \angles{\nabla F(x_t), x_t-u}
\leq
F(x_t)-F(x_{t+1}) + \eta_t D_{\mathcal D}\dualnorm{\mu_t} + \tfrac L 2 \eta_t^2 D_{\mathcal D}^2.
\end{equation*}
Taking expectations and the supremum over $u$ proves the claim.
\end{proof}

The proof relies on an error control Lemma analogue to \Cref{lem:mom:deviation_bound}.
Fortunately, the argument in \Cref{lem:mom:deviation_bound} only depends on the momentum recursion in the update rule, which remains unchanged for \ref{eq:SCG}.
The only new ingredient in the following proof is that the iterate difference is controlled by the domain diameter rather than the unconstrained update magnitude.

\begin{lemma}[Momentum error for \ref{eq:SCG}]
\label{lem:scg_error}
Suppose \Cref{assum:lsmooth,assum:noise} hold, and that $(\Bc,\norm\cdot)$ is $q$-uniformly convex with $q$-convexity constant $K$. Let $p'$ be the conjugate exponent of $q$ and set $r \coloneqq \min\set{p,p'}$. For constant parameters $\eta_t \equiv \eta$, and $\beta_t \equiv \beta \in [0,1)$,
\begin{equation*}
\textstyle\sum_{t=1}^T \Exp{\dualnorm{m_t - \nabla F(x_t)}}
\leq
\varsym K \tfrac{1}{1-\beta}
+ \varsym K T (1-\beta)^{\tfrac{r-1}{r}}
+ \tfrac{\eta L D_{\mathcal D} T}{1-\beta}.
\end{equation*}
\end{lemma}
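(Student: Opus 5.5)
The plan is to rerun the argument of \Cref{lem:mom:deviation_bound} with $L_1 = 0$, $L_0 = L$. Only the control of consecutive iterate differences changes.

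First, unroll the momentum recursion exactly as in \Cref{lem:mom:deviation_bound}. With $\mu_t = m_t - \nabla F(x_t)$ and constant $\beta$ this gives
\begin{equation*}
\mu_t = \sum_{\tau=1}^t b_{\tau,t}\bigl(\nabla f(x_\tau,\xi_\tau) - \nabla F(x_\tau)\bigr) + \sum_{\tau=2}^t \beta^{t-\tau+1}\bigl(\nabla F(x_{\tau-1}) - \nabla F(x_\tau)\bigr).
\end{equation*}
Here $b_{1,t} = \beta^{t-1}$ and $b_{\tau,t} = (1-\beta)\beta^{t-\tau}$ for $\tau \geq 2$. This decomposition uses only the momentum update and $m_0 = \nabla f(x_1,\xi_1)$, so it holds verbatim for \ref{eq:SCG}.

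Second, bound the noise term via the martingale inequality. By \Cref{lem:convexity_smoothness_duality}, $\Bs$ is $p'$-uniformly smooth with $S_{p'}(\Bs) = K$. By \Cref{lem:make_smoothness_weaker}, it is then also $r$-uniformly smooth with constant at most $K$. Since $x_\tau$ is determined by $\xi_1,\dots,\xi_{\tau-1}$ and $\nabla f$ is unbiased, the differences $\nabla f(x_\tau,\xi_\tau) - \nabla F(x_\tau)$ form a martingale difference sequence; the deterministic weights $b_{\tau,t}$ preserve this. Jensen's inequality, \Cref{thm:banach_martingale}, and then $r \le p$ with \pBCM\ give the bound
\begin{equation*}
\sigma K \Bigl(\sum_{\tau=1}^t b_{\tau,t}^r\Bigr)^{1/r}.
\end{equation*}
The geometric-sum computation from \Cref{lem:mom:deviation_bound} then yields $\sum_t (\cdot) \le \sigma K\bigl(\tfrac{1}{1-\beta} + T(1-\beta)^{(r-1)/r}\bigr)$. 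This step uses $(a+b)^{1/r}\le a^{1/r}+b^{1/r}$ and $\sum_t \beta^{t-1}\le \tfrac{1}{1-\beta}$.

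Third, the drift term, which is the only new ingredient. Since $x_{\tau}-x_{\tau-1} = \eta(v_{\tau-1}-x_{\tau-1})$ with both points in $\mathcal D$, we have $\norm{x_\tau - x_{\tau-1}} \le \eta D_{\mathcal D}$. This needs $x_t\in\mathcal D$ for all $t$, which follows inductively from convexity, $x_1\in\mathcal D$ and $\eta\in[0,1]$; we assume this implicitly. \lsmooth\ then gives $\dualnorm{\nabla F(x_{\tau-1})-\nabla F(x_\tau)} \le L\eta D_{\mathcal D}$. Summing the weights $\beta^{t-\tau+1}$ gives at most $\tfrac{L\eta D_{\mathcal D}}{1-\beta}$ per $t$, hence $\tfrac{\eta L D_{\mathcal D} T}{1-\beta}$ in total.

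Combining the three bounds by the triangle inequality finishes the proof. The main subtlety is measurability: the martingale property must hold with respect to the filtration generated by the $\xi$'s, which requires that $v_t$ is chosen measurably, for instance by a deterministic selection rule in the $\argmin$.
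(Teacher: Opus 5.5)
Your proposal is correct and follows the paper's proof essentially step for step: the same unrolling with $b_{\tau,t}$ and drift weights $\beta^{t-\tau+1}$, the noise bound via \Cref{thm:banach_martingale} and \Cref{lem:make_smoothness_weaker}, and the diameter bound $\norm{x_t-x_{t-1}}\le \eta D_{\mathcal D}$ for the drift term. The paper uses tacitly what you state explicitly: $x_t\in\mathcal D$ requires $x_1\in\mathcal D$ and $\eta\le 1$ (as chosen in \Cref{thm:scg_gap}), and $v_t$ must be selected measurably.
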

\begin{proof}
Writing $\delta_t \coloneqq \nf\pare{x_t,\xi_t} - \nabla F(x_t)$, we have
\begin{equation*}
\mu_t
:=
m_t - \nabla F(x_t)
=
\beta_t \mu_{t-1} + (1-\beta_t)\delta_t + \beta_t\pare{\nabla F(x_{t-1})-\nabla F(x_t)}.
\end{equation*}
For constant $\beta$, unrolling gives
\begin{equation*}
\mu_t
=
\sum_{\tau=1}^t b_{\tau,t}\delta_\tau
+ \sum_{\tau=2}^t c_{\tau,t}\pare{\nabla F(x_{\tau-1})-\nabla F(x_\tau)},
\end{equation*}
where $b_{1,t}=\beta^{t-1}$, $b_{\tau,t}=(1-\beta)\beta^{t-\tau}$ for $\tau \geq 2$, and $c_{\tau,t}=\beta^{t-\tau+1}$.
By \lsmooth,
\begin{equation*}
\dualnorm{\nabla F(x_{\tau-1})-\nabla F(x_\tau)}
\leq
L \norm{x_{\tau-1}-x_\tau}.
\end{equation*}
The \ref{eq:SCG} update implies $x_t-x_{t-1}=\eta(v_{t-1}-x_{t-1})$ with $x_{t-1},v_{t-1}\in\mathcal D$, hence
\begin{equation*}
\norm{x_t-x_{t-1}} \leq \eta D_{\mathcal D}.
\end{equation*}
Therefore
\begin{equation*}
\Exp{\dualnorm{\mu_t}}
\leq
\Exp{\dualnorm{\sum_{\tau=1}^t b_{\tau,t}\delta_\tau}}
+ \eta L D_{\mathcal D} \sum_{\tau=2}^t c_{\tau,t}.
\end{equation*}
Since $(\delta_\tau)_{\tau=1}^t$ is a martingale difference sequence in $\Bs$, the same application of \Cref{thm:banach_martingale,lem:make_smoothness_weaker} as in \Cref{lem:mom:deviation_bound} yields
\begin{equation*}
\Exp{\dualnorm{\sum_{\tau=1}^t b_{\tau,t}\delta_\tau}}
\leq
\varsym K \pare{\sum_{\tau=1}^t b_{\tau,t}^r}^{1/r}.
\end{equation*}
Summing over $t$ and using the same geometric-series estimates as in \Cref{lem:mom:deviation_bound} proves the claim.
\end{proof}

We are now ready to state the convergence theorem for \ref{eq:SCG}.

\begin{theorem}[Frank-Wolfe gap bound for \ref{eq:SCG}]
\label{thm:scg_gap}
Suppose \Cref{assum:lsmooth,assum:noise} hold, and that $(\Bc,\norm\cdot)$ is $q$-uniformly convex with $q$-convexity constant $K$. 
Let $p'$ be the conjugate exponent of $q$, $r \coloneqq \min\set{p,p'}$, and
\begin{equation*}
\Delta_{\mathcal D} \coloneqq F(x_1) - \inf_{x \in \mathcal D} F(x).
\end{equation*}
Consider \ref{eq:SCG} with
\begin{equation*}
\begin{split}
&\eta_t \equiv \eta \coloneqq \min\set{1,\sqrt{\tfrac{\Delta_{\mathcal D}(1-\beta)}{L D_{\mathcal D}^2 T}}}, \\
&\beta_t \equiv \beta \coloneqq
1-\min\set{1,\max\set{
\pare{\tfrac{\Delta_{\mathcal D}L}{\varsym^2 K^2 T}}^{\tfrac{r}{3r-2}},
T^{-\tfrac{r}{2r-1}}
}}.
\end{split}
\end{equation*}
Then
\begin{equation*}
\tfrac 1 T \textstyle\sum_{t=1}^T \Exp{g_{\mathcal D}(x_t)}
\leq
\tfrac{\Delta_{\mathcal D}}{T}
+ 3 D_{\mathcal D}\sqrt{\tfrac{\Delta_{\mathcal D}L}{T}}
+ 3 D_{\mathcal D}\pare{\tfrac{\Delta_{\mathcal D}L(\varsym K)^{\tfrac{r}{r-1}}}{T}}^{\tfrac{r-1}{3r-2}}
+ 2 D_{\mathcal D}\tfrac{\varsym K}{T^{\tfrac{r-1}{2r-1}}}.
\end{equation*}
\end{theorem}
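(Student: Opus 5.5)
The plan is to sum \Cref{lem:scg_descent} over $t = 1, \dots, T$ and insert \Cref{lem:scg_error}. The function values telescope to $\Exp{F(x_1) - F(x_{T+1})} \leq \Delta_{\mathcal D}$, which holds because every iterate of \ref{eq:SCG} stays in $\mathcal D$ (it is a convex combination of points of $\mathcal D$). Dividing by $\eta T$ then gives
\begin{equation*}
\tfrac 1 T \textstyle\sum_{t=1}^T \Exp{g_{\mathcal D}(x_t)}
\leq \tfrac{\Delta_{\mathcal D}}{\eta T}
+ \tfrac{D_{\mathcal D}}{T}\Big(\tfrac{\varsym K}{1-\beta} + \varsym K T (1-\beta)^{\frac{r-1}{r}} + \tfrac{\eta L D_{\mathcal D} T}{1-\beta}\Big)
+ \tfrac L 2 \eta D_{\mathcal D}^2 .
\end{equation*}
The $\frac L2 \eta D_{\mathcal D}^2$ term is dominated by $\frac{\eta L D_{\mathcal D}^2}{1-\beta}$.

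Next comes the stepsize. Since $1/\min\{1,a\} \leq 1 + 1/a$, we have $\frac{\Delta_{\mathcal D}}{\eta T} \leq \frac{\Delta_{\mathcal D}}{T} + D_{\mathcal D}\sqrt{\frac{\Delta_{\mathcal D} L}{T(1-\beta)}}$. Also $\eta \leq \sqrt{\Delta_{\mathcal D}(1-\beta)/(L D_{\mathcal D}^2 T)}$, which gives $\frac{\eta L D_{\mathcal D}^2}{1-\beta} \leq D_{\mathcal D}\sqrt{\frac{\Delta_{\mathcal D}L}{T(1-\beta)}}$. After this step the bound reads
\begin{equation*}
\tfrac{\Delta_{\mathcal D}}{T} + 2.5\, D_{\mathcal D}\sqrt{\tfrac{\Delta_{\mathcal D}L}{T(1-\beta)}} + \tfrac{D_{\mathcal D}\varsym K}{T(1-\beta)} + D_{\mathcal D}\varsym K(1-\beta)^{\frac{r-1}{r}} .
\end{equation*}

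The momentum is handled exactly as in the proof of \Cref{thm:main_lzlo}, now with only two candidates $u = (\Delta_{\mathcal D}L/(\varsym^2K^2T))^{r/(3r-2)}$ and $w = T^{-r/(2r-1)}$.

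\begin{itemize}
\item From $1-\beta \geq \min\{1,u\}$ we get $\frac{1}{\sqrt{1-\beta}} \leq 1 + u^{-1/2}$. Since $\sqrt{\Delta_{\mathcal D}L/T}\,u^{-1/2} = \big(\Delta_{\mathcal D}L(\varsym K)^{r/(r-1)}/T\big)^{(r-1)/(3r-2)}$, the square-root term splits into a $\sqrt{\Delta_{\mathcal D}L/T}$ part and this $U$-type part.
\item From $1-\beta \geq w$ (when $w \leq 1$; otherwise $1-\beta = 1$, trivially fine) we get $\frac{\varsym K}{T(1-\beta)} \leq \varsym K\, T^{-(r-1)/(2r-1)}$.
\item Using $(1-\beta)^{\alpha} \leq u^\alpha + w^\alpha$ with $\alpha = \frac{r-1}{r}$ gives another $U$ term plus another $W$ term.
\end{itemize}

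The only real care needed is bookkeeping of constants so the final coefficients come out to at most $3$, $3$, $2$. The $\sqrt{\cdot}$ term carries coefficient $2.5$ from the previous step, so it contributes $2.5\sqrt{\Delta_{\mathcal D}L/T} + 2.5U$, and the last term adds one more $U$. That totals $3.5U$, slightly over the claimed $3$, so I would tighten the earlier estimate. One option is to bound $\frac L2\eta D_{\mathcal D}^2$ separately as $\frac12 D_{\mathcal D}\sqrt{\Delta_{\mathcal D}L/T}$, using $1-\beta \leq 1$, rather than absorbing it into the $(1-\beta)^{-1/2}$ term. The $U$-coefficient is then $2+1=3$, the $\sqrt{\Delta_{\mathcal D}L/T}$ coefficient is $2.5 \leq 3$, and the $W$ coefficient is $1+1=2$. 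This matches the statement. This constant-tracking is the main (mild) obstacle; the structure is identical to \Cref{thm:main_lzlo}.
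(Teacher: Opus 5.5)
Your proposal is correct and matches the paper's proof: sum \Cref{lem:scg_descent}, insert \Cref{lem:scg_error}, handle the stepsize via $1/\min\{1,\bar\eta\} \leq 1 + 1/\bar\eta$, and split the momentum terms using $u$ and $w$. Your corrected bookkeeping is also exactly what the paper does: bound $\frac L2 \eta D_{\mathcal D}^2 \leq \frac12 D_{\mathcal D}\sqrt{\Delta_{\mathcal D}L/T}$ separately rather than absorbing it into the $(1-\beta)^{-1/2}$ term, which gives coefficients $2.5$, $3$, $2$; the paper simply rounds $2.5$ up to $3$.
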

\begin{proof}
Summing \Cref{lem:scg_descent} and telescoping gives
\begin{equation*}
\eta \textstyle\sum_{t=1}^T \Exp{g_{\mathcal D}(x_t)}
\leq
\Delta_{\mathcal D}
+ \eta D_{\mathcal D}\textstyle\sum_{t=1}^T \Exp{\dualnorm{\mu_t}}
+ \tfrac L 2 \eta^2 D_{\mathcal D}^2 T.
\end{equation*}
Plugging in \Cref{lem:scg_error} yields
\begin{equation*}
\tfrac 1 T \textstyle\sum_{t=1}^T \Exp{g_{\mathcal D}(x_t)}
\leq
\tfrac{\Delta_{\mathcal D}}{\eta T}
+ D_{\mathcal D}\tfrac{\varsym K }{T(1-\beta)}
+ D_{\mathcal D}\varsym K (1-\beta)^{\tfrac{r-1}{r}}
+ \tfrac{\eta L D_{\mathcal D}^2}{1-\beta}
+ \tfrac{L \eta D_{\mathcal D}^2}{2}.
\end{equation*}
Let $\bar\eta \coloneqq \sqrt{\tfrac{\Delta_{\mathcal D}(1-\beta)}{L D_{\mathcal D}^2 T}}$, so that $\eta=\min\set{1,\bar\eta}$. 
We have that $\frac{1}{\eta} \leq 1 + \frac{1}{\bar\eta}$ and $\eta \leq \bar\eta$.
Consequently,
\begin{equation*}
\tfrac{\Delta_{\mathcal D}}{\eta T}
\leq
\tfrac{\Delta_{\mathcal D}}{T}
+ D_{\mathcal D}\sqrt{\tfrac{\Delta_{\mathcal D}L}{T(1-\beta)}},
\quad
\tfrac{\eta L D_{\mathcal D}^2}{1-\beta}
\leq
D_{\mathcal D}\sqrt{\tfrac{\Delta_{\mathcal D}L}{T(1-\beta)}},
\quad
\tfrac{L \eta D_{\mathcal D}^2}{2}
\leq
\tfrac{D_{\mathcal D}}{2}\sqrt{\tfrac{\Delta_{\mathcal D}L(1-\beta)}{T}}
\leq
\tfrac{D_{\mathcal D}}{2}\sqrt{\tfrac{\Delta_{\mathcal D}L}{T}}.
\end{equation*}
Next, let
\begin{equation*}
U \coloneqq \pare{\tfrac{\Delta_{\mathcal D}L(\varsym K)^{\tfrac{r}{r-1}}}{T}}^{\tfrac{r-1}{3r-2}},
\qquad 
W \coloneqq \tfrac{\varsym K}{T^{\tfrac{r-1}{2r-1}}},
\qquad
u \coloneqq \pare{\tfrac{\Delta_{\mathcal D}L}{\varsym^2 K^2 T}}^{\tfrac{r}{3r-2}},
\qquad
w \coloneqq T^{-\tfrac{r}{2r-1}},
\end{equation*}
so that $1-\beta = \min\set{1,\max\set{u,w}}$. Hence
\begin{equation*}
\tfrac{1}{\sqrt{1-\beta}}
\leq
\tfrac{1}{\min\set{1,\sqrt u}}
\leq
1 + \tfrac{1}{\sqrt u},
\end{equation*}
which implies
\begin{equation*}
2D_{\mathcal D}\sqrt{\tfrac{\Delta_{\mathcal D}L}{T(1-\beta)}}
+ \tfrac{D_{\mathcal D}}{2}\sqrt{\tfrac{\Delta_{\mathcal D}L}{T}}
\leq
3D_{\mathcal D}\sqrt{\tfrac{\Delta_{\mathcal D}L}{T}} + 2D_{\mathcal D}U.
\end{equation*}
Moreover, since $1-\beta \geq w$ and $\beta \leq 1$,
\begin{equation*}
D_{\mathcal D}\tfrac{\varsym K }{T(1-\beta)}
\leq
D_{\mathcal D}W,
\end{equation*}
and, using $(1-\beta)^\alpha \leq u^\alpha + w^\alpha$ for $\alpha = \tfrac{r-1}{r}$,
\begin{equation*}
D_{\mathcal D}\varsym K (1-\beta)^{\tfrac{r-1}{r}}
\leq
D_{\mathcal D}U + D_{\mathcal D}W.
\end{equation*}
Combining the bounds gives
\begin{equation*}
\tfrac 1 T \textstyle\sum_{t=1}^T \Exp{g_{\mathcal D}(x_t)}
\leq
\tfrac{\Delta_{\mathcal D}}{T}
+ 3D_{\mathcal D}\sqrt{\tfrac{\Delta_{\mathcal D}L}{T}}
+ 3D_{\mathcal D}U
+ 2D_{\mathcal D}W,
\end{equation*}
which is exactly the stated claim.
\end{proof}

\section{Lower Bound Proofs}
\label{sec:app.lower_bounds}
In this section we carry the missing proof of \Cref{thm:lower_bound}. Therefore let us recite the definitions of \Cref{sec:main.lb} for the reader's convenience. Let $m, n \in \N_{\geq 2}$ and set
\begin{equation*}
    \kappa \coloneqq \min\set{m, n} \geq 2
\end{equation*}
Further let $(e_i)_{i=1}^m$ and $(f_j)_{j=1}^n$ are the standard bases of $\R^m$ and $\R^n$ respectively, and 
\begin{equation*}
    E_i \coloneqq e_i f_i^\top \in \R^{m \times n}
\end{equation*}
For a sign vector $s \in \set{\pm 1}^\kappa$, define
\begin{equation*}
    S_s \coloneqq \sum_{i = 1}^\kappa s_i E_i.
\end{equation*}
Fix $\delta > 0$ to be specified later and $q \coloneqq \pare{\frac{2 \delta}{\varsym}}^{\frac{p}{p-1}}$. Then the hard instance is given by
\begin{equation*}
    F_s(x) \coloneqq \frac{L}{2\kappa} \fnorm{x}^2 - \frac{\delta}{\kappa} \langle S_s, x\rangle_F, \qquad
    \nabla f_s (x, \xi) 
    \frac L \kappa x - \frac{\delta}{q} B s_I E_I,
\end{equation*}
where $B \sim \operatorname{Bern}(q)$ and $I \sim \operatorname{Unif}([\kappa])$ independently.

Now we first verify that $F_s$ and $\nabla f_s$ satisfy the required properties.
\begin{lemma}\label{lem:hard_instance_properties}
    Assume $\delta \leq \nicefrac \sigma 2$. Then the function $F_s$ is $L$-smooth with $F(0) - \inf_{x} F_s(x) \leq \frac{\delta^2}{2L}$ and the gradient oracle $\nabla f_s$ is unbiased and satisfies $\Exp{\nucnorm{\nabla f_s (x, \xi) - \nabla F_s(x)}^p} \leq \sigma^p$.
\end{lemma}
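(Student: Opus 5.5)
Three things need to be checked, each by a direct computation: $L$-smoothness of $F_s$ with respect to the spectral norm, the initial suboptimality bound, and unbiasedness together with the $p$-th moment bound of the oracle in the nuclear norm.

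\textbf{Smoothness.} We have $\nabla F_s(x) = \frac{L}{\kappa} x - \frac{\delta}{\kappa} S_s$, so $\nabla F_s(x) - \nabla F_s(y) = \frac L\kappa (x-y)$. By \Cref{lem:schatten_norm_equivalence_constants} with exponents $1$ and $\infty$,
\begin{equation*}
\nucnorm{x-y} \leq \kappa \opnorm{x-y}.
\end{equation*}
Hence $\nucnorm{\nabla F_s(x)-\nabla F_s(y)} \leq L\opnorm{x-y}$, which is exactly \lsmooth\ in the pair $(\opnorm\cdot, \nucnorm\cdot)$.

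\textbf{Suboptimality.} $F_s$ is a strongly convex quadratic with minimiser $x^* = \frac{\delta}{L} S_s$. Since $\fnorm{S_s}^2 = \kappa$,
\begin{equation*}
F_s(0) - \inf_x F_s(x) = \frac{\kappa}{2L}\fnorm{\tfrac{\delta}{\kappa} S_s}^2 = \frac{\delta^2}{2L}.
\end{equation*}

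\textbf{Unbiasedness.} We have $\Exp{B s_I E_I} = q \cdot \frac 1\kappa \sum_i s_i E_i = \frac{q}{\kappa} S_s$. Therefore $\Exp{\nabla f_s(x,\xi)} = \frac L\kappa x - \frac{\delta}{\kappa} S_s = \nabla F_s(x)$.

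\textbf{Moment bound.} This step carries the actual content. The noise is
\begin{equation*}
Z \coloneqq \nabla f_s(x,\xi) - \nabla F_s(x) = \delta\pare{\tfrac1\kappa S_s - \tfrac{B}{q} s_I E_I},
\end{equation*}
and it is independent of $x$. Split on the value of $B$.

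If $B=0$, which has probability $1-q$, then $Z = \frac\delta\kappa S_s$ and $\nucnorm{Z} = \delta$.

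If $B=1$, which has probability $q$, then $Z$ is diagonal with entry $\delta s_I\pare{\frac1\kappa - \frac1q}$ at position $I$ and entries $\frac\delta\kappa s_i$ elsewhere. Its nuclear norm is the sum of the absolute diagonal entries:
\begin{equation*}
\nucnorm Z = \delta\pare{\tfrac{\kappa-1}{\kappa} + \big|\tfrac1q - \tfrac1\kappa\big|} \leq \tfrac{\delta}{q} + \delta \leq \tfrac{2\delta}{q},
\end{equation*}
using $q \leq 1$.

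Combining the two cases,
\begin{equation*}
\Exp{\nucnorm Z^p} \leq (1-q)\delta^p + q^{1-p}(2\delta)^p.
\end{equation*}
The choice $q = (2\delta/\varsym)^{p/(p-1)}$ gives $q^{1-p} = (\varsym/(2\delta))^{p}$, so the second term equals $\varsym^p$. This exceeds the budget by the first term, so the constants need adjusting. My first attempt is to bound more carefully, using the exact value $\nucnorm Z = \delta\pare{\frac{\kappa-1}{\kappa} + \frac1q - \frac1\kappa}$ when $B=1$ together with $\delta \leq \varsym/2$, and to aim for the Rademacher-type estimate $(1-q)\delta^p + q(\delta + \delta/q)^p \leq \varsym^p$.

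\textbf{Main obstacle.} The delicate point is this constant bookkeeping. If the crude bound above does not close, I would fall back on one of two fixes. The first is to use centring, bounding $\Exp{\nucnorm{Z}^p} \leq 2^{p-1}\pare{\delta^p + q^{1-p}\delta^p}$, and then absorb the factor $2$ into the definition of $q$; this is the choice behind the $\frac{\sigma}{4\eps}$ appearing in \Cref{thm:lower_bound}. The second is to verify that $\delta \leq \varsym/2$ forces $q \leq 1$ and that the resulting slack suffices. If the constant still does not close, I would weaken the statement's constant slightly, for example by replacing $2\delta$ with $4\delta$ in $q$, which only changes absolute constants in \Cref{thm:lower_bound}.
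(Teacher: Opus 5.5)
Your treatment of smoothness, the initial gap and unbiasedness matches the paper's proof. The paper also uses $\nucnorm{\cdot} \le \kappa \opnorm{\cdot}$, completes the square with $\fnorm{S_s}^2 = \kappa$, and computes $\Exp{B s_I E_I} = \frac{q}{\kappa} S_s$. The gap is in the moment bound, which you do not finish. Your case split gives a bound that exceeds $\sigma^p$. You then list repairs without carrying any of them out. Your last resort, replacing $2\delta$ by $4\delta$ in $q$, would change the lemma rather than prove it. In fact no repair is needed: the stated constants hold.

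Your ``centring'' option is exactly the paper's argument, and it closes with the given $q$. We have $\nucnorm{\tfrac{\delta}{\kappa}S_s} = \delta$. Since $q^{p-1} = (2\delta/\sigma)^p$,
\[
\Exp{\nucnorm{\tfrac{\delta}{q}B s_I E_I}^p} = q\pare{\tfrac{\delta}{q}}^p = \delta^p q^{1-p} = \sigma^p 2^{-p}.
\]
Using $\delta \le \sigma/2$, this gives
\[
\Exp{\nucnorm{Z}^p} \le 2^{p-1}\pare{\delta^p + \sigma^p 2^{-p}} \le 2^{p-1}\cdot 2 \cdot \sigma^p 2^{-p} = \sigma^p .
\]
The factor $2$ you wanted to absorb is already inside $q = (2\delta/\sigma)^{p/(p-1)}$.

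Your direct case split also closes once you keep the slack. The loss came from bounding $\delta + \delta/q$ by $2\delta/q$. On $\set{B=1}$ the exact value is $\nucnorm{Z} = \delta\pare{1 + \tfrac1q - \tfrac2\kappa} \le \delta\pare{1+\tfrac1q}$. Using $\delta^p = \tfrac{\sigma^p}{2^p}q^{p-1}$, $q^{p-1}\le 1$ and $(1+q)^{p-1} \le 2^{p-1}$,
\[
\Exp{\nucnorm{Z}^p} \le \frac{\sigma^p}{2^p}\pare{(1-q)q^{p-1} + (1+q)^p} \le \frac{\sigma^p}{2^p}\pare{1 + 2^{p-1} + q\pare{2^{p-1}-1}} \le \sigma^p ,
\]
where the last step uses $q \le 1$, which is equivalent to $\delta \le \sigma/2$.

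This exact computation is slightly tighter than the paper's convexity inequality $(a+b)^p \le 2^{p-1}(a^p+b^p)$. However, both bounds equal $\sigma^p$ at $\delta = \sigma/2$, so neither improves the constants in \Cref{thm:lower_bound}. The paper's route is shorter because it never needs the explicit singular values of $Z$.
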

\begin{proof}
    The smoothness is immediate by noting that $\nucnorm{\nabla F_s(x) - \nabla F_s(y)} = \frac L \kappa \nucnorm{x - y} \leq L \opnorm{x-y}$ for all $x, y \in \R^{m \times n}$. To prove the bounded initialisation gap, we complete the square to get
    \begin{equation*}
        F_s(x) = \frac{L}{2\kappa} \fnorm{x - \frac{\delta}{L} S_s}^2 - \frac{\delta^2}{2L\kappa} \fnorm{S_s},
    \end{equation*}
    and hence $F_s(x) \geq - \frac{\delta^2}{2L}$. Unbiasedness again directly follows from the independence of $B$ and $I$ and the definition of $q$. Finally, we have
    \begin{equation*}
        \Exp{\nucnorm{\nabla f(x, \xi) - \nabla F(x)}^p}
        = \Exp{\nucnorm{\frac \delta \kappa S_s - \frac{\delta}{q} B s_I E_I}^p}
        \leq 2^{p-1} \pare{\delta^p + \Exp{\nucnorm{\frac \delta q B s_I E_I}^p}}
    \end{equation*}
    and
    \begin{equation*}
        \Exp{\nucnorm{\frac \delta q B s_I E_I}^p}
        = q \pare{\frac{\delta} q}^p\Exp{\nucnorm{s_I E_I}^p}
        = q \pare{\frac{\delta} q}^p 
        = \frac{\delta^p}{q^{p-1}}
        = \frac{\sigma^p}{2^p}.
    \end{equation*}
    Hence the claim follows from our assumption $\delta \leq \frac{\sigma}{2}$.
\end{proof}

Additionally we need the following technical lemmata.

\begin{lemma}\label{lem:von_Neuman_rectangular_matricies}
    For every $A \in \R^{m \times n}$, $\nucnorm{A} \geq \sum_{i = 1}^\kappa \abs{A_{ii}}$.
\end{lemma}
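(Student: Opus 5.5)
The plan is to use the variational (duality) characterisation of the nuclear norm as the dual of the spectral norm, so that a single well-chosen test matrix gives the bound. Recall that $\nucnorm{A} = \sup_{\opnorm{D} \leq 1} \angles{A, D}_F$, where $\angles{A, D}_F = \operatorname{tr}(A^\top D)$. This follows directly from the singular value decomposition $A = U \Sigma V^\top$. The upper bound $\angles{A, D}_F \leq \opnorm{D}\nucnorm{A}$ comes from von Neumann's trace inequality. Equality is attained at $D = UV^\top$, with $U, V$ the (thin) singular vector matrices. Since we only need the lower bound $\nucnorm{A} \geq \angles{A, D}_F$ for one admissible $D$, only the easy direction is required. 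That direction itself follows from writing $\angles{A, D}_F = \sum_j \sigma_j \angles{u_j v_j^\top, D}_F = \sum_j \sigma_j\, u_j^\top D v_j$ and bounding $\abs{u_j^\top D v_j} \leq \opnorm{D}$.

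With this in hand, choose the diagonal sign matrix
\[
D \coloneqq \sum_{i=1}^{\kappa} \operatorname{sgn}(A_{ii}) E_i \in \R^{m \times n}, \qquad E_i = e_i f_i^\top,
\]
using the convention $\operatorname{sgn}(0) = 0$. Its only nonzero entries lie on the leading diagonal and have modulus at most one. Hence its singular values are exactly $\abs{\operatorname{sgn}(A_{ii})} \leq 1$, which gives $\opnorm{D} \leq 1$. One sees this directly by noting that $D^\top D$ is diagonal with entries in $\set{0,1}$.

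Computing the pairing,
\[
\angles{A, D}_F = \sum_{i=1}^\kappa \operatorname{sgn}(A_{ii})\, \angles{A, E_i}_F = \sum_{i=1}^\kappa \operatorname{sgn}(A_{ii}) A_{ii} = \sum_{i=1}^\kappa \abs{A_{ii}},
\]
because $\angles{A, e_i f_i^\top}_F = e_i^\top A f_i = A_{ii}$. Combining this with the duality bound yields $\nucnorm{A} \geq \sum_{i=1}^\kappa \abs{A_{ii}}$.

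The main (mild) obstacle is to justify the inequality $\angles{A, D}_F \leq \nucnorm{A}\opnorm{D}$ rigorously for rectangular matrices. I would do this through the SVD-expansion argument above, so that the general von Neumann theorem is not needed. Everything else is a direct computation.
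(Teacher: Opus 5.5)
Your proposal is correct and matches the paper's proof: the paper also takes the diagonal sign matrix $U = \sum_{i=1}^\kappa \operatorname{sgn}(A_{ii}) E_i$, notes $\opnorm{U} \leq 1$, and concludes via the duality bound $\sum_{i=1}^\kappa \abs{A_{ii}} = \langle U, A\rangle_F \leq \opnorm{U}\nucnorm{A}$. Your SVD-based derivation of that duality inequality is a correct extra step; the paper simply invokes the duality of the nuclear and operator norms.
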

\begin{proof}
    Let $\omega_i \coloneqq \sgn(A_{ii})$ for $i = 1, \ldots, \kappa$ and $U \coloneqq \sum_{i=1}^\kappa \omega_i E_i$. Then $\opnorm{U} \leq 1$ and, by duality of the nuclear and operator norm,
    $
        \sum_{i=1}^\kappa \abs{A_{ii}} 
        = \langle U, A\rangle_F 
        \leq \opnorm{U}\nucnorm{A}
        = \nucnorm A.
    $
\end{proof}

For a sign vector $s \in \set{\pm 1}^\kappa$ we denote the version which flips the sign at index $i$ as $s^{(i)} = (s_1, \ldots, s_{i-1}, -s_i, s_{i+1}, \ldots, s_\kappa)$. The following lemma says that, as long as $(B_t, I_t) = (1, i)$ did not yet happen, the iterates of an algorithm run on $F_s$ and $F_{s^{(i)}}$ are identical.

\begin{lemma}\label{lem:undiscovered_indices_indistinguishable}
    Let $\mathcal A$ be an optimization algorithm, $i \in [\kappa]$ and $s \in \set{\pm 1}^\kappa$. Let $B_1, \dots, B_T$ and $I_1, \dots, I_T$ be independent and run $\Ac$ on $F_s$ and $F_{s^{(i)}}$ to obtain iterates $x_1, g_1, \dots, x_T, g_T$ and $x_1', g_1', \dots, x_T', g_T'$ respectively. Furthermore let 
    \begin{equation*}
        A_i \coloneqq \bigcap_{t=1}^T \set{(I_t, B_t) \neq (i, 1)}.
    \end{equation*}
    Then we have $\mathds{1}_{A_i} (x_t, g_t) = \mathds{1}_{A_i} (x_t', g_t')$ for all $t \in [T]$.
\end{lemma}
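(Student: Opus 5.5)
Since $\Ac$ is an arbitrary (possibly randomised) first-order algorithm, we view it as a deterministic map of its seed and the past oracle outputs: $x_1$ depends only on the seed and starting point, and $x_{t+1}$ is a fixed measurable function of the seed together with $(x_1, g_1, \dots, x_t, g_t)$. Both runs share the seed and the samples $(B_t, I_t)_{t\in[T]}$. The proof is then an induction on $t$ carried out pointwise on the event $A_i$. Off $A_i$ both sides vanish, so it suffices to show that on $A_i$ we have $x_t = x_t'$ and $g_t = g_t'$ for all $t \in [T]$.

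\textbf{Base case.} First, $x_1 = x_1'$ holds trivially, since both are produced from the same seed and starting point before any oracle query.

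\textbf{Inductive step.} Suppose that on $A_i$ we have $(x_\tau, g_\tau) = (x_\tau', g_\tau')$ for all $\tau < t$, and that $x_t = x_t'$. The two oracle outputs are
\begin{equation*}
    g_t = \frac{L}{\kappa} x_t - \frac{\delta}{q} B_t s_{I_t} E_{I_t},
    \qquad
    g_t' = \frac{L}{\kappa} x_t' - \frac{\delta}{q} B_t s^{(i)}_{I_t} E_{I_t}.
\end{equation*}
The first terms agree because $x_t = x_t'$. For the second terms, on $A_i$ we have $(I_t, B_t) \neq (i, 1)$, which leaves two cases:
\begin{itemize}
    \item if $B_t = 0$, both second terms vanish;
    \item if $B_t = 1$ and $I_t \neq i$, then $s^{(i)}_{I_t} = s_{I_t}$ by the definition of $s^{(i)}$.
\end{itemize}
In either case $g_t = g_t'$. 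Finally, $x_{t+1}$ and $x_{t+1}'$ are the same function of identical histories and the same seed, so they coincide, which closes the induction.

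\textbf{Main obstacle.} The only delicate point is formalising that the algorithm's iterates are determined by its history and seed. Assuming a possibly randomised algorithm, this is exactly the framework of \citet{LowerBoundsNonConvex2023arjevani}: randomisation enters only through a seed $r$ independent of the oracle samples, and that seed is shared across the two runs. With this in place, multiplying by $\mathds 1_{A_i}$ yields the claimed identities pointwise.
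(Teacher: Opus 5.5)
Your proposal is correct and follows the same induction as the paper: on $A_i$ either $B_t = 0$ or $I_t \neq i$, so the two oracle outputs coincide, and identical histories with the shared seed yield identical next iterates. Your version is slightly more explicit than the paper's, which in the base case only notes $x_1 = x_1'$ and leaves the case split behind $g_t = g_t'$ implicit.
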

\begin{proof}
    This follows by induction. For $t = 1$ the claim is trivial as $x_1 = x_1'$. Now assume the claim holds for some $t \in [T-1]$. Then, on the event $A_i$, we have $\nabla f_s(x_t, \xi_t) = \nabla f_{s^{(i)}}(x_t', \xi_t)$ and hence $x_{t+1} = x_{t+1}'$.
\end{proof}

Finally we need a technical lemma which roughly states that, for $s \sim \unif\pare{\set{\pm 1}^\kappa}$, as long as the algorithm has not seen an index $i$, $s_i$ is still uniformly distributed conditioned on the history.

\begin{lemma}\label{lem:still_uniform_if_undiscovered}
    Let $\Ac$ be an optimization algorithm with internal randomness $r$. Let $s \sim \unif\pare{\set{\pm 1}^\kappa}$ and denote the sequence generated by $\Ac$ on $F_s$ as $(x_1, g_1), \dots, (x_T, g_T)$. Furthermore define the observed randomness $\mathcal H_T \coloneqq \sigma\pare{\set{r, g_1, \dots, g_T}}$ and 
    \begin{equation*}
        A_i \coloneqq \bigcap_{t=1}^T \set{(I_t, B_t) \neq (i, 1)}.
    \end{equation*}
    Finally assume that $r, s, B_1, I_1, \dots, B_T, I_T$ are mutually independent. Then
    \begin{equation*}
        \mathds{1}_{A_i} \Exp[\mathcal H_T]{s_i} = 0 \qquad \text{a.s.}
    \end{equation*}
    In particular, for any $\mathcal H_\li$-measurable $\nu$ and Borel-measurable function $\psi \colon \R \times \set{\pm 1} \to \R$, we have
    \begin{equation*}
        \mathds{1}_{A_i} \Exp[\mathcal H_T]{\psi(\nu, s_i)}
        = \mathds{1}_{A_i} \frac{\psi(\nu, 1) + \psi(\nu, -1)}{2}.
    \end{equation*}
\end{lemma}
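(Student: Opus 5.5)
The plan is to show that, on $A_i$, the conditional law of $s_i$ given $\mathcal H_T$ is uniform on $\set{\pm 1}$. The natural tool is the sign-flip symmetry $s \mapsto s^{(i)}$ combined with \Cref{lem:undiscovered_indices_indistinguishable}. Write $\omega \coloneqq (r, s, (B_t, I_t)_{t \le T})$. All observed quantities $g_1, \dots, g_T$ are deterministic functions $G(\omega)$ of this tuple, since $x_t$ depends only on $r$ and past gradients. Define the involution $\Phi(\omega) \coloneqq (r, s^{(i)}, (B_t, I_t)_{t\le T})$.

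The steps, in order, are as follows.

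\emph{Step 1.} Because $s$ is uniform and independent of everything else, $\Phi$ preserves the joint law of $\omega$.

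\emph{Step 2.} The event $A_i$ depends only on $(B_t, I_t)$, so it is invariant under $\Phi$.

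\emph{Step 3.} By \Cref{lem:undiscovered_indices_indistinguishable}, applied pointwise for fixed $r$ and fixed realisation of $(B_t,I_t)$, we have $\mathds 1_{A_i}(\omega)\,G(\omega) = \mathds 1_{A_i}(\omega)\,G(\Phi(\omega))$, and $r$ is unchanged by $\Phi$. Hence on $A_i$ the generator $(r, g_1,\dots,g_T)$ of $\mathcal H_T$ is $\Phi$-invariant.

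\emph{Step 4.} Take any bounded $\mathcal H_T$-measurable $h = \phi(r, g_{1:T})$. Then
\begin{equation*}
\Exp{\mathds 1_{A_i} h\, s_i} = \Exp{(\mathds 1_{A_i} h\, s_i)\circ \Phi} = \Exp{\mathds 1_{A_i} h\,(-s_i)},
\end{equation*}
using Steps 1--3. This forces $\Exp{\mathds 1_{A_i} h s_i} = 0$.

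\emph{Step 5.} It remains to pass from this to $\mathds 1_{A_i}\Exp[\mathcal H_T]{s_i} = 0$. The subtlety is that $A_i$ is not itself $\mathcal H_T$-measurable, so I cannot simply pull $\mathds 1_{A_i}$ inside the conditional expectation. I would try to resolve this by enlarging to $\mathcal G \coloneqq \sigma(\mathcal H_T, (B_t,I_t)_{t})$, or equivalently by working with the conditional law given $(r, (B_t,I_t), g)$. Repeating Step 4 with $h$ ranging over bounded $\mathcal G$-measurable functions, which are also $\Phi$-invariant on $A_i$, gives $\mathds 1_{A_i}\Exp[\mathcal G]{s_i} = 0$, since $A_i \in \mathcal G$. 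The claim for $\mathcal H_T$ then follows in the form the later proof needs: on $A_i$, the conditional distribution of $s_i$ given the history is uniform. If the statement must literally hold with $\mathcal H_T$, I will check that $g_t$ reveals $(B_t, I_t)$ whenever $B_t = 1$, so that the relevant part of $A_i$ is recoverable from the history. Alternatively, I will interpret $\mathcal H_T$ as including the sampled indices, which the later coupon-collector argument can accommodate.

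\emph{Step 6.} For the \enquote{in particular} part, $s_i \in \set{\pm1}$ lets me write
\begin{equation*}
\psi(\nu, s_i) = \frac{\psi(\nu,1)+\psi(\nu,-1)}{2} + s_i\,\frac{\psi(\nu,1)-\psi(\nu,-1)}{2}.
\end{equation*}
The coefficients are $\mathcal H_T$-measurable, so I pull them out of the conditional expectation and apply the first claim. For unbounded $\psi$, integrability is either assumed or obtained by truncation.

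I expect Step 5 to be the main obstacle: handling the non-measurability of $A_i$ with respect to $\mathcal H_T$ carefully.
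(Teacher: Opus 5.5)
Your proposal is correct and is essentially the paper's argument: your measure-preserving flip $\Phi$ is the same symmetry the paper exploits by conditioning on $\mathcal K_i$ (all primitive randomness except $s_i$) and averaging over $s_i=\pm1$. Both arguments combine this with \Cref{lem:undiscovered_indices_indistinguishable} and the same affine decomposition of $\psi$.

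In Step~5, it is your second option that proves the statement as written. Indeed $A_i\in\mathcal H_T$, because $g_t-\frac{L}{\kappa}x_t=-\frac{\delta}{q}B_t s_{I_t}E_{I_t}$ with $x_t$ a function of $(r,g_1,\dots,g_{t-1})$, so the event $\set{(I_t,B_t)=(i,1)}$ can be read off the history. The paper uses this fact without comment when it identifies $\mathds{1}_{A_i}\Exp[\mathcal H_T]{s_i}$ with $\Exp[\mathcal H_T]{\mathds{1}_{A_i}s_i}$. The $\mathcal G$-enlargement alone would only suffice for the downstream proposition, not for the literal claim.
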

\begin{proof}
    Fix $i \in [\kappa]$, and define the sigma-algebra containing all primitive randomness except $s_i$ as
    \begin{equation*}
        \mathcal K_i \coloneqq \sigma \pare{r, I_1, B_1, \dots, I_T, B_T, s_1, \dots, s_{i-1}, s_{i+1}, \dots, s_\kappa}.
    \end{equation*}
    Furthermore let $M \coloneqq \pare{r, g_1, \dots, g_T}$ be the trajectory on $F_s$, and $M^{(\pm 1)}$ the counterfactual trajectories on $F_{s^{(\pm 1)}}$, where $s^{(\pm 1)} = (s_1, \dots, s_{i-1}, \pm 1, s_{i+1}, \dots, s_\kappa)$.

    Now let $Z$ be any bounded $\mathcal H_\li$-measurable random variable. Since $\mathcal H_\li = \sigma(M)$, the Doob-Dynkin Lemma supplies a bounded Borel-measurable function $h \colon \R \to \R$ such that $Z = h(M)$. Hence
    \begin{equation*}
        \Exp{\mathds{1}_{A_i} Z s_i}
        = \Exp{\Exp[\mathcal K_i]{\mathds{1}_{A_i} Z s_i}}
        = \Exp{\mathds{1}_{A_i} \frac{h(M^{(1)}) - h(M^{(-1)})}{2}},
    \end{equation*}
    where we used $A_i \in \mathcal K_i, M = M^{(s_i)}$ and that $s_i$ is independent of $\mathcal K_i$. By \Cref{lem:undiscovered_indices_indistinguishable} we have $\mathds{1}_{A_i} M^{(1)} = \mathds{1}_{A_i} M^{(-1)}$ and hence $\Exp{\mathds{1}_{A_i} Z s_i} = 0$. The definition of conditional expectation hence implies $\Exp[\mathcal H_\li]{\mathds{1}_{A_i} s_i} = 0$, which finishes the first part.

    For the second statement note that $\psi(\nu, x) = \alpha(\nu) + \beta(\nu) x$, where
    \begin{equation*}
        \alpha(\nu) = \frac{\psi(\nu, 1) + \psi(\nu, -1)}{2}, \qquad 
        \beta(\nu) = \frac{\psi(\nu, 1) - \psi(\nu, -1)}{2}
    \end{equation*}
    and thus
    \begin{equation*}
        \mathds{1}_{A_i} \Exp[\mathcal H_T]{\psi(\nu, s_i)}
        = \mathds{1}_{A_i} \alpha(\nu) + \beta(\nu) \Exp[\mathcal H_T]{\mathds{1}_{A_i} s_i}
        = \mathds{1}_{A_i} \alpha(\nu),
    \end{equation*}
    where we used the previous result in the last step. This finishes the proof.
\end{proof}

This finally allows us to provide the lower-bound on $\Exp{\nucnorm{\nabla F_s\pare{x_T}}}$. Importantly, we will choose $s$ to be uniformly distributed over $\set{\pm 1}^\kappa$, which allows us to leverage the fact that some $F_s$ are hard to differentiate as highlighted by \Cref{lem:undiscovered_indices_indistinguishable}.

\begin{proposition}\label{prop:generic_lower_bound}
    Let $s \sim \unif\pare{\set{\pm 1}^\kappa}$. Then, for every possibly randomised algorithm $\Ac$ and every $T \in \Ngeq$, we have
    \begin{equation*}
        \Exp{\nucnorm{\nabla F_s\pare{x_T}}} 
        \geq \delta \pare{1 - \frac{q}{\kappa}}^T.
    \end{equation*}
\end{proposition}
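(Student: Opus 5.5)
We reduce the nuclear norm to diagonal entries and then exploit that undiscovered coordinates have symmetric random signs. First, $\nabla F_s(x_T) = \frac{L}{\kappa} x_T - \frac{\delta}{\kappa} S_s$, so \Cref{lem:von_Neuman_rectangular_matricies} gives
\begin{equation*}
    \nucnorm{\nabla F_s(x_T)} \geq \sum_{i=1}^\kappa \abs{\frac{L}{\kappa} (x_T)_{ii} - \frac{\delta}{\kappa} s_i}
    \geq \sum_{i=1}^\kappa \mathds{1}_{A_i} \abs{\frac{L}{\kappa} (x_T)_{ii} - \frac{\delta}{\kappa} s_i},
\end{equation*}
where $A_i$ is the event from \Cref{lem:undiscovered_indices_indistinguishable}, taken over the oracle calls preceding $x_T$. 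Bounding by $T$ calls is harmless, since the indicator then only shrinks.

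Next, for each $i$ we condition on $\mathcal H_T$. The iterate $x_T$ is a measurable function of the seed $r$ and the observed gradients, so $\nu \coloneqq (x_T)_{ii}$ is $\mathcal H_T$-measurable. Apply the second part of \Cref{lem:still_uniform_if_undiscovered} with $\psi(\nu, s) = \abs{\frac{L}{\kappa}\nu - \frac{\delta}{\kappa} s}$. This yields
\begin{equation*}
    \mathds{1}_{A_i}\Exp[\mathcal H_T]{\psi(\nu,s_i)} = \mathds{1}_{A_i}\frac{1}{2}\pare{\abs{\tfrac{L}{\kappa}\nu - \tfrac{\delta}{\kappa}} + \abs{\tfrac{L}{\kappa}\nu + \tfrac{\delta}{\kappa}}} \geq \mathds{1}_{A_i}\frac{\delta}{\kappa},
\end{equation*}
where the last step is the triangle inequality. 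We need $A_i$ to be $\mathcal H_T$-measurable, or at least to pull $\mathds{1}_{A_i}$ inside the conditional expectation. The lemma is stated with $\mathds{1}_{A_i}$ outside, so we instead work with $\Exp{\mathds{1}_{A_i}\psi} = \Exp{\Exp[\mathcal H_T]{\mathds{1}_{A_i}\psi}}$.

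This measurability point is the main obstacle. One route is to observe that $A_i$ is in fact determined by the observed gradients: $(B_t, I_t) = (1, i)$ occurs exactly when $g_t - \frac{L}{\kappa}x_t$ has a nonzero $(i,i)$ entry. Hence $A_i \in \mathcal H_T$, and the identity above applies directly. If this identification proves awkward, we instead redo the argument of \Cref{lem:still_uniform_if_undiscovered} with $\mathcal K_i$. Conditioning on $\mathcal K_i$, the indicator $\mathds{1}_{A_i}$ is measurable and $\nu$ coincides on both sign branches on $A_i$ (by \Cref{lem:undiscovered_indices_indistinguishable}), so the average over $s_i = \pm 1$ gives the same bound $\delta/\kappa$.

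Taking expectations and summing over $i$ gives
\begin{equation*}
    \Exp{\nucnorm{\nabla F_s(x_T)}} \geq \frac{\delta}{\kappa}\sum_{i=1}^\kappa \Pr(A_i).
\end{equation*}
Since the pairs $(B_t, I_t)$ are i.i.d. and independent of $r$ and $s$, with $\Pr((B_t, I_t) = (1,i)) = q/\kappa$, we get $\Pr(A_i) = (1 - q/\kappa)^T$. Hence $\Exp{\nucnorm{\nabla F_s(x_T)}} \geq \delta(1-q/\kappa)^T$.

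If $x_T$ is formed only after $T-1$ oracle calls, the exponent becomes $T-1$, which only strengthens the bound.
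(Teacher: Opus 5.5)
Your proof is correct and follows essentially the same route as the paper. Both arguments use the diagonal lower bound from \Cref{lem:von_Neuman_rectangular_matricies}, restrict to the events $A_i$, symmetrise over $s_i$ via \Cref{lem:still_uniform_if_undiscovered} to get $\delta/\kappa$ per undiscovered index, and finish with $\Pr(A_i) = (1-q/\kappa)^T$. Your explicit check that $A_i \in \mathcal H_T$ (the $(i,i)$-entry of $g_t - \frac{L}{\kappa}x_t$ is nonzero exactly when $(B_t,I_t)=(1,i)$) is a welcome addition: the paper uses this fact tacitly when it writes $\Exp{\mathds{1}_{A_i}\psi} = \Exp{\mathds{1}_{A_i}\Exp[\mathcal H_T]{\psi}}$.
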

\begin{proof}
    Denote the sequence generated by $\Ac$ on $F_s$ as $(x_1, g_1), \dots, (x_T, g_T)$ and define $\mathcal H_T$ as in the previous lemma. For $i \in [\kappa]$ denote $\nu_i \coloneqq \langle E_i, x_T \rangle$ and note that $\nu_i$ is $\mathcal H_T$-measurable. Moreover, we have
    \begin{equation*}
        \langle E_i, \nabla F_s( x_T) \rangle
        = \frac{L \nu_i - \delta s_i}{\kappa}
    \end{equation*}
    and hence, by \Cref{lem:von_Neuman_rectangular_matricies},
    \begin{equation*}
        \nucnorm{\nabla F_s \pare{x_T}}
        \geq \sum_{i=1}^\kappa \abs{\langle E_i, \nabla F_s(x_T) \rangle}
        = \sum_{i=1}^\kappa \frac{\abs{L \nu_i - \delta s_i}}{\kappa}.
    \end{equation*}
    Taking expectation we obtain
    \begin{equation*}
        \Exp{\nucnorm{\nabla F_s \pare{x_T}}}
        \geq \sum_{i=1}^\kappa \Exp{\frac{\abs{L \nu_i - \delta s_i}}{\kappa}}
        \geq \sum_{i=1}^\kappa \Exp{\mathds{1}_{A_i} \frac{\abs{L \nu_i - \delta s_i}}{\kappa}}
    \end{equation*}
    and, by \Cref{lem:still_uniform_if_undiscovered},
    \begin{equation*}
        \Exp{\mathds{1}_{A_i} \frac{\abs{L \nu_i - \delta s_i}}{\kappa}}
        = \Exp{\mathds{1}_{A_i} \Exp[\mathcal H_T]{\frac{\abs{L \nu_i - \delta s_i}}{\kappa}}}
        \geq \Exp{\mathds{1}_{A_i} \frac{\abs{L \nu_i - \delta} + \abs{L\nu_i + \delta}}{2\kappa}}
    \end{equation*}
    Finally, note that $\frac{\abs{L \nu_i - \delta} + \abs{L\nu_i + \delta}}{2\kappa} \geq \frac{\delta}{\kappa}$, and $\Exp{\mathds{1}_{A_i}} = \Pr(A_i) = (1 - \frac q \kappa)^T$ to close the proof.
\end{proof}

Finally we provide the proof of \Cref{thm:lower_bound}.

\begin{proof}[Proof of \Cref{thm:lower_bound}]
    Let $\delta \coloneqq 2 \eps$ which satisfies $\delta \leq \frac \sigma 2$ by our assumption on $\eps$. By \Cref{lem:hard_instance_properties}, the function $F_s$ satisfies
    \begin{equation*}
        F(0) - \inf_{x \in \R^{m \times n}} F_s(x) \leq \frac{\delta^2}{2L} = \frac{2\eps^2} L \leq \Dz
    \end{equation*}
    by our assumption on $\eps$. Hence $F_s \in \Fc_{\Dz, L}$ and $\nabla f_s \in \Oc_{p, \sigma}$. Furthermore, for $s \sim \unif\pare{\set{\pm 1}^\kappa}$, \Cref{prop:generic_lower_bound} implies
    \begin{equation*}
        \Exp{\nucnorm{\nabla F_s\pare{\hat x_T}}} 
        \geq \delta \pare{1 - \frac{q}{\kappa}}^T
    \end{equation*}
    and hence, for the output to satisfy $\Exp{\nucnorm{\nabla F_s\pare{\hat x_T}}} \leq \eps$, we need $2 \eps \pare{1 - \nicefrac q \kappa}^T \leq \eps$. This is equivalent to
    \begin{equation*}
        \pare{1 - \frac q \kappa}^T \leq \frac 1 2
        \Leftrightarrow T \log\pare{1 - \frac q \kappa} \leq \log\pare{\frac 1 2}
        \Leftrightarrow T \geq \frac{\log(2)}{-\log\pare{1 - \frac q \kappa}}
    \end{equation*}
    Since $\kappa \geq 2$ we have $\nicefrac q \kappa \leq \nicefrac 1 2$ and thus we can use $-\log(1-x) \leq 2x \log(2)$, for $x \in [0, \nicefrac 1 2]$, to get
    \begin{equation*}
        -\log\pare{1 - \frac q \kappa} 
        \leq 2 \log(2) \frac{q}{\kappa}.
    \end{equation*}
    Combining yields
    \begin{equation*}
        T \geq
        \frac{\log\pare 2 \kappa }{2 \log \pare 2 q}
        = \frac{\kappa} 2 \cdot \pare{\frac{\sigma}{4\eps}}^{\frac p {p-1}},
    \end{equation*}
    finishing the proof.
\end{proof}

\subsection{High-Dimensional Lower-Bound}
\label{sec:app.lower_bounds.high_dim}
In this section we provide a first-order lower-bound for $\nucnorm \cdot$-stationarity of the form
\begin{equation*}
    \Omega\pare{\min\set{m, n} \frac{\Dz L}{\eps^2} \pare{\frac{\varsym}{\eps}}^{\frac{p}{p-1}}}.
\end{equation*}
As vector valued functions are a special case, it is well-known such lower-bounds require a high-dimensional construction \citep{LowerBoundsNonConvex2023arjevani}. Let us first recite the construction for the Euclidean case on $\R^d$ from \citet{LowerBoundsNonConvex2023arjevani,WhyAreAdaptive2020zhang}. For simplicity we will only consider the case of zero-respecting algorithms, the extension to randomised algorithms follows from the same techniques as in \citep{LowerBoundsNonConvex2023arjevani}.

Therefore we first introduce the Euclidean $\R^d$-version of the function class as
\begin{equation*}
    \Fc_{\Dz, L}^{(2)}(d) \coloneqq \set{F \colon \R^d \to \R \suchthat \norm{\nabla F(x) - \nabla F(y)}_2 \leq L \norm{x - y}_2, F(0) - \inf_{x} F(x) \leq \Dz}
\end{equation*}
and the Euclidean $\R^d$-version of the oracle class as
\begin{equation*}
    \Oc_{p, \varsym}^{(2)}(d) \coloneqq \set{\nabla f \colon \R^d \times \Xi \to \R^d \suchthat 
    \begin{gathered}
        \Exp{\nabla f(x, \xi)} = \nabla F(x), \\
        \Exp{\norm{\nabla f(x, \xi) - \nabla F(x)}_2^p} \leq \varsym^p, \forall x \in \R^d
    \end{gathered}
    }.
\end{equation*}

\begin{lemma}[\cite{LowerBoundsNonConvex2023arjevani,WhyAreAdaptive2020zhang}]\label{lem:eucl_lower_bound}
    Let $c \coloneqq 1 / 300$ and $p \in (1,2]$. For all $\Dz, L, \varsym > 0$ and $0 < \eps \leq c \min\set{\varsym \sqrt{\Dz L}}$ there exists a dimension $d \leq \Dz L \eps^{-2}$, a function $F \in \Fc_{\Dz, L}^{(2)}(d)$ and a gradient oracle $\nabla f \in \Oc_{p, \varsym}^{(2)}(d)$ such that every possibly randomised zero-respecting algorithm $\Ac$ satisfies
    \begin{equation*}
        \Exp{\norm{\nabla F(x_t)}_2} > \eps \qquad \text{for all} \qquad t \leq c^2\frac{ \Dz L}{\eps^2}\pare{\frac{c \varsym}{\eps}}^{\frac{p}{p-1}}.
    \end{equation*}
    In particular, $\nabla f(x, \xi)$ has the structure
    \begin{equation*}
        \nabla f(x, \xi) = \nabla F(x) + \pare{\frac{Z}{\theta} - 1} h(x),
    \end{equation*}
    where $\norm{h(x)}_2 \leq 46 \eps$, and $\xi \sim \operatorname{Bernoulli}(\theta)$ with $\theta = \pare{\frac \eps {c \varsym}}^{\frac{p}{p-1}}$.
\end{lemma}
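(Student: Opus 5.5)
The plan is to reproduce the probabilistic zero-chain construction of \citet{LowerBoundsNonConvex2023arjevani}, with the heavy-tailed Bernoulli oracle of \citet{WhyAreAdaptive2020zhang}, and to track constants so that $c = 1/300$ works.

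\emph{Deterministic hard function.} I would start from the Carmon et al.\ chain
\begin{equation*}
\bar F_d(x) = -\Psi(1)\Phi(x_1) + \sum_{i=2}^{d} \bigl[\Psi(-x_{i-1})\Phi(-x_i) - \Psi(x_{i-1})\Phi(x_i)\bigr],
\end{equation*}
and use its standard properties.
\begin{itemize}
\item It has $\ell_1$-Lipschitz gradient with $\ell_1 \le 152$.
\item It satisfies $\bar F_d(0) - \inf \bar F_d \le 12 d$.
\item Each partial derivative is bounded by $23$.
\item Its \emph{large-gradient property} holds: if $\mathrm{prog}(x) < d$, then $\norm{\nabla \bar F_d(x)}_2 > 1$.
\item It is a zero-chain: $\nabla \bar F_d(x)$ is supported on coordinates $\le \mathrm{prog}(x)+1$.
\end{itemize}
I then rescale to $F(x) = \frac{L\lambda^2}{\ell_1}\bar F_d(x/\lambda)$ with $\lambda \asymp \eps/L$, chosen so that $\norm{\nabla F}_2 > 2\eps$ whenever progress is incomplete. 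Next I set $d \asymp \Dz L/\eps^2$ so that $F(0)-\inf F \le \Dz$. This gives $F \in \Fc^{(2)}_{\Dz,L}(d)$ with $d \le \Dz L\eps^{-2}$. The condition $\eps \le c\sqrt{\Dz L}$ ensures $d \ge 1$.

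\emph{Oracle.} I would define $h(x)$ as the single coordinate $\mathrm{prog}(x)+1$ of $\nabla F(x)$, and set
\begin{equation*}
\nabla f(x,\xi) = \nabla F(x) + \bigl(\tfrac{Z}{\theta}-1\bigr)h(x), \qquad Z \sim \operatorname{Bernoulli}(\theta).
\end{equation*}
The three required properties then follow.
\begin{itemize}
\item Unbiasedness is immediate.
\item The bound $\norm{h(x)}_2 \le \frac{L\lambda}{\ell_1}\cdot 23 \le 46\eps$ follows from the partial-derivative bound once $\lambda$ is fixed appropriately.
\item For the $p$-th moment, $\Exp{|Z/\theta - 1|^p} \le \theta^{1-p} + 1 \le 2\theta^{1-p}$. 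Hence $\Exp{\norm{\nabla f - \nabla F}_2^p} \le 2 (46\eps)^p \theta^{1-p}$. With $\theta = (\eps/(c\sigma))^{p/(p-1)}$, this equals $2\cdot 46^p c^p \sigma^p \le \sigma^p$ because $c = 1/300$. The condition $\eps \le c\sigma$ ensures $\theta \le 1$.
\end{itemize}

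\emph{Progress argument.} For a zero-respecting algorithm, iterates only have support on coordinates that have appeared in past oracle outputs. Oracle outputs at $x$ reveal coordinate $\mathrm{prog}(x)+1$ only when $Z = 1$. Hence the maximal progress after $t$ queries is dominated by a sum of $t$ independent $\operatorname{Bernoulli}(\theta)$ variables. By a Chernoff bound (or directly via Markov on the sum), for $t \le d/(2\theta)$ we get $\Pr(\mathrm{prog}(x_t) \ge d) \le 1/2$. On the complementary event $\norm{\nabla F(x_t)}_2 > 2\eps$, so $\Exp{\norm{\nabla F(x_t)}_2} > \eps$. Substituting $d \asymp \Dz L/\eps^2$ and $1/\theta = (c\sigma/\eps)^{p/(p-1)}$ gives the threshold $c^2\frac{\Dz L}{\eps^2}(\frac{c\sigma}{\eps})^{p/(p-1)}$.

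The main obstacle I expect is constant bookkeeping. A single choice of $\lambda$ and $d$ must simultaneously give $\norm{h}\le 46\eps$, the $p$-moment bound, the initial gap $\le \Dz$, and the threshold with exactly $c^2$. The tail estimate for the Bernoulli progress sum is standard by comparison.
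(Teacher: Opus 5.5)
Your proposal is correct and reconstructs the construction the paper imports by citation (the paper gives no proof of its own). The ingredients match: the Carmon et al.\ chain rescaled with $\lambda = 2\ell_1\varepsilon/L$, which gives exactly $\|h\|_2 \le 2\cdot 23\,\varepsilon = 46\varepsilon$ and gradient norm $> 2\varepsilon$ before full progress; the single-coordinate Bernoulli$(\theta)$ oracle of Arjevani et al.\ with Zhang et al.'s heavy-tailed choice of $\theta$; and a progress bound, via Markov in place of their Chernoff-type lemma. The constants close as you expected: $d = \lfloor \Delta_1 L/(7296\,\varepsilon^2)\rfloor \geq 1$ gives initial gap at most $\Delta_1$ and $d/(2\theta) \geq \frac{\Delta_1 L}{29184\,\varepsilon^2}(c\sigma/\varepsilon)^{p/(p-1)}$, which exceeds the stated $c^2$ threshold, provided the $\mathrm{prog}$ used in $h$ has threshold at most $1/2$ (e.g.\ $\mathrm{prog}_{1/4}$) so the zero-chain property holds.
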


For $(\R^{m \times n}, \opnorm \cdot)$, the corresponding classes are defined as
\begin{align*}
    \Fc_{\Dz, L}(m, n)
    &\coloneqq \set{F \colon \R^{m\times n} \to \R
    \suchthat
    \begin{gathered}
        F(0) - \inf_{X} F(X) \leq \Dz,\\
        \nucnorm{\nabla F(X) - \nabla F(Y)} \leq L \opnorm{X - Y}
    \end{gathered}
    },\\
    \Oc_{p, \varsym}(m, n)
    &\coloneqq \set{\nabla f \colon \R^{m\times n} \times \Xi \to \R^{m\times n}
    \suchthat
    \begin{gathered}
        \Exp{\nabla f(X, \xi)} = \nabla F(X),\\
        \Exp{\nucnorm{\nabla f(X, \xi) - \nabla F(X)}^p} \leq \varsym^p
    \end{gathered}
    }.
\end{align*}
Next we define the quantities which will be used to embed the hard instance from $\R^d$ into $\R^{m \times n}$. Therefore let $d \in \Ngeq$, $\kappa \in \Ngeq$, and define $m \coloneqq \kappa d$ and $n \coloneqq \kappa$. Now, for $i \in [\kappa]$ and $v \in \R^d$, let
\begin{equation*}
    R_i \coloneqq \set{(i-1) d + 1, \dots, id}, \qquad
    E_i(v) \coloneqq \sum_{j \in R_i} v_{j - (i-1)d} \cdot e_j f_i^\top \in \R^{m \times n}.
\end{equation*}
That is, $E_i(v)$ places $v$ in the rows $R_i$ of its $i$-th column and is zero everywhere else. Furthermore let $P_i$ be the inverse operation, i.e.,
\begin{equation*}
    P_i(A) \coloneqq \sum_{j \in R_i} \langle e_j f_i^\top, A\rangle e_{j - (i-1)d} \in \R^d
\end{equation*}
extracts the rows $R_i$ of the $i$-th column of $A$. Then the following properties hold.

\begin{lemma}\label{lem:embedding_properties}
    Let $v_1, \dots, v_\kappa \in \R^d$, and $A \in \R^{m \times n}$. Then
    \begin{align*}
        \opnorm{\sum_{i = 1}^\kappa E_i(v_i)} = \max_{i \in [\kappa]} \norm{v_i}_2, \quad
        \nucnorm{\sum_{i = 1}^\kappa E_i(v_i)} = \sum_{i \in [\kappa]} \norm{v_i}_2, \quad \text{and} \quad
        \norm{P_i(A)}_2 \leq \opnorm{A}. 
    \end{align*}
\end{lemma}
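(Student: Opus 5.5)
The three identities all follow from one structural observation: the blocks $E_i(v_i)$ have pairwise orthogonal column and row supports. Set $M \coloneqq \sum_{i=1}^\kappa E_i(v_i)$. The $i$-th column of $M$ is the vector $u_i \in \R^m$ that equals $v_i$ on the rows $R_i$ and vanishes elsewhere. Since the index sets $R_1, \dots, R_\kappa$ are disjoint, the vectors $u_1, \dots, u_\kappa$ are pairwise orthogonal in $\R^m$, and we have $M = \sum_{i=1}^\kappa u_i f_i^\top$ with $\norm{u_i}_2 = \norm{v_i}_2$.

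\textbf{Singular values of $M$.} Discard the indices with $v_i = 0$. For the remaining indices set $\hat u_i \coloneqq u_i / \norm{u_i}_2$. This gives
\begin{equation*}
    M = \sum_i \norm{v_i}_2 \, \hat u_i f_i^\top,
\end{equation*}
where $(\hat u_i)$ is orthonormal in $\R^m$ and $(f_i)$ is orthonormal in $\R^n$. This is a singular value decomposition of $M$, and its singular values are exactly $\norm{v_i}_2$. Alternatively, one can check directly that $M^\top M = \operatorname{diag}(\norm{v_1}_2^2, \dots, \norm{v_\kappa}_2^2)$, because $(M^\top M)_{ik} = \langle u_i, u_k\rangle$. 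Taking the largest singular value gives $\opnorm{M} = \max_i \norm{v_i}_2$, and summing the singular values gives $\nucnorm{M} = \sum_i \norm{v_i}_2$.

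\textbf{The bound on $P_i$.} By construction, $P_i(A)$ is the restriction of the $i$-th column $A f_i$ to the coordinates in $R_i$. Restricting to a subset of coordinates can only decrease the Euclidean norm, so
\begin{equation*}
    \norm{P_i(A)}_2 \leq \norm{A f_i}_2 \leq \opnorm{A} \norm{f_i}_2 = \opnorm{A}.
\end{equation*}

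No step here is a real obstacle. The only care needed is in the index bookkeeping: one should confirm that $E_i(v)$ places the entries $v_1, \dots, v_d$ in rows $(i-1)d+1, \dots, id$ of column $i$, so that the supports of the $u_i$ really are disjoint. One should also confirm that zero blocks do not affect the SVD argument; this holds because they contribute zero singular values.
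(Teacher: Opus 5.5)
Your proof is correct and follows the same route as the paper. Because the columns have disjoint row supports, $M^\top M = \operatorname{diag}(\|v_1\|_2^2,\dots,\|v_\kappa\|_2^2)$, which gives the singular values $\|v_i\|_2$ and hence both norm identities, and the bound $\|P_i(A)\|_2 \le \|Af_i\|_2 \le \|A\|_{S_\infty}$ is argued exactly as in the paper.
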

\begin{proof}
    The columns of $\sum_{i = 1}^\kappa E_i(v_i)$ have disjoint row support, so
    \begin{equation*}
        \pare{\sum_{i = 1}^\kappa E_i(v_i)}^\top \pare{\sum_{i = 1}^\kappa E_i(v_i)} = \operatorname{Diag}\pare{\norm{v_1}_2^2, \dots, \norm{v_\kappa}_2^2}
    \end{equation*}
    and the singular values are hence $\norm{v_1}_2, \dots, \norm{v_\kappa}_2$, which proves the first two identities. For the last claim note that $\norm{P_i(A)}_2 \leq \norm{Af_i}_2 \leq \opnorm{A}$.
\end{proof}

This allows us to embed the hard instance from $\R^d$ into $\R^{m \times n}$ as follows. Let $F^{(2)} \in \Fc_{\Dz^{(2)}, L^{(2)}}^{(2)}(d)$ and $\nabla f^{(2)} \in \Oc_{p, \varsym^{(2)}}^{(2)}(d)$ be the hard instance from the Euclidean case. Then we define
\begin{equation}\label{eq:embedded_hard_instance}
    F(X) \coloneqq \sum_{i=1}^\kappa F^{(2)}(P_i(X)), \qquad
    \nabla f(X, \xi) \coloneqq \sum_{i=1}^\kappa E_i\pare{\nabla f^{(2)}(P_i(X), \xi_i)},
\end{equation}
where $\xi_i \iid \operatorname{Bernoulli}(\theta)$ are independent. Next we prove that this embedding --- after appropriate rescaling --- preserves the required properties.

\begin{proposition}\label{prop:satisfies_properties}
    Let $\Dz^{(2)} = \frac{\Dz}{\kappa}, L^{(2)} \coloneqq \frac L \kappa, \eps^{(2)} \coloneqq \frac \eps \kappa, \varsym^{(2)} \coloneqq \frac{\varsym}{\kappa^{1/p}}$ and $F, \nabla f$ be defined as above. Furthermore let $c \coloneqq 1 / 300$ and assume $0 < \eps \leq c \min \set{\varsym, \sqrt{\Dz L}}$. Then $F \in \Fc_{\Dz, L}(m, n)$ and $\nabla f \in \Oc_{p, \varsym}(m, n)$. 
\end{proposition}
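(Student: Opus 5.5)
We verify the four defining properties of $\Fc_{\Dz, L}(m,n)$ and $\Oc_{p,\varsym}(m,n)$ one at a time. Each reduces to the Euclidean guarantees of \Cref{lem:eucl_lower_bound}, applied with the rescaled parameters $(\Dz^{(2)}, L^{(2)}, \eps^{(2)}, \varsym^{(2)})$, combined with the embedding identities of \Cref{lem:embedding_properties}.

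First we check that the rescaled parameters satisfy the hypothesis of \Cref{lem:eucl_lower_bound}. We have $\eps^{(2)} = \eps/\kappa \leq c\sqrt{\Dz L}/\kappa = c\sqrt{\Dz^{(2)} L^{(2)}}$. We also have $\eps^{(2)} = \eps/\kappa \leq c\varsym/\kappa \leq c\varsym/\kappa^{1/p} = c\varsym^{(2)}$, since $p \geq 1$. So $F^{(2)}$ and $\nabla f^{(2)}$ exist in the rescaled classes.

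The initialisation gap is immediate. Since $P_i(0) = 0$ and the sum decouples across blocks, $F(0) - \inf F = \sum_{i} \pare{F^{(2)}(0) - \inf F^{(2)}} \leq \kappa \Dz^{(2)} = \Dz$.

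For smoothness we use the chain rule. The map $P_i$ is linear with adjoint $E_i$, so $\nabla F(X) = \sum_i E_i\pare{\nabla F^{(2)}(P_i X)}$. By \Cref{lem:embedding_properties},
\begin{equation*}
\nucnorm{\nabla F(X) - \nabla F(Y)} = \sum_i \norm{\nabla F^{(2)}(P_iX) - \nabla F^{(2)}(P_iY)}_2 \leq \sum_i L^{(2)} \norm{P_i(X-Y)}_2 \leq \kappa L^{(2)} \opnorm{X-Y} = L\opnorm{X-Y}.
\end{equation*}
Unbiasedness follows blockwise, because $\Exp{\nabla f^{(2)}(P_iX,\xi_i)} = \nabla F^{(2)}(P_iX)$.

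The noise bound is the main obstacle. Again by \Cref{lem:embedding_properties}, the nuclear norm of the deviation equals $\sum_i \norm{Y_i}_2$, where $Y_i \coloneqq \nabla f^{(2)}(P_iX,\xi_i) - \nabla F^{(2)}(P_iX)$ are independent across $i$ and satisfy $\Exp{\norm{Y_i}_2^p} \leq (\varsym^{(2)})^p = \varsym^p/\kappa$. A naive bound via convexity, $\pare{\sum_i a_i}^p \leq \kappa^{p-1}\sum_i a_i^p$, gives $\kappa^{p-1}\varsym^p$, which is too large. We therefore exploit the explicit structure $Y_i = (Z_i/\theta - 1)h(P_iX)$ with $\norm{h}_2 \leq 46\eps^{(2)}$.

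Write $\sum_i \norm{Y_i}_2 \leq \sum_i \abs{Z_i/\theta - 1}\, 46\eps/\kappa$. Then split $\abs{Z_i/\theta - 1} \leq Z_i/\theta + 1$:
\begin{itemize}
\item The deterministic part contributes at most $46\eps$.
\item The Bernoulli part is $\tfrac{46\eps}{\kappa\theta} N$ with $N \sim \operatorname{Bin}(\kappa,\theta)$.
\end{itemize}
To bound $\Exp{N^p}$, use $N^p \leq N^2$, so $\Exp{N^p} \leq \kappa\theta + \kappa^2\theta^2$. When $\kappa\theta \leq 1$ this is at most $2\kappa\theta$, and then
\begin{equation*}
\Exp{\pare{\tfrac{N}{\kappa\theta}}^p} \lesssim (\kappa\theta)^{1-p},
\end{equation*}
where $\theta = (\eps^{(2)}/(c\varsym^{(2)}))^{p/(p-1)} = (\eps/(c\varsym))^{p/(p-1)}\kappa^{-1}$. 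So $\kappa\theta = (\eps/(c\varsym))^{p/(p-1)}$, and the resulting bound is of order $\eps^p(c\varsym/\eps)^p \approx (c\varsym)^p$. With $c = 1/300$, the constants $46$ and $2^{p-1}$ are absorbed into $\varsym^p$. Here $\kappa\theta \leq 1$ holds because $\eps \leq c\varsym$.

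The only delicate point is tracking these constants. Should they fail to close, we would shrink the constant in the definition of $\varsym^{(2)}$ slightly.
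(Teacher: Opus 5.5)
Your proposal is correct and follows essentially the same route as the paper. The steps match: you verify the rescaled hypotheses of \Cref{lem:eucl_lower_bound}, decouple the initialisation gap, and obtain smoothness and the exact nuclear-norm identity for the noise from \Cref{lem:embedding_properties}. You then bound the $p$-th moment through the explicit Bernoulli structure, using $\lvert 1-\xi_i/\theta\rvert \le 1+\xi_i/\theta$, $N^p \le N^2$, and $\kappa\theta = (\varepsilon/(c\sigma))^{p/(p-1)} \le 1$.

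The constants close without any fallback, since $2^{p-1}\bigl((46\varepsilon)^p + 2(46c\sigma)^p\bigr) \le 6(46/300)^p\sigma^p \le \sigma^p$. So your hedge of shrinking $\sigma^{(2)}$ is unnecessary, and it would not be allowed anyway, because the statement fixes $\sigma^{(2)}$.
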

\begin{proof}
    It is straight-forward to check $\eps^{(2)} \leq c \min \set{\varsym^{(2)}, \sqrt{\Dz^{(2)} L^{(2)}}}$, which we require to apply \Cref{lem:eucl_lower_bound}. Hence the Euclidean hard instance exists and we calculate
    \begin{equation*}
        F(0) - \inf_{X} F(X)
        = \sum_{i=1}^\kappa F^{(2)}(0) - \inf_{x} F^{(2)}(x)
        \leq \kappa \Dz^{(2)} = \Dz,
    \end{equation*}
    and, for $X, Y \in \R^{m \times n}$,
    \begin{align*}
        \nucnorm{\nabla F(X) - \nabla F(Y)}
        =&\  \nucnorm{\sum_{i=1}^\kappa E_i\pare{\nabla F^{(2)}(P_i(X)) - \nabla F^{(2)}(P_i(Y))}}\\
        \stackAlign{\Cref{lem:embedding_properties}}{\leq} \sum_{i=1}^\kappa \norm{\nabla F^{(2)}(P_i(X)) - \nabla F^{(2)}(P_i(Y))}_2\\
        \leq&\  L^{(2)} \sum_{i=1}^\kappa \norm{P_i(X) - P_i(Y)}_2\\
        \stackAlign{\Cref{lem:embedding_properties}}{\leq} \ L^{(2)} \kappa \opnorm{X - Y} \\
        = &\ L \opnorm{X - Y}.
    \end{align*}
    This shows $F \in \Fc_{\Dz, L}(m, n)$. For the oracle properties, note that 
    \begin{equation*}
        \Exp{\nabla f(X, \xi)} = \sum_{i=1}^\kappa E_i\pare{\Exp{\nabla f^{(2)}(P_i(X), \xi_i)}} = \sum_{i=1}^\kappa E_i\pare{\nabla F^{(2)}(P_i(X))} = \nabla F(X).
    \end{equation*}
    To prove the bounded $p$-th central moment, we make use of the explicit structure of $\nabla f(X, \xi)$ to get
    \begin{align}\label{eq:satisfies_props_1}\begin{split}
        \Exp{\nucnorm{\nabla F(X) - \nabla f(X, \xi)}^p}
        =&\ \Exp{\nucnorm{\sum_{i=1}^\kappa E_i\pare{\nabla F^{(2)}(P_i(X)) - \nabla f^{(2)}(P_i(X), \xi_i)}}^p}\\
        =&\ \Exp{\nucnorm{\sum_{i=1}^\kappa E_i\pare{\pare{1 - \frac{\xi_i}{\theta}}h(P_i(X))}}^p}.
    \end{split}\end{align}
    By \Cref{lem:embedding_properties} we have 
    \begin{equation}\label{eq:satisfies_props_2}
        \nucnorm{\sum_{i=1}^\kappa E_i\pare{\pare{1 - \frac{\xi_i}{\theta}}h(P_i(X))}}
        = \sum_{i=1}^\kappa \abs{1 - \frac{\xi_i}{\theta}} \norm{h(P_i(X))}_2
        \leq 46 \eps^{(2)} \sum_{i=1}^\kappa \abs{1 - \frac{\xi_i}{\theta}}.
    \end{equation}
    Now let $N \coloneqq \sum_{i=1}^\kappa \xi_i \sim \operatorname{Binomial}(\kappa, \theta)$, and 
    \begin{equation}\label{eq:satisfies_props_q_def}
        q \coloneqq \kappa \theta = \kappa \pare{\frac{\eps^{(2)}}{c \varsym^{(2)}}}^\frac p {p-1} = \pare{\frac{\eps}{c \varsym}}^{\frac p {p-1}} \leq 1
    \end{equation}
    by our assumption $\eps \leq c \varsym$. Next,
    \begin{equation}\label{eq:satisfies_props_3}
        \frac 1 \kappa \sum_{i = 1}^\kappa \abs{1 - \frac{\xi_i}{\theta}}
        \leq 1 + \frac{\sum_{i=1}^\kappa \xi_i}{\kappa \theta}
        = 1 + \frac{N}{q}
    \end{equation}
    and hence
    \begin{align*}
        \Exp{\nucnorm{\nabla F(X) - \nabla f(X, \xi)}^p}
        \stackAlign{\eqref{eq:satisfies_props_1}, \eqref{eq:satisfies_props_2}}{\leq} \pare{\frac{46 \eps}\kappa}^p \Exp{\pare{\sum_{i=1}^\kappa \abs{1 - \frac{\xi_i}{\theta}}}^p}\\
        \stackAlign{\eqref{eq:satisfies_props_3}}{\leq} \pare{46 \eps}^p \Exp{\pare{1 + \frac{N}{q}}^p}\\
        \leq&\ 2 \pare{46 \eps}^p \pare{1 + \Exp{\pare{\frac{N^p}{q^p}}}}.
    \end{align*}
    Finally we calculate
    \begin{equation*}
        \Exp{N^p}
        \leq \Exp{N^2}
        = \kappa \theta (1 - \theta) + (\kappa \theta)^2
        = q (1 - \theta) + q^2
        \leq 2 q,
    \end{equation*}
    which yields
    \begin{equation*}
        2 \pare{46 \eps}^p \pare{1 + \Exp{\pare{\frac{N^p}{q^p}}}}
        \leq 2 \pare{46 \eps}^p \pare{1 + 2 q^{1-p}}
        \leq 6 \pare{46 c}^p \varsym^p
        \leq \varsym^p,
    \end{equation*}
    and hence the claim.
\end{proof}

Now we are ready to prove the first-order lower-bound for $\nucnorm \cdot$-stationarity.

\begin{theorem}[First-Order Lower-Bound for $\nucnorm \cdot$-Stationarity]\label{thm:high_dim_lower_bound}
    Let $c \coloneqq 1 / 300, \kappa \in \Ngeq$ and $p \in (1,2]$. For all $\Dz, L, \varsym > 0$ and $0 < \eps \leq c \min\set{\varsym, \sqrt{\Dz L}}$ there exists $m \leq \kappa \frac{\Dz L}{\eps^2}$, a function $F \in \Fc_{\Dz, L}(m, \kappa)$ and a gradient oracle $\nabla f \in \Oc_{p, \varsym}(m, \kappa)$ such that every possibly randomised zero-respecting algorithm $\Ac$ satisfies
    \begin{equation*}
        \Exp{\nucnorm{\nabla F(x_t)}} > \eps \qquad \text{for all} \qquad t \leq c^2 \cdot \min\set{m, n} \frac{\Dz L}{\eps^2}\pare{\frac{c \varsym}{\eps}}^{\frac{p}{p-1}}.
    \end{equation*}
    In particular, there does not exist a dimension-free first-order guarantee in $\nucnorm \cdot$-stationarity, and \muon\ is first-order optimal.
\end{theorem}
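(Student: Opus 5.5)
We would prove \Cref{thm:high_dim_lower_bound} by instantiating the embedding \eqref{eq:embedded_hard_instance} with the rescaled Euclidean parameters of \Cref{prop:satisfies_properties}: $\Dz^{(2)} = \Dz/\kappa$, $L^{(2)} = L/\kappa$, $\eps^{(2)} = \eps/\kappa$, $\varsym^{(2)} = \varsym/\kappa^{1/p}$. \Cref{lem:eucl_lower_bound} then supplies a dimension $d \leq \Dz^{(2)} L^{(2)} / (\eps^{(2)})^2 = \kappa \Dz L/\eps^2$. We set $m = \kappa d \leq \kappa^2 \Dz L/\eps^2$ and $n = \kappa$. (If the statement's bound $m \le \kappa \Dz L/\eps^2$ is meant literally, it refers to $d$ per block; we would track the constant honestly and note that only $\max\{m,n\} \gtrsim \min\{m,n\}\Dz L/\eps^2$ is needed.) \Cref{prop:satisfies_properties} already gives $F \in \Fc_{\Dz,L}(m,\kappa)$ and $\nabla f \in \Oc_{p,\varsym}(m,\kappa)$. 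What remains is the hardness claim.

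\textbf{Decoupling.} First we observe that a zero-respecting algorithm on the matrix instance decouples into $\kappa$ zero-respecting runs on the Euclidean instance. The oracle output $\nabla f(X,\xi) = \sum_i E_i(\nabla f^{(2)}(P_i(X),\xi_i))$ is supported on the disjoint blocks $R_i \times \{i\}$, and block $i$ depends only on $P_i(X)$ and $\xi_i$. Zero-respecting then forces $x_t$ to be supported on these blocks, with the support of $P_i(x_{t})$ inside the union of the supports of the past block-$i$ gradients. Hence for each $i$ the sequence $(P_i(x_t))_t$ is the trajectory of a (possibly randomised) zero-respecting algorithm on $(F^{(2)}, \nabla f^{(2)})$ in $\R^d$. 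Its randomness comes from the algorithm's seed and the other blocks' noise, which are independent of $(\xi_{i,t})_t$.

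\textbf{Lower bound.} By \Cref{lem:embedding_properties}, $\nucnorm{\nabla F(x_t)} = \sum_i \norm{\nabla F^{(2)}(P_i(x_t))}_2$. For $t \le c^2 \frac{\Dz^{(2)}L^{(2)}}{(\eps^{(2)})^2}\pare{\frac{c\varsym^{(2)}}{\eps^{(2)}}}^{p/(p-1)}$, \Cref{lem:eucl_lower_bound} applied to each block gives $\Exp{\norm{\nabla F^{(2)}(P_i(x_t))}_2} > \eps/\kappa$, so summing yields $\Exp{\nucnorm{\nabla F(x_t)}} > \eps$.

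\textbf{Time threshold.} Substituting the rescaled parameters, $\Dz^{(2)}L^{(2)}/(\eps^{(2)})^2 = \Dz L/\eps^2$ and $\varsym^{(2)}/\eps^{(2)} = \kappa^{1-1/p}\varsym/\eps$. Raising the latter to the power $p/(p-1)$ produces the factor $\kappa = \min\{m,n\}$, so the threshold is exactly $c^2\min\{m,n\}\frac{\Dz L}{\eps^2}\pare{\frac{c\varsym}{\eps}}^{p/(p-1)}$. Optimality of \muon\ then follows by comparison with \Cref{thm:muon_convergence}.

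\textbf{Main obstacle.} The delicate step is the decoupling argument. \Cref{lem:eucl_lower_bound} must hold for zero-respecting algorithms that receive extra independent side information (the other blocks), and each block's iterate must be a measurable function of its own zero-respecting history plus independent randomness. We would handle this by conditioning on the seed and on $(\xi_j)_{j\ne i}$, which reduces each block to a randomised zero-respecting algorithm covered by the lemma.
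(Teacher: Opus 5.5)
Your proposal is correct and follows the paper's proof essentially step for step: the same rescaled embedding \eqref{eq:embedded_hard_instance}, membership via \Cref{prop:satisfies_properties}, a per-block reduction to the zero-respecting bound of \Cref{lem:eucl_lower_bound}, and summation via \Cref{lem:embedding_properties}. Your conditioning on the seed and on $(\xi_j)_{j\neq i}$, under which the other blocks' oracle answers (which can depend on block-$i$ gradients through the algorithm) are simulated inside a block-$i$ zero-respecting algorithm, is a more explicit version of the paper's one-line decoupling justification. One arithmetic slip: $\Dz^{(2)}L^{(2)}/(\eps^{(2)})^2 = \Dz L/\eps^2$, as you use correctly in your time-threshold paragraph, not $\kappa \Dz L/\eps^2$; hence $d \le \Dz L/\eps^2$ and $m=\kappa d \le \kappa \Dz L/\eps^2$ holds literally as stated, with no reinterpretation needed.
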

\begin{proof}
    Let $d \leq \Dz L \eps^{-2}$ be the dimension from \Cref{lem:eucl_lower_bound} and set $m \coloneqq \kappa d$. Furthermore let $F, \nabla f$ be the hard instance defined in \Cref{eq:embedded_hard_instance}. By \Cref{prop:satisfies_properties} we have $F \in \Fc_{\Dz, L}(m, \kappa)$ and $\nabla f \in \Oc_{p, \varsym}(m, \kappa)$.
    
    Now let $\Ac$ be a zero-respecting algorithm and denote the sequence generated by $\Ac$ on $F, \nabla f$ as $(X_t)_{t \in \Ngeq}$. Let $i \in [\kappa]$ and note that, by the independence of $\xi_1, \dots, \xi_\kappa$, the sequence $x^{(i)}_t \coloneqq P_i(X_t) \in \R^d$ corresponds to a randomised zero-respecting algorithm on $F^{(2)}, \nabla f^{(2)}$. In particular, we have
    \begin{equation*}
        \Exp{\norm{\nabla F^{(2)}\pare{x_t^{(i))}}}_2} > \eps^{(2)}
        \quad \text{for all} \quad t \leq c^2 \cdot \frac{\Dz^{(2)} L^{(2)}}{(\eps^{(2)})^2}\pare{\frac{c \varsym^{(2)}}{\eps^{(2)}}}^{\frac{p}{p-1}}.
    \end{equation*}
    with our choices $\Dz^{(2)} = \frac{\Dz}{\kappa}, L^{(2)} \coloneqq \frac L \kappa, \eps^{(2)} \coloneqq \frac \eps \kappa, \varsym^{(2)} \coloneqq \frac{\varsym}{\kappa^{1/p}}$ this is equivalent to
    \begin{equation*}        
        \Exp{\norm{\nabla F^{(2)}\pare{x_t^{(i))}} }_2} > \frac \eps \kappa
        \quad \text{for all} \quad t \leq c^2 \cdot \kappa \frac{\Dz L}{\eps^2}\pare{\frac{c \varsym}{\eps}}^{\frac{p}{p-1}}.
    \end{equation*}
    Finally, by \Cref{lem:embedding_properties} we have
    \begin{equation*}
        \Exp{\nucnorm{\nabla F(X_t)}}
        = \Exp{\nucnorm{\sum_{i = 1}^\kappa E_i\pare{\nabla F^{(2)}\pare{x_t^{(i))}}}}}
        = \sum_{i=1}^\kappa \Exp{\norm{\nabla F^{(2)}\pare{x_t^{(i))}}}_2}
        > \eps,
    \end{equation*}
    which finishes the proof.
\end{proof}

\Cref{thm:high_dim_lower_bound} is a direct corollary of the previous result, by noting that the case $m < n$ can be handled by considering the transposed version of the above proof, and unnecessary dimensions can be padded with zeros.

\section{Experimental Details}
\label{sec:app.experimental_details}
In this section we provide further details and extensions of our experiments in \Cref{sec:experiments}.

\paragraph{Further Details.} We follow the usual convention of training all 2-d parameters but the input- and output-embeddings with \muon, while all other parameters are trained with \textsc{AdamW}.

\paragraph{Computational Resources.} All experiments were conducted on an internal cluster, using 2 H100 GPUs. Overall the experiments required 606 GPU hours, out of which 311 were spent on preliminary experiments, 283 on the Schatten-r experiments, and an additional 12 GPU hours to calculate the gradient noise. 

\paragraph{Licenses.} The FineWeb-Edu dataset \citep{penedo2024fineweb} is distributed under the Open Data Commons Attribution License (ODC-By) v1.0. Our use complies with this license and adheres to CommonCrawl’s Terms of Use. 

\paragraph{Details on Figure \texorpdfstring{\ref{fig:noise_ratios_layers} and \ref{fig:embedding_plots}}{1 and 2}.}
We consider the checkpoint at initialisation and at the end of training for the optimal learning rate $\eta = 0.015$, and seed 0. For both checkpoints, we first calculate the true gradient $\nabla F(x_i), i \in \set{1, T}$ by going over the whole $1.4$B token training dataset $\{\xi_1, \dots, \xi_T\}$. In a second pass we then calculate, for each weight matrix $x^{(j)}$
\begin{align*}
    \sigma_{S_1, p}^p &= \frac 1 T \sum_{\tau = 1}^{T}
    \nucnorm{\partial_{x^{(j)}} F(x_i) - \partial_{x^{(j)}} f(x_i, \xi_\tau)}^p,\\
    \sigma_{F, p}^p &= \frac 1 T \sum_{\tau = 1}^{T}
    \fnorm{\partial_{x^{(j)}} F(x_i) - \partial_{x^{(j)}} f(x_i, \xi_\tau)}^p,
\end{align*}
where $\iterationxi \fin, \dots, \iterationxi \li$ are the mini-batches of size 512. We then calculate the ratio $\nicefrac{\sigma_{S_1, p}}{\sigma_{F, p}}$ for all weight matrices trained by \muon\ and report them in \Cref{fig:noise_ratios_layers}. We additionally visualise the plain $\sigma_{S_1, p}$ and $\sigma_{\ell_1, p}$ in \Cref{fig:plain_sigma}.

\begin{figure}
    \centering
    
    \begin{subfigure}[t]{0.48\textwidth}
        \centering
        \includegraphics[width=\textwidth]{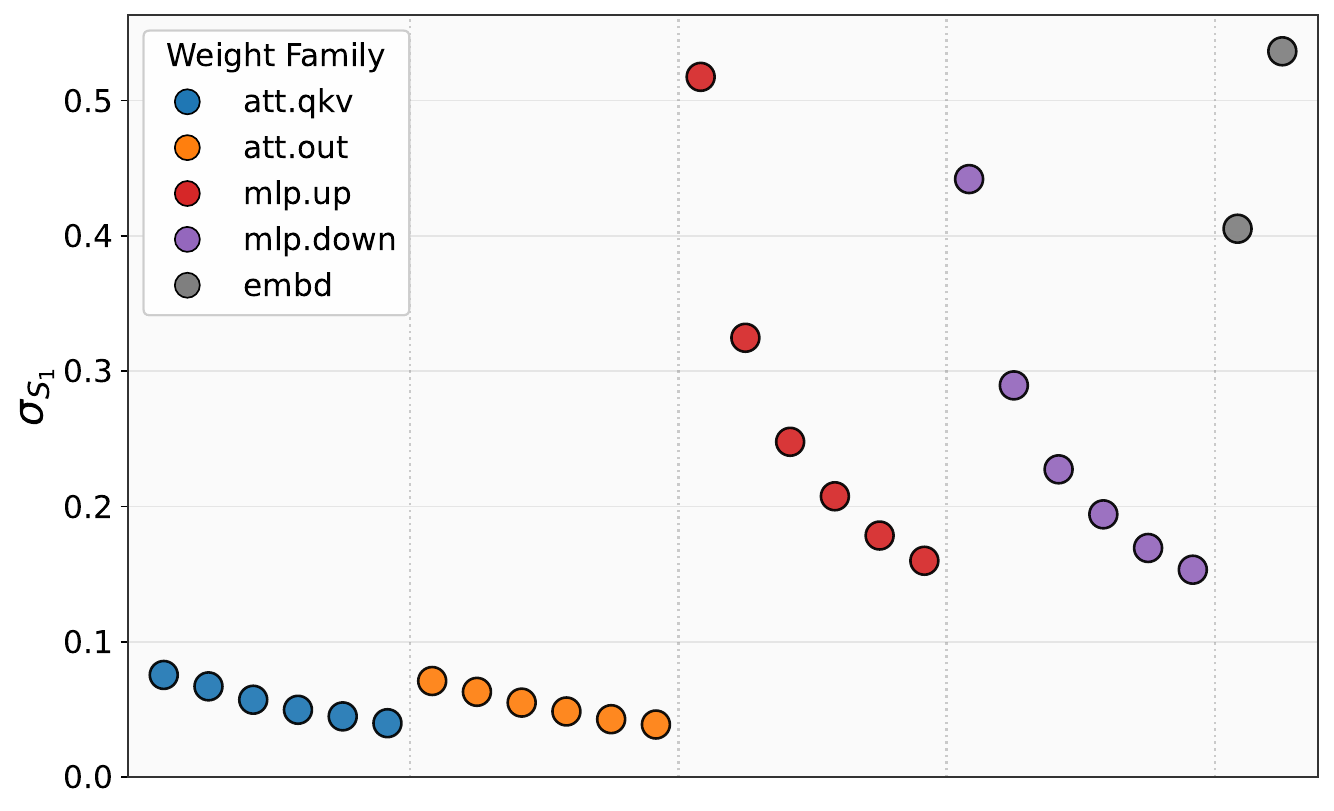}
        \caption{$\sigma_{S_1}$ at Init}
    \end{subfigure}
    \hfill
    \begin{subfigure}[t]{0.48\textwidth}
        \centering
        \includegraphics[width=\textwidth]{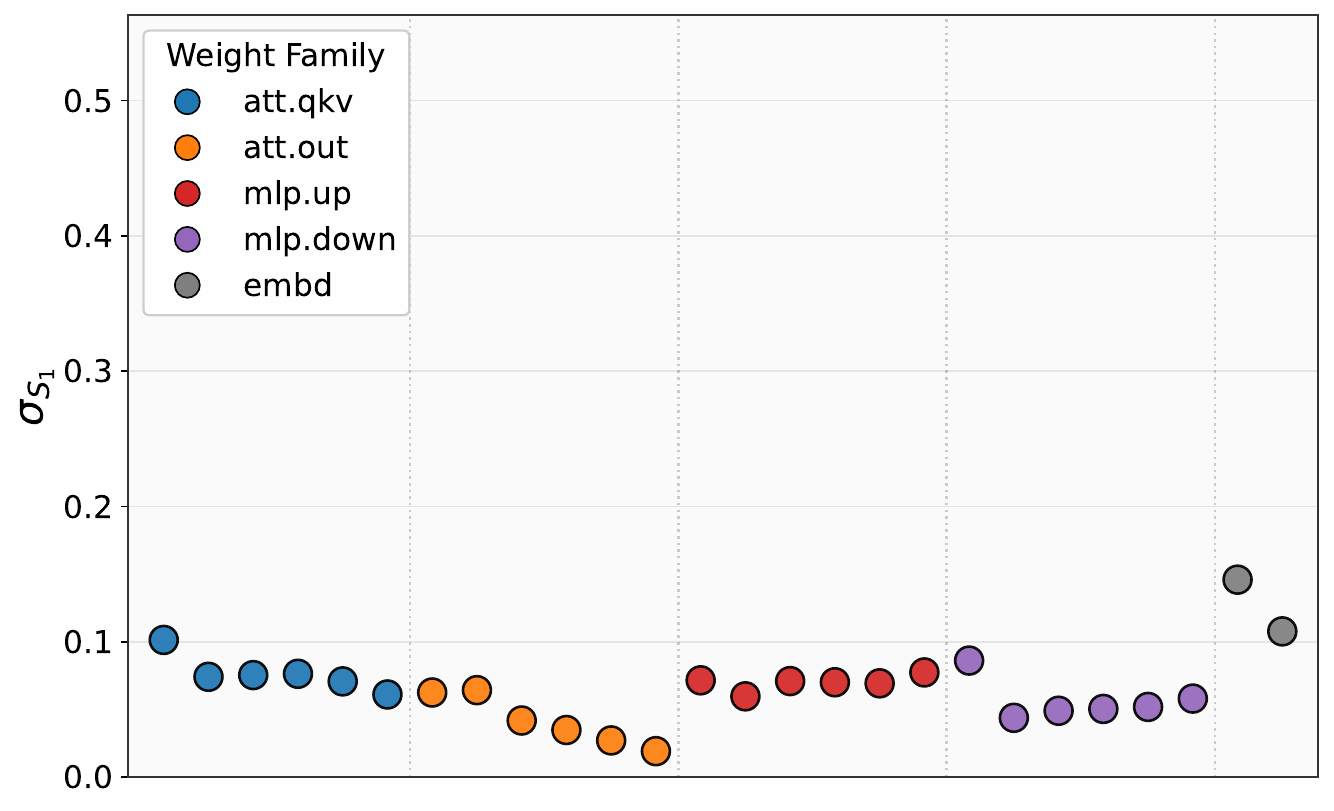}
        \caption{$\sigma_{S_1}$ at Final}
    \end{subfigure}
    
    \vspace{0.5em}

    \begin{subfigure}[t]{0.48\textwidth}
        \centering
        \includegraphics[width=\textwidth]{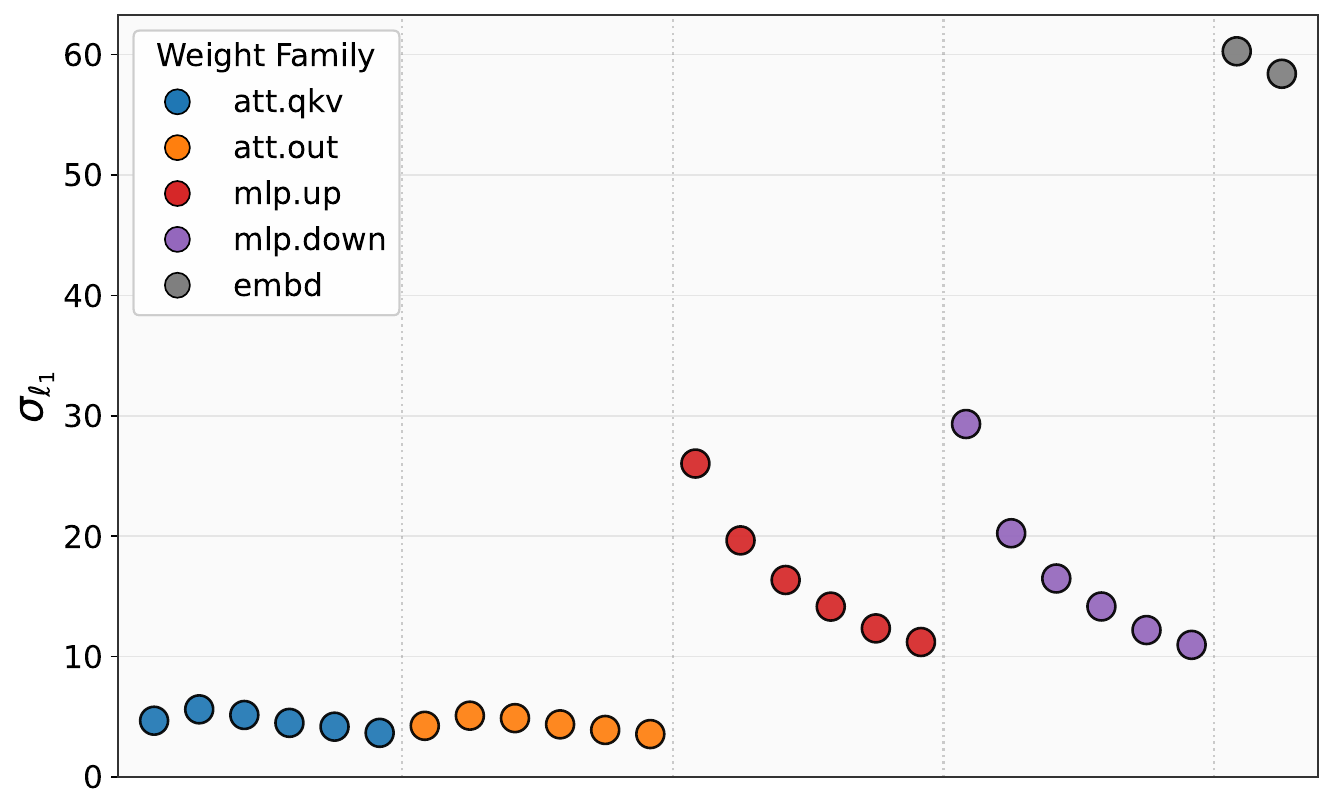}
        \caption{$\sigma_{\ell_1}$ at Init}
    \end{subfigure}
    \hfill
    \begin{subfigure}[t]{0.48\textwidth}
        \centering
        \includegraphics[width=\textwidth]{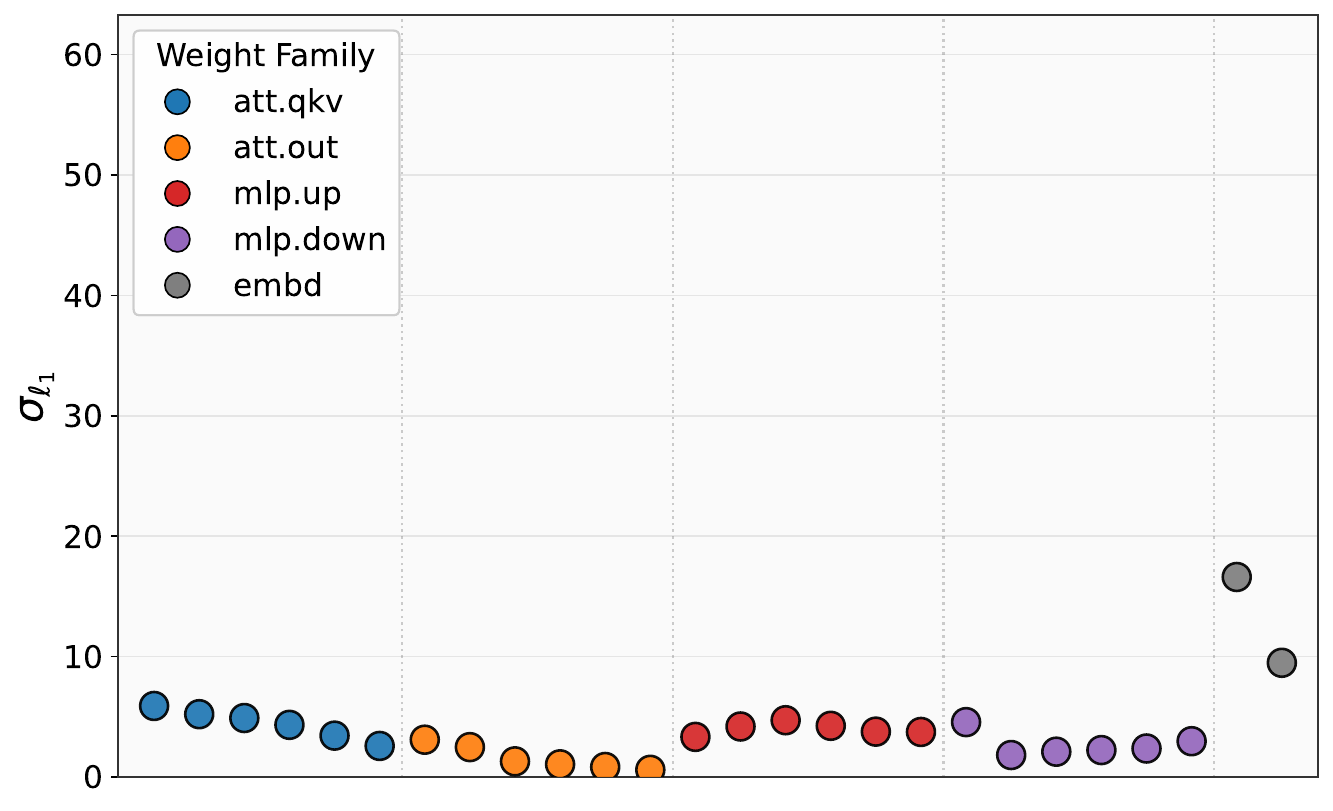}
        \caption{$\sigma_{\ell_1}$ at Final}
    \end{subfigure}

    \caption{Plain noise values.}
    \label{fig:plain_sigma}
\end{figure}

\paragraph{Details on \Cref{tab:schatten_comparison}.} To prevent confounding from the choice of learning rates, we additionally provide the full grid of results for the learning rate and Schatten-$r$ sweep in \Cref{fig:sweep_grid}. Importantly, none of the optimal learning rates lay on the end points of the grid.

\begin{figure}
    \centering
    \includegraphics[width=\textwidth]{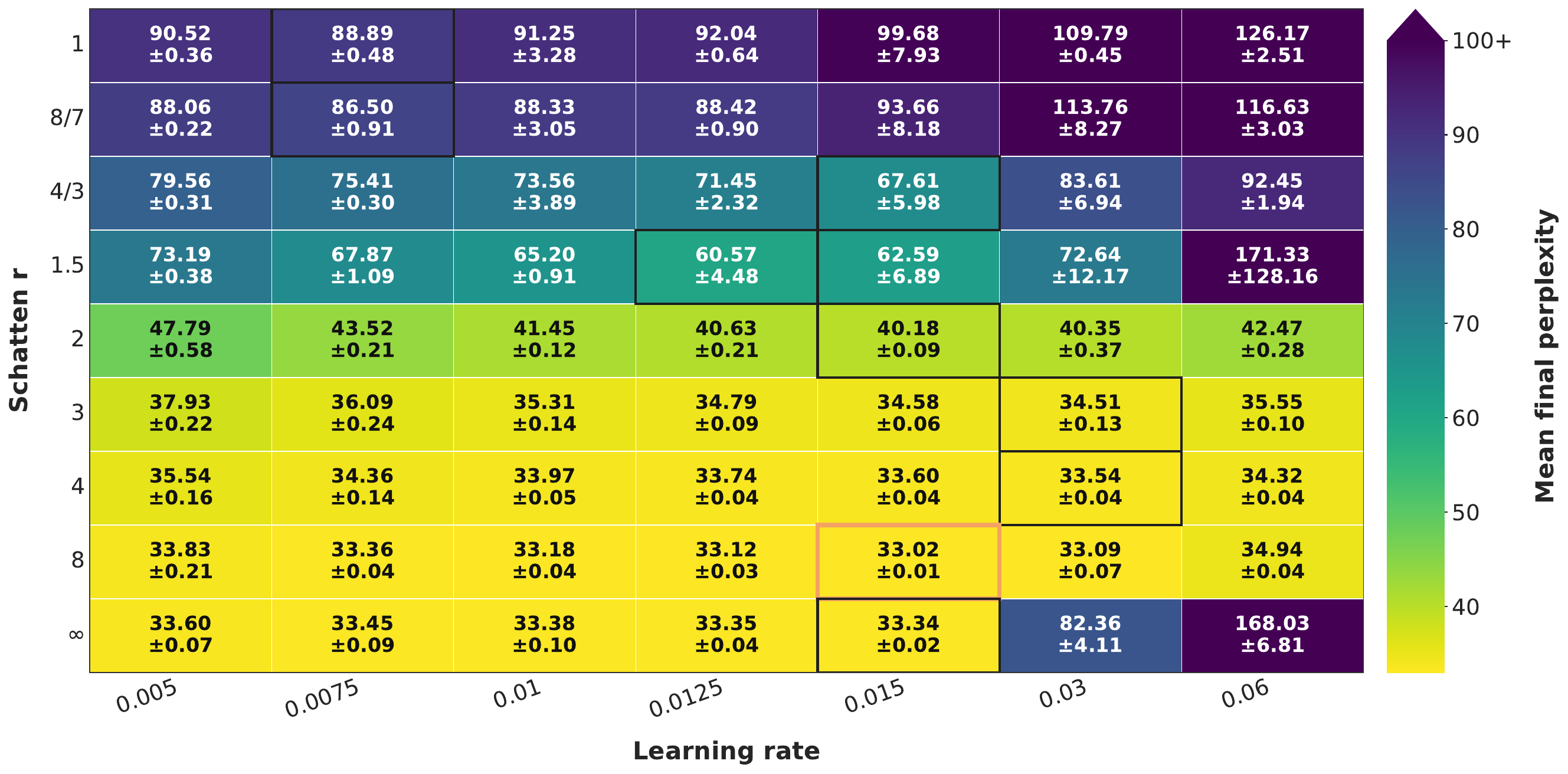}
    \caption{Full result grid for the Schatten-$r$ sweep. The value represents the mean final perplexity $\pm$ its standard deviation over the seeds $0, 1, 2$. Optimal learning rates per geometry are marked with black borders, the overall best result with an orange border.}
    \label{fig:sweep_grid}
\end{figure}

\clearpage

\end{document}